\documentclass[11pt,a4paper]{article}
\usepackage[full]{textcomp}
\usepackage{newtxtext} 
\usepackage[utf8]{inputenc}

\usepackage{graphicx,amsmath,amssymb} 
\usepackage{mathtools}
\usepackage{enumitem}
\setlist{topsep=1pt,parsep=1pt,itemsep=1pt} 
% slightly decrease enumerate/itemize separation
\usepackage{tikz}
\usetikzlibrary{matrix,arrows}
\usetikzlibrary{decorations.pathmorphing}
\usetikzlibrary{decorations.markings}
\usepgflibrary{fpu}
\usepackage{xcolor}
\usepackage{mathrsfs} % calligraphic fonts
\usepackage{bbm}

\usepackage{cleveref}
\usepackage{amsthm}  
\usepackage{autonum} %intelligent equation numbering

\usepackage{thmtools}

\input{sr_spde.sty}
\setcounter{tocdepth}{2}

\numberwithin{equation}{section}

%opening
\title{Concentration estimates for slowly time-dependent\\ 
singular SPDEs on the two-dimensional torus}
\author{Nils Berglund and Rita Nader}
\date{}

\begin{document}

\maketitle

\begin{abstract}
We consider slowly time-dependent singular stochastic partial differential 
equations on the two-dimensional torus, driven by weak space-time white noise, 
and renormalised in the Wick sense. Our main results are concentration results 
on sample paths near stable equilibrium branches of the equation without noise, 
measured in appropriate Besov and H\"older norms. We also discuss a case 
involving a pitchfork bifurcation. These results extend to the two-dimensional 
torus those obtained in~\cite{BG_pitchfork} for finite-dimensional SDEs, and 
in~\cite{Berglund_Nader_22} for SPDEs on the one-dimensional torus.
\end{abstract}

\leftline{\small{\it Date.\/} September 30, 2022. Updated October 22, 2022. 
}
\leftline{\small 2020 {\it Mathematical Subject Classification.\/} 
60H15, 		% Stochastic partial differential equations
60G17		%Sample path properties
(primary),
34F15,   	%Resonance phenomena for ordinary differential equations 
%involving randomness
37H20   	%Bifurcation theory for random and stochastic dynamical systems
(secondary)}
\noindent{\small{\it Keywords and phrases.\/}
Stochastic PDEs, 
% stochastic resonance, 
sample-path estimates,
slow-fast systems, 
renormalisation.
% transcritical bifurcation. 
}

%%%%%%%%%%%%%%%%%%%%%%%%%%%%%%%%%%%%%%%%%%%%%%%%%%%%%%%%%%%%%%%%%%%%%%%%%%%%%%

\section{Introduction}
\label{sec:intro} 

In this work, we are interested in slowly time-dependent singular 
stochastic partial differential equations (SPDEs) on the two-dimensional torus, 
of the form 
\begin{equation}
\label{eq:SPDE_intro} 
 \6\phi(t,x) = \bigbrak{\Delta\phi(t,x) + \Wick{F(\eps t, \phi(t,x))}} \6t 
 + \sigma \6W(t,x)\;,
\end{equation} 
where $\Wick{F}$ denotes Wick renormalisation (see below), and $\6W(t,x)$ 
denotes space-time white noise. While analogous SPDEs on the one-dimensional 
torus are well-posed, without the need for any renormalisation procedure, it is 
well known that renormalisation is required in dimension two and higher, 
because space-time white noise is a distribution-valued process that is too 
singular. 

The well-posedness problem on the two-dimensional torus was first solved 
by Giuseppe Da Prato and Arnaud Debussche in the landmark 
work~\cite{daPratoDebussche}. The main idea of their approach is to write an 
equation for the difference between the solution and the stochastic 
convolution, which solves a linear equation. It turns out that unlike the 
stochastic convolution, which is distribution-valued, the difference is an 
actual function. Solutions to the equation can then be constructed by a 
fixed-point argument in an appropriate Besov space, provided the equation is 
renormalised in the sense of Wick. While the method has been spelled out for 
time-independent systems, extending it to time-dependent equations of the 
form~\eqref{eq:SPDE_intro} is straightforward. 

The work~\cite{daPratoDebussche} has later given rise to far-reaching 
generalisations, that allow to solve large classes of singular SPDEs. These 
generalisations include the theory of regularity structures, introduced by 
Martin Hairer in the work~\cite{Hairer2014} and further developed with Ajay 
Chandra, Yvain Bruned, Ilya Chevyrev and Lorenzo Zambotti 
in~\cite{BrunedHairerZambotti,ChandraHairer16,
Bruned_Chandra_Chevyrev_Hairer_18}, and the theory of paracontrolled 
distributions, introduced in~\cite{Gubinelli_Imkeller_Perkowski_15} by 
Massimiliano Gubinelli, Peter Imkeller and Nicholas Perkowski. 
Most of these more general singular SPDEs require more refined renormalisation 
methods than Wick renormalisation. 

For time-independent versions of the equation~\eqref{eq:SPDE_intro} on the 
two-dimensional torus, many results going beyond well-posedness and 
existence/uniqueness of solutions have been obtained. For instance, the fact 
that their solutions satisfy the Markov property and are reversible with 
respect to the Gibbs measure was proved in~\cite{Rockner_Zhu_Zhu_15} using 
Dirichlet forms, while uniqueness of the Gibbs measure and convergence to it 
were obtained in~\cite{Rockner_Zhu_Zhu_17}. The fact that solutions satisfy the 
strong Feller property and are exponentially mixing was shown 
in~\cite{Tsatsoulis_Weber_16} using a dissipative bound, while the strong 
Feller property was also proved (for more general equations) 
in~\cite{Hairer_Mattingly_18}, using the theory of regularity structures.  
The work~\cite{HairerWeber} provided a large-deviation principle, valid for a 
class of two- and three-dimensional singular SPDEs. In the particular case of 
the Allen--Cahn equation, sharper asymptotics on transition times between 
metastable states than those provided by large-deviation estimates have been 
obtained in~\cite{Berglund_DiGesu_Weber_16} and~\cite{Tsatsoulis_Weber_18}. 

In the present work, we are interested in obtaining more detailed 
non-equilibrium properties for time-dependent SPDEs of the 
form~\eqref{eq:SPDE_intro} on the two-dimensional torus. The case of the 
one-dimensional torus has been previously considered in the 
work~\cite{Berglund_Nader_22}. The first main result of that work concerned the 
motion near so-called stable equilibrium branches of the equation. These are 
curves of the form $t\mapsto\phi^*(t,x)$ on which the right-hand side of the 
equation vanishes in the absence of noise. The deterministic equation admits 
particular solutions that stay at distance of order $\eps$, in the $H^1$ Sobolev 
norm, from $\phi^*$, and it was proved that solutions of the stochastic equation 
remain with high probability in a neighborhood of size of order $\sigma$, 
measured in the $H^s$ Sobolev norm for $s < \frac12$. This result provides an 
extension to the infinite-dimensional setting of similar results previously 
obtained in~\cite{berglund2002GeoPerSDE,Berglund_Gentz_book} for 
finite-dimensional stochastic differential equations. 

The other results in~\cite{Berglund_Nader_22} concerned certain situations 
involving bifurcations, or avoided bifurcations. These occur when the 
equilibrium branch $t\mapsto\phi^*(t,x)$ (almost) loses stability at some time, 
usually because of the presence of a nearby unstable equilibrium branch. This 
can result in interesting phenomena such as stochastic resonance, where 
solutions of the equation make fast jumps in a close-to-periodic way. Those 
results were an infinite-dimensional generalisation of one-dimensional results 
obtained in~\cite{BG_SR}.

The aim of the present work is to obtain similar results in the case of the 
two-dimensional torus, where Wick renormalisation is needed. The main result, 
Theorem~\ref{thm:stoch_convolution}, shows that Wick powers of the stochastic 
convolution remain concentrated near zero with high probability. Their size is 
measured here in the Besov norm $\smash{\normBesov{\cdot}{\alpha}{2}{\infty}}$ 
for any parameter $\alpha < 0$. Theorem~\ref{thm:phi1} shows that this estimate 
implies concentration properties of solutions in a neighborhood of a 
stable equilibrium branch $\set{\phi^*(t,x)}_{0\leqs t\leqs T}$. In particular, 
the difference between a solution of~\eqref{eq:SPDE_intro} and the stochastic 
convolution is likely to remain small, in a stronger H\"older norm of positive 
index. 

Despite this concentration result, one may be concerned that it is of little 
practical use, because it does only concern the difference between a solution 
and the more singular stochastic convolution. Theorems~\ref{thm:phi1perp} 
and~\ref{thm:phi10} show that this is not the case, by discussing the 
particular situation of a dynamic pitchfork bifurcation, which was previously 
considered in~\cite{BG_pitchfork} for one-dimensional stochastic differential 
equations. 

The remainder of this paper is organized as follows. Section~\ref{sec:results} 
contains a precise description of the set-up, a short introduction to Besov 
spaces, and the concentration results in the stable case, and in a case 
involving a pitchfork bifurcation. Section~\ref{sec:proof_stoch_convolution} 
contains the proof of Theorem~\ref{thm:stoch_convolution} on Wick powers of the 
stochastic convolution. Section~\ref{sec:proofs_concentration} contains the 
proofs of the other concentration results. Three appendices provide further 
information on Besov spaces, Wick calculus, and some technical proofs.

\subsection*{Acknowledgments}

The authors would like to thank Tom Klose for pointing out 
reference~\cite{Janson_book}. 
This work is supported by the ANR project PERISTOCH, ANR–19–CE40–0023.

%%%%%%%%%%%%%%%%%%%%%%%%%%%%%%%%%%%%%%%%%%%%%%%%%%%%%%%%%%%%%%%%%%%%%%%%%%%%%%

\section{Set-up and main results}
\label{sec:results} 

%%%%%%%%%%%%%%%%%%%%%%%%%%%%%%%%%%%%%%%%%%%%%%%%%%%%%%%%%%%%%%%%%%%%%%%%%%%%%%

\subsection{A family of Wick-renormalised singular SPDEs}
\label{ssec:Wick} 

We are interested in renormalised versions of the SPDE 
\begin{equation}
\label{eq:SPDE0} 
 \6\phi(t,x) = \bigbrak{\Delta\phi(t,x) + F(\eps t, \phi(t,x))} \6t 
 + \sigma \6W(t,x)\;,
\end{equation} 
where time $t$ belongs to an interval $I=[0,T]\subset\R_+$, the spatial variable 
$x$ belongs to the two-dimensional torus $\T^2 = (\R/\Z)^2$, and the solution 
$\phi(t,x)$ is real-valued. In addition, we assume that 
\begin{itemize}
\item 	$\eps > 0$ and $\sigma\geqs0$ are small positive parameters;
\item 	$F$ is polynomial, of the form 
\begin{equation}
\label{eq:def_F} 
 F(t,\phi) = \sum_{j=0}^n A_j(t) \phi^j 
\end{equation} 
for some odd $n\geqs3$, where the coefficients $A_j:I\to\R$ are of class 
$\cC^1$, and the leading coefficient $A_n(t)$ is strictly negative for all 
$t\in I$, to avoid blow-up of solutions;
\item 	$\6W(t,x)$ denotes space-time white noise on $I\times\T^2$. 
\end{itemize}

It is well-known (see for instance~\cite{daPratoDebussche}) that the 
SPDE~\eqref{eq:SPDE0} is not well-posed, and that a renormalisation procedure 
is required to define a notion of solution. There exist several slightly 
different ways of doing this. For our purposes, it will be convenient to work 
with spectral Galerkin approximations. Let 
\begin{equation}
 \bigset{e_k(x) = \e^{2\pi\icx k\cdot x}}_{k \in \Z^2}
\end{equation} 
denote a complex Fourier basis of $L^2(\T^2)$, and write any $\phi\in 
L^2(\T^2)$ as 
\begin{equation}
\label{eq:Fourier} 
 \phi(x) = \sum_{k\in\Z^2} \phi_k e_k(x)\;.
\end{equation} 
Note that since $\phi(x)$ is assumed to be real, the coefficients $\phi_k$ 
satisfy the reality condition 
\begin{equation}
 \label{eq:reality_cond}
 \phi_{-k} = \overline{\phi_k} \qquad \forall k\in\Z\;.
\end{equation} 
For any cut-off $N\in\N$, we define the spectral Galerkin approximation at 
order $N$ of $\phi$ by 
\begin{equation}
 \phi_N(x) = (P_N\phi)(x)
 := \sum_{k\in\Z^2 \colon \abs{k}\leqs N} \phi_k e_k(x)\;,
\end{equation} 
where $\abs{k} = \abs{k_1} + \abs{k_2}$. We denote the eigenvalues of the 
Laplacian on $\T^2$ by $-\mu_k$, with
\begin{equation}
\label{eq:ev_Laplacian} 
 \mu_k := (2\pi)^2 \norm{k}^2\;, \qquad k\in\Z^2\;,
\end{equation} 
where $\norm{k}$ denotes the Euclidean norm of $k$, and define the 
renormalisation constant 
\begin{equation}
\label{eq:def_CN} 
 C_N = \frac{\sigma^2}{2}\Tr\Bigpar{\brak{-P_N\Delta+1}^{-1}}
 = \sigma^2\sum_{k\in\Z^2 \colon \abs{k}\leqs N} \frac{1}{2(\mu_k + 1)}\;.
\end{equation} 
One easily checks that $C_N$ diverges like $\sigma^2\log N/(2\pi)$ as 
$N\to\infty$. Note that the shift $+1$ in the definition~\eqref{eq:def_CN} of 
$C_N$ is only there to avoid problems with the $k=0$ mode, and can be replaced 
by any other strictly positive constant. 

Recall that the Hermite polynomials with variance $C_N$ are defined recursively 
by 
\begin{equation}
 H_0(x; C_N) := 1\;, \qquad 
 H_{m+1}(x; C_N) := x H_m(x; C_N) - C_N \frac{\6}{\6x}H_m(x; C_N)
 \quad \forall m\in\N_0\;.
\end{equation} 
The $m$th Wick power of $\phi_N$ is defined by
\begin{equation}
 \Wick{\phi_N^m} 
 = \Wick{\phi_N^m}_{C_N}
 := H_m(\phi_N; C_N)\;.
\end{equation} 
For instance, we have 
\begin{align}
 \Wick{\phi_N(x)^1} &= \phi_N(x)\;, \\
 \Wick{\phi_N(x)^2} &= \phi_N(x)^2 - C_N\;, \\
 \Wick{\phi_N(x)^3} &= \phi_N(x)^3 - 3C_N\phi_N(x), \\
 \Wick{\phi_N(x)^4} &= \phi_N(x)^4 - 6C_N\phi_N(x)^2 + 3C_N^2\;.
\end{align} 
The renormalised version of~\eqref{eq:SPDE0} we want to study is given by the 
limit, as $N\to\infty$, of 
\begin{equation}
\label{eq:SPDE_renorm_N} 
 \6\phi_N(t,x) = \bigbrak{\Delta\phi_N(t,x) + \Wick{F(\eps t, 
\phi_N(t,x))}_{C_N}} \6t + \sigma \6W_N(t,x)\;,
\end{equation} 
where $\6W_N = P_N\6W$, and 
\begin{equation}
 \Wick{F(t,\phi)}_{C_N} 
 := \sum_{j=0}^n A_j(t) \Wick{\phi^j}_{C_N}\;. 
\end{equation} 
As proved in~\cite{daPratoDebussche}, solutions of the renormalised 
equation~\eqref{eq:SPDE_renorm_N} do admit a well-defined limit as 
$N\to\infty$, in appropriate Besov spaces that we define below. The limiting 
equation is denoted by 
\begin{equation}
\label{eq:SPDE_renorm0} 
 \6\phi(t,x) = \bigbrak{\Delta\phi(t,x) + \Wick{F(\eps t, 
\phi(t,x))}} \6t + \sigma \6W(t,x)\;. 
\end{equation} 
In what follows, it will be convenient to rescale time by a factor $\eps$, 
which results in the SPDE 
\begin{equation}
\label{eq:SPDE_renorm} 
 \6\phi(t,x) = \frac{1}{\eps}\bigbrak{\Delta\phi(t,x) + \Wick{F(t, 
\phi(t,x))}} \6t + \frac{\sigma}{\sqrt{\eps}} \6W(t,x)\;. 
\end{equation} 

%%%%%%%%%%%%%%%%%%%%%%%%%%%%%%%%%%%%%%%%%%%%%%%%%%%%%%%%%%%%%%%%%%%%%%%%%%%%%%

\subsection{Besov spaces and the \lq\lq Da Prato--Debussche trick\rq\rq}
\label{ssec:Besov} 

As mentioned above, the importance of Besov spaces in solving singular SPDEs of 
the form~\eqref{eq:SPDE_renorm} was  realised in the seminal 
work~\cite{daPratoDebussche}. We recall one of their definitions here.

\begin{definition}[Besov spaces]
\label{norm_besov}
Let $\phi$ admit the Fourier series~\eqref{eq:Fourier}.
We define a collection of annuli by setting $\cA_0 = \set{(0,0)}$
and $\cA_q = \setsuch{k\in\Z^2}{2^{q-1} \leqs \abs{k} < 2^q}$
for any $q\in\N$. The projection of $\phi$ on $\cA_q$ is defined by 
\begin{equation}
 \delta_q\phi(x) := \sum_{k\in\cA_q} \phi_k e_k(x)\;.
\end{equation} 
For $\alpha\in\R$ and $p, r\in[1,\infty]$, define the norm 
\begin{align}
 \normBesov{\phi}{\alpha}{p}{r} 
 &:= \bignorm{\bigset{2^{rq\alpha} 
\norm{\delta_q\phi}_{L^p}}_{q\geqs0}}_{\ell^r} \\
 &:= 
\begin{cases}
 \biggpar{\displaystyle
 \sum_{q\geqs0} 2^{rq\alpha} \norm{\delta_q\phi}_{L^p}}^{1/r}
 & \text{if $1\leqs r<\infty$\;,}\\[8pt]
 \displaystyle
 \sup_{q\geqs0} 2^{q\alpha} \norm{\delta_q\phi}_{L^p}
 & \text{if $r=\infty$\;.}
\end{cases}
\end{align} 
Then the Besov space $\Besovspace{\alpha}{p}{r} = 
\Besovspace{\alpha}{p}{r}(\T^2)$ is defined as the set of all $\phi$ such that 
$\normBesov{\phi}{\alpha}{p}{r} < \infty$. 
\end{definition}

The Besov space $\Besovspace{\alpha}{p}{r}$ is a Banach space for all 
$\alpha\in\R$ and $p, r\in[1,\infty]$. In particular, 
\begin{equation}
 \cC^\alpha := \Besovspace{\alpha}{\infty}{\infty}
 \qquad\text{and}\qquad 
 H^\alpha := \Besovspace{\alpha}{2}{2}
\end{equation} 
co\"incide with the usual H\"older and (fractional) Sobolev spaces 
respectively. 

We will use the following results, which can be found, for instance, 
in~\cite[Proposition~2.1]{daPratoDebussche}, in 
\cite[Lemma~3.3]{daPratoDebussche} and~\cite{Mourrat_Weber_17}.

\begin{prop}[Embeddings and products] \hfill
\label{prop:Besov} 
\begin{enumerate}
\item 	If $\alpha\in\R$, $1\leqs p_1\leqs p_2\leqs\infty$ and $1\leqs q_1\leqs 
q_2\leqs\infty$, then $\Besovspace{\alpha}{p_1}{q_1}$ is continuously embedded 
in $\Besovspace{\beta}{p_2}{q_2}$, where $\beta = \alpha - 
2\bigpar{\frac{1}{p_1} - \frac{1}{p_2}}$.

\item 	If $\alpha_1 < \alpha_2\in\R$ and $p, q\in[1,\infty]$, then 
$\Besovspace{\alpha_1}{p}{q}$ is compactly embedded 
in $\Besovspace{\alpha_2}{p}{q}$. 

\item 	Let $p, r \geqs1$ and let $\alpha, \beta < \frac2p$ satisfy 
$\alpha+\beta > 0$. Then, if $\phi\in\Besovspace{\alpha}{p}{r}$ and 
$\psi\in\Besovspace{\beta}{p}{r}$, one has
\begin{equation}
\label{eq:bound1_product_Besov} 
\phi \psi\in\Besovspace{\gamma}{p}{r}
\qquad\text{and}\qquad 
 \smallnormBesov{\phi\psi}{\gamma}{p}{r} 
 \lesssim \normBesov{\phi}{\alpha}{p}{r}
\normBesov{\psi}{\beta}{p}{r}\;,
\end{equation} 
where $\gamma = \alpha + \beta - \frac{2}{p}$.

\item 	Let $n\in\N$, $p, r\geqs1$ and $-\frac{2}{p(2n+1)} < \alpha < 0$. Set 
$s = \frac2p+2\alpha$. Then, if $\phi\in\Besovspace{s}{p}{r}$ and 
$\psi\in\Besovspace{\alpha}{p}{r}$, one has
\begin{equation}
\label{eq:bound2_product_Besov} 
\phi^\ell \psi\in\Besovspace{(2\ell+1)\alpha}{p}{r}
\qquad\text{and}\qquad 
 \smallnormBesov{\phi^\ell \psi}{(2\ell+1)\alpha}{p}{r} 
 \lesssim \normBesov{\phi}{s}{p}{r}^\ell 
\normBesov{\psi}{\alpha}{p}{r}
\end{equation} 
for $\ell\in\set{0,\dots,n-1}$, with a constant depending on $\alpha$, $s$, 
$p$, $r$ and $n$. 
\end{enumerate}
\end{prop}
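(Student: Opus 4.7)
The plan is to treat the four parts as standard Littlewood--Paley estimates, building up through (i) Bernstein's inequality and monotonicity of $\ell^r$ norms, (ii) a high/low frequency truncation argument for compactness, (iii) Bony's paraproduct decomposition for the bilinear estimate, and (iv) an iteration of (iii) for the nonlinear estimate. The basic technical input is that each block $\delta_q\phi$ has Fourier support in the annulus $\cA_q$ of scale $2^q$, so it behaves essentially like a band-limited function; all estimates then descend to a weighted $\ell^r$ bound over $q\in\N_0$.

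For part~1, I would apply Bernstein's inequality to each frequency block: for any function with Fourier support in $\cA_q$ and $1\leqs p_1\leqs p_2\leqs\infty$, one has $\smallnorm{\delta_q\phi}_{L^{p_2}} \lesssim 2^{2q(1/p_1-1/p_2)} \smallnorm{\delta_q\phi}_{L^{p_1}}$. Multiplying by $2^{q\beta}$ with $\beta=\alpha-2(1/p_1-1/p_2)$ converts this into $2^{q\beta}\smallnorm{\delta_q\phi}_{L^{p_2}}\lesssim 2^{q\alpha}\smallnorm{\delta_q\phi}_{L^{p_1}}$, and the $\ell^{q_1}\hookrightarrow\ell^{q_2}$ inclusion for $q_1\leqs q_2$ finishes the embedding. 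For part~2, I would split $\phi=P_{\leqs N}\phi+P_{>N}\phi$; on the finite-dimensional low-frequency part all norms are equivalent, yielding a precompact set by Bolzano--Weierstrass, while the strict inequality between the two regularity indices allows the high-frequency tail to be made uniformly small along a bounded sequence. A standard diagonal argument then extracts a convergent subsequence in the target Besov space.

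For part~3, I would use Bony's decomposition $\phi\psi=T_\phi\psi+T_\psi\phi+R(\phi,\psi)$, where $T_\phi\psi=\sum_q S_{q-1}\phi\cdot\delta_q\psi$ and $R(\phi,\psi)$ collects the diagonal terms $\sum_{|q-q'|\leqs 1}\delta_q\phi\,\delta_{q'}\psi$. The two paraproducts are handled by Hölder in $L^p$ together with Bernstein: $\smallnorm{S_{q-1}\phi}_{L^\infty}$ is controlled by $\normBesov{\phi}{\alpha}{p}{r}$ exactly when $\alpha<\frac2p$ (possibly with logarithmic slack absorbed by a geometric sum), giving a paraproduct in $\Besovspace{\beta}{p}{r}$, which is stronger than the claimed $\Besovspace{\alpha+\beta-2/p}{p}{r}$. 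The resonant term $R(\phi,\psi)$ is where the condition $\alpha+\beta>0$ is used: summing $2^{q(\alpha+\beta)}$ converges, and an additional Bernstein step costs the factor $2^{-2q/p}$ that lands us in $\Besovspace{\alpha+\beta-2/p}{p}{r}$.

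For part~4, the natural idea is to iterate part~3. Writing $s=\frac2p+2\alpha$ and applying part~3 to the product $\phi\cdot\phi$ yields, under the constraint $2s>0$ (i.e.\ $\alpha>-\frac1p$), that $\phi^2\in\Besovspace{2s-2/p}{p}{r}=\Besovspace{s+2\alpha}{p}{r}$. Continuing by induction, $\phi^k\in\Besovspace{s+2(k-1)\alpha}{p}{r}$ for $1\leqs k\leqs\ell$, and the final multiplication by $\psi\in\Besovspace{\alpha}{p}{r}$ produces the exponent $s+2(\ell-1)\alpha+\alpha-\frac2p=(2\ell+1)\alpha$ as required. The main obstacle, and the reason the quite restrictive hypothesis $\alpha>-\frac{2}{p(2n+1)}$ is imposed, is verifying that all intermediate constraints $\alpha_k+\alpha_j>0$ and $\alpha_k<\frac2p$ hold simultaneously for every admissible $\ell\leqs n-1$; the threshold $-\frac{2}{p(2n+1)}$ is precisely what keeps the most singular intermediate exponent $(2\ell+1)\alpha$ strictly above $-\frac2p$, so that the paraproduct and resonance bounds remain applicable at every step of the induction.
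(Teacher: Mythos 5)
The paper itself contains no proof of Proposition~\ref{prop:Besov}: it is quoted from \cite{daPratoDebussche} and \cite{Mourrat_Weber_17}. Your sketch can therefore only be measured against the standard Littlewood--Paley proofs, and in outline (two-dimensional Bernstein inequality for part~1, frequency truncation for part~2, Bony decomposition for part~3, iteration of part~3 for part~4) it follows exactly that standard route. Parts~1 and~4 are sound: in part~4 the bookkeeping $\phi^k\in\Besovspace{s+2(k-1)\alpha}{p}{r}$ is correct, and each intermediate application of part~3 is legitimate because the worst condition required, $\tfrac2p+(2\ell+1)\alpha>0$ for $\ell\leqs n-1$, is implied by $\alpha>-\tfrac{2}{p(2n+1)}$.

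There are, however, two genuine slips. In part~3 the paraproduct estimate is stated backwards: for $\phi\in\Besovspace{\alpha}{p}{r}$ with $\alpha<\tfrac2p$ one does \emph{not} have $\smallnorm{S_{q-1}\phi}_{L^\infty}\lesssim\normBesov{\phi}{\alpha}{p}{r}$ uniformly in $q$ --- uniform control of $S_{q-1}\phi$ in $L^\infty$ would require $\alpha>\tfrac2p$, i.e.\ $\phi$ bounded. What Bernstein actually gives is $\smallnorm{S_{q-1}\phi}_{L^\infty}\lesssim 2^{q(2/p-\alpha)}\normBesov{\phi}{\alpha}{p}{\infty}$, the geometric sum over $q'<q$ being dominated by its top term; consequently $T_\phi\psi$ lands exactly in $\Besovspace{\alpha+\beta-2/p}{p}{r}$, and \emph{not} in the stronger space $\Besovspace{\beta}{p}{r}$ as you assert. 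The final conclusion of part~3 is unaffected, since the resonant term limits the regularity to $\alpha+\beta-\tfrac2p$ in any case, but the step as written is false and must be repaired, all the more so because part~4 rests on repeated use of it. Second, in part~2 your high-frequency tail bound, $\sup_{q>N}2^{q\alpha_{\mathrm{low}}}\norm{\delta_q\phi}_{L^p}\leqs 2^{-N(\alpha_{\mathrm{high}}-\alpha_{\mathrm{low}})}$ times the norm in the source space, only works when the \emph{source} space carries the larger regularity index; your argument thus proves the standard (and correct) statement that $\Besovspace{\alpha_2}{p}{q}$ is compactly embedded in $\Besovspace{\alpha_1}{p}{q}$ when $\alpha_1<\alpha_2$, which is the transpose of the statement as literally written in the proposition. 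You should make the direction of the embedding explicit rather than glossing over it, since as printed (lower index embedded into higher) the assertion is not even a continuous embedding.
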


Let $\psi$ denote the stochastic convolution, that is, the solution of the 
linear equation 
\begin{equation}
 \6\psi(t,x) = \Delta\psi(t,x) \6t + \sigma \6W(t,x)
\end{equation} 
with initial condition $\psi(0,x) = 0$. It is known (see, for instance, 
\cite[Lemma~3.2]{daPratoDebussche}) that the stochastic convolution belongs to 
all Besov spaces with \emph{negative} regularity $\alpha$, but not with 
positive $\alpha$. This means that $\psi$ is a distribution, but not a 
function. The central idea in~\cite{daPratoDebussche} is that the difference 
$\phi_1 = \phi - \psi$ enjoys much better regularity properties:

\begin{theorem}[{\protect\cite[Theorem~4.2]{daPratoDebussche}}]
\label{thm:daPratoDebussche} 
For any $p>n$ and $r\geqs1$, let $\alpha$ and $s$ satisfy 
\begin{equation}
 0 > \alpha > \max\biggset{-\frac{2}{p(n+1)}, 
-\frac{1}{n-1}\biggpar{1-\frac{n}{p}}}\;, \qquad 
 s = \frac{2}{p} + 2\alpha\;.
\end{equation} 
Then, for almost any initial condition (with respect to a natural probability 
measure), the renormalised SPDE admits for any $T\geqs0$ a 
unique solution $\phi$ such that 
\begin{equation}
 \phi - \psi \in \cC([0,T],\Besovspace{\alpha}{p}{r}) 
 \cap L^p([0,T], \Besovspace{s}{p}{r})\;.
\end{equation} 
\end{theorem}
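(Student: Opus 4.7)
The plan is to follow the Da Prato--Debussche ``trick'' mentioned in Section~\ref{ssec:Besov}: write $\phi = \psi + \phi_1$, where $\psi$ is the stochastic convolution, and derive a PDE for the remainder $\phi_1$, which is expected to live in a space of positive regularity. Inserting the decomposition into~\eqref{eq:SPDE_renorm} and using the Wick binomial identity
\begin{equation}
\Wick{(\psi+\phi_1)^j} = \sum_{\ell=0}^j \binom{j}{\ell}\phi_1^{j-\ell}\Wick{\psi^\ell}\;,
\end{equation}
valid because $\phi_1$ is a function-valued multiplier Wick-independent of $\psi$, the equation for $\phi_1$ becomes
\begin{equation}
\partial_t\phi_1 = \Delta\phi_1 + \sum_{j=0}^n A_j(\eps t)\sum_{\ell=0}^j \binom{j}{\ell}\phi_1^{j-\ell}\Wick{\psi^\ell}\;,
\end{equation}
with the renormalisation constants $C_N$ cancelling in the limit $N\to\infty$.

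The probabilistic input I would rely on is that, for any $\alpha<0$ and any $p,r\in[1,\infty]$, the Wick powers $\Wick{\psi^\ell}$ of the stochastic convolution belong almost surely to $\cC([0,T],\Besovspace{\alpha}{p}{r})$ for each $\ell\in\{1,\dots,n\}$, as limits of $\Wick{\psi_N^\ell}$. This is obtained by computing $L^2$-moments of the dyadic blocks $\delta_q\Wick{\psi_N^\ell}(t,x)$ using the isometry property of Hermite polynomials in Gaussian Hilbert space, upgrading to $L^q$-bounds via Wiener-chaos hypercontractivity, and applying a Kolmogorov-type criterion; this is essentially the content of Theorem~\ref{thm:stoch_convolution}.

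The deterministic core is then a Picard iteration for the mild formulation
\begin{equation}
\phi_1(t) = e^{t\Delta}\phi_1(0) + \int_0^t e^{(t-s)\Delta}\sum_{j=0}^n A_j(\eps s)\sum_{\ell=0}^j \binom{j}{\ell}\phi_1(s)^{j-\ell}\Wick{\psi^\ell(s)}\,\6s
\end{equation}
in the Banach space $X_T = \cC([0,T],\Besovspace{\alpha}{p}{r})\cap L^p([0,T],\Besovspace{s}{p}{r})$. The two key deterministic tools are the product estimate~\eqref{eq:bound2_product_Besov}, which places each summand $\phi_1^{j-\ell}\Wick{\psi^\ell}$ in $\Besovspace{(2(j-\ell)+1)\alpha}{p}{r}$ provided $\alpha>-2/(p(2n+1))$, and the heat-semigroup smoothing $\normBesov{e^{t\Delta}f}{s}{p}{r}\lesssim t^{(\beta-s)/2}\normBesov{f}{\beta}{p}{r}$. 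The two lower bounds on $\alpha$ in the hypothesis are exactly calibrated so that the product estimate applies to every summand and the resulting negative Besov index, after one heat smoothing, is still integrable in $L^p_t\Besovspace{s}{p}{r}$.

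The main obstacle I anticipate is passing from the local-in-time solution produced by Picard iteration to the global solution on $[0,T]$ asserted in the theorem. Because the nonlinearity has odd degree $n\geqs3$, the contraction only yields a solution on a small random interval. Globalisation should come from an a priori estimate exploiting the dissipativity $A_n(t)<0$: testing against suitable powers of $\phi_1$ and integrating by parts produces a coercive term that dominates the source contributions from the $\Wick{\psi^\ell}$, giving a bound in $X_T$ ruling out finite-time blow-up. Uniqueness in the class above follows from a Gronwall-type estimate on the difference of two solutions, again using~\eqref{eq:bound2_product_Besov}.
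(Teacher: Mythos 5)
The local-in-time part of your sketch is exactly the route this paper points to: the theorem is quoted from \cite{daPratoDebussche}, whose proof is indeed the decomposition $\phi=\psi+\phi_1$, the Hermite binomial identity (Lemma~\ref{lem:hermite_binomial} with $C_2=0$, so that all the renormalisation is carried by $\Wick{\psi_N^\ell}$), the product estimate of Proposition~\ref{prop:Besov} together with heat-kernel smoothing, and a fixed point in $\cC([0,T_0],\Besovspace{\alpha}{p}{r})\cap L^p([0,T_0],\Besovspace{s}{p}{r})$ on a small interval; the probabilistic input on the Wick powers is \cite[Lemma~3.2]{daPratoDebussche} (or, for $p=2$, $r=\infty$, Theorem~\ref{thm:stoch_convolution} plus embeddings). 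Up to the local theory your outline is sound.

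The genuine gap is the globalisation, which is precisely where the clause \lq\lq for almost any initial condition (with respect to a natural probability measure)\rq\rq\ enters. In the cited proof this step is \emph{not} obtained from a dissipativity estimate: Da Prato and Debussche extend the local solutions to $[0,T]$ by exploiting the invariance of the Gibbs measure for the Galerkin approximations together with a Bourgain-type argument, which controls the solution along a sequence of deterministic times for almost every initial datum distributed according to that measure -- this is exactly why the statement only holds for almost every initial condition. Your proposed alternative, \lq\lq testing against suitable powers of $\phi_1$ and integrating by parts\rq\rq, is not a routine step in this setting: the source terms $\phi_1^{j-\ell}\Wick{\psi^\ell}$ are genuine distributions of negative regularity, so their pairing with powers of $\phi_1$ must be handled by Besov duality and interpolation and then absorbed into the coercive term; this can be done (see \cite{Mourrat_Weber_17,Tsatsoulis_Weber_16}), but it is a substantial argument in its own right, not a consequence of the estimates you quoted, and if completed it would yield global well-posedness for \emph{every} admissible initial condition, i.e.\ a different (stronger) theorem than the one being proved, making the \lq\lq almost every\rq\rq\ restriction unexplained. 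As written, your proof is therefore incomplete at the passage from the local solution on a small random interval to the solution on all of $[0,T]$, and it does not account for the role of the reference measure in the statement.
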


Note in particular that $s > 0$, implying that the difference $\phi - \psi$ 
takes values in the space of functions $\Besovspace{s}{p}{r}$, which have some 
H\"older regularity in space.  

%%%%%%%%%%%%%%%%%%%%%%%%%%%%%%%%%%%%%%%%%%%%%%%%%%%%%%%%%%%%%%%%%%%%%%%%%%%%%%

\subsection{Main results: Wick powers of the stochastic convolution}
\label{ssec:stoch_convol} 

Our first main results concern the stochastic convolution and its Wick powers. 
Let $a:I\to \R$ be a continuously differentiable function satisfying 
\begin{equation}
\label{eq:a(t)} 
 -a_+ < a(t) < -a_- \qquad \forall t\in I
\end{equation} 
for some constants $a_+ > a_- > 0$. The (time-inhomogeneous) stochastic 
convolution is defined as the solution of the linear equation
\begin{equation}
\label{eq:stoch_convolution} 
 \6\psi(t,x) = \frac{1}{\eps}\bigbrak{\Delta\psi(t,x) + 
 a(t) \psi(t,x)} \6t + \frac{\sigma}{\sqrt{\eps}} \6W(t,x)
\end{equation} 
with initial condition $\psi(0,x) = 0$ $\forall x\in\T^2$. The following 
estimate is the main result of this section. 

\begin{theorem}[Tail estimates on Wick powers of the stochastic convolution]
\label{thm:stoch_convolution} 
For any $\alpha < 0$ and for any $m\in\N$, there exist constants 
$C_m(T,\eps,\alpha)$ and $\kappa_m(\alpha)$, independent of the cut-off $N$, 
such that 
\begin{equation}
\label{eq:psi_tail_estimate}
\biggprob{\sup_{0\leqs t\leqs T} 
\normBesov{\Wick{\psi(t,\cdot)^m}}{\alpha}{2}{\infty} > h^m}
\leqs C_m(T,\eps,\alpha) \e^{-\kappa_m(\alpha) h^2/\sigma^2}
\end{equation} 
holds for all $h>0$. Furthermore, there are constants $c_0, c_1$, uniform in 
$m$, $\alpha$, $T$ and $\eps$, such that 
\begin{equation}
 \kappa_m(\alpha) \geqs c_0 \frac{\alpha^2}{m^7}\;, \qquad 
 C_m(T,\eps,\alpha) \leqs c_1 \frac{T}{\eps} 
\frac{m^{3/2}\e^mm^m}{\abs{\alpha}}\;.
\end{equation} 
\end{theorem}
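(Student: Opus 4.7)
The plan is to work with the spectral Galerkin approximation $\psi_N$ of $\psi$, derive moment bounds on $\normBesov{\Wick{\psi_N^m(t,\cdot)}}{\alpha}{2}{\infty}$ uniform in the cut-off $N$, and pass to the limit via Fatou's lemma. Each Fourier mode $\psi_k(t)$ is a complex Ornstein--Uhlenbeck process with drift $-\mu_k+a(t)\leqs-a_-$, so its instantaneous variance satisfies $v_k(t):=\E[\abs{\psi_k(t)}^2]\lesssim\sigma^2(1+\norm{k}^2)^{-1}$ uniformly in $t\in I$. The structural fact driving the estimate is that $\Wick{\psi_N^m(t,x)}$, renormalised with respect to $C_N(t)=\sum_{\abs{k}\leqs N}v_k(t)$ (which differs from the reference constant $C_N$ by a bounded amount), lies pointwise in $x$ in the $m$th Wiener chaos of the driving noise, which unlocks Nelson's hypercontractivity.

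For fixed $t$, I first estimate each Littlewood--Paley block separately. Translation invariance gives $\bigexpec{\norm{\delta_q\Wick{\psi_N^m(t,\cdot)}}_{L^2}^2}=\sum_{k\in\cA_q}\bigexpec{\abs{[\Wick{\psi_N^m(t,\cdot)}]_k}^2}=:V_q(t)$, and the Isserlis/Wick formula identifies each Fourier variance with $m!\sum_{k_1+\dots+k_m=k}\prod_iv_{k_i}(t)$. A standard two-dimensional convolution bound then yields $V_q(t)\lesssim m!\,\sigma^{2m}q^{m-1}$, uniformly in $N$ and $t$. Jensen's inequality in $x$ on the unit-volume torus, followed by Nelson's hypercontractivity applied pointwise in $x$, produces
\begin{equation*}
 \bigexpec{\norm{\delta_q\Wick{\psi_N^m(t,\cdot)}}_{L^2}^{2p}}
 \leqs (2p-1)^{mp}V_q(t)^p
 \qquad\forall p\geqs1\;.
\end{equation*}
Combining this with the trivial inequality $\sup_q a_q^{2p}\leqs\sum_q a_q^{2p}$ (summing against the weight $2^{2pq\alpha}$, which converges since $\alpha<0$, giving a factor of order $\abs{\alpha}^{-1}$ up to polynomial-in-$m$ corrections from the $q^{p(m-1)}$ growth) and applying Markov's inequality at level $h^m$ with an optimal choice of $p$ produces the Gaussian tail $\e^{-\kappa_m(\alpha)h^2/\sigma^2}$ at fixed $t$. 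The exponent $h^2$ (rather than $h^{2/m}$) arises because the characteristic tail of an element of the $m$th chaos is of form $\e^{-cu^{2/m}}$; evaluating at $u=h^m$ gives $\e^{-ch^2}$.

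To upgrade to the supremum over $[0,T]$, I would apply a Kolmogorov-type chaining argument. The OU representation shows that $\psi_k(t)-\psi_k(s)$ has variance bounded by $\sigma^2\abs{t-s}^\delta(1+\norm{k}^2)^{-1+\delta}\eps^{-\delta}$ for any $\delta\in(0,1)$; repeating the Wick-variance computation on $\Wick{\psi_N^m(t,\cdot)}-\Wick{\psi_N^m(s,\cdot)}$ with hypercontractivity yields an analogous H\"older-type moment bound, which Kolmogorov's continuity theorem converts into the sup tail, the prefactor $T/\eps$ coming from covering the rescaled interval by a grid of mesh of order $\eps$. All constants are uniform in $N$, so Fatou's lemma delivers the statement for the limiting $\Wick{\psi^m}$. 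The main obstacle is quantitative bookkeeping: recovering the claimed $\kappa_m\geqs c_0\alpha^2/m^7$ and $C_m\leqs c_1T\eps^{-1}m^{3/2}\e^mm^m/\abs{\alpha}$ requires carefully tracking Stirling's $m!\sim\sqrt{m}(m/\e)^m$, the $(2p-1)^{mp}$ loss in hypercontractivity, the polynomial-in-$m$ blow-up of the dyadic and convolution sums, and the $(2p)^{1/(2p)}$ price of the $\ell^\infty$-to-$\ell^{2p}$ passage, all before optimising $p$.
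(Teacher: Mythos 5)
Your route is essentially correct, but it is genuinely different from the one in the paper. The paper never takes fixed-time moments and never discretises time by hand in the way you do: it expands $H_m(\psi;C_N)$ by the multinomial formula over Littlewood--Paley blocks (Lemma~\ref{binomial_formula}), partitions $[0,T]$ into intervals whose length depends on the highest active frequency $2^{\q}$, reweights each Fourier mode exponentially so that $\hat\psi_k(t)=\e^{\alpha_k(u_{l+1},t)/\eps}\psi_k(t)$ is a martingale, uses the fact that Hermite polynomials of these martingales in their own variance are again martingales (Lemma~\ref{lem:Hermite_martingale}), and then controls the time supremum on each subinterval by Doob's submartingale inequality applied to the convex function $x\mapsto\e^{\gamma x^{1/m}}$, with the terminal exponential moment bounded via hypercontractivity (Proposition~\ref{bound_expec}); a deterministic comparison between $\psi$ and $\hat\psi$ (Proposition~\ref{prop:phi_hatphi}) and three layers of summation (over the partition, over multi-indices, over $q_0$) finish the job. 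Your scheme replaces all of this by fixed-time hypercontractive moment bounds on each block (your Jensen-in-$x$ plus Nelson step and the convolution bound $V_q\lesssim m!\,\sigma^{2m}q^{m-1}$ are both sound), Markov with an optimised moment order $p$, and a mesh-$\eps$ discretisation plus chaining for the supremum. What your approach buys is modularity: no martingale reweighting, no multinomial bookkeeping over $\hbn$, no $\psi$-versus-$\hat\psi$ comparison. What the paper's approach buys is that Doob handles the continuous-time supremum exactly, with constants whose $m$- and $\alpha$-dependence can be traced through a single exponential-moment estimate.

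Two caveats on your supremum step, which is the only place where your write-up is thin. First, "Kolmogorov's continuity theorem" in its usual form yields moment bounds on the modulus of continuity, not Gaussian tails; to get $\e^{-\kappa_m h^2/\sigma^2}$ you must run the chaining quantitatively, tracking the $(2p-1)^{mp}$ growth of increment moments through the dyadic sum and optimising in $p$ at the end (this works precisely because every object sits in the fixed $m$th chaos, so the sup inherits $\psi_{2/m}$-type integrability), or equivalently chain directly in the $\psi_{2/m}$ Orlicz norm. Second, the order of operations matters: the union bound over the $\sim T/\eps$ subintervals must come \emph{before} the chaining, so that the entropy integral runs over an interval of length $\eps$ only and the chaining diameter is $O(\sigma^m)$; chaining over all of $[0,T]$ at once would put a power of $T/\eps$ inside the exponent rather than in the prefactor $C_m(T,\eps,\alpha)$, which is not what the theorem asserts. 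With those points made explicit, and with the deferred bookkeeping (Stirling, the $\abs{\alpha}$-dependence of the dyadic sum, the small loss $2^{2\delta q}$ from the time-increment bound, which forces $\delta\lesssim\abs{\alpha}$ and is another source of $\alpha$-dependence in $\kappa_m$), your argument should deliver the stated bounds; indeed your fixed-time exponent is of order $\abs{\alpha}/m$, leaving ample room above the required $c_0\alpha^2/m^7$. Your handling of the renormalisation constant and of general $a(t)$ via the uniform bound $v_k(t)\lesssim\sigma^2(1+\norm{k}^2)^{-1}$ is consistent with what the paper does in its final subsection.
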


\begin{remark}
Comparable results cannot be expected to hold in any Besov space 
$\Besovspace{\alpha}{p}{\infty}$. For instance, we have 
\begin{equation}
 \bigexpec{\normBesov{\psi(t,\cdot)}{\alpha}{\infty}{\infty}}
 = \sup_{q \geqs 0} 2^{q\alpha} \sum_{k\in\cA_q} 
 \bigexpec{\abs{\psi_k(t)}}\;.
\end{equation} 
Since the random variables $\psi_k(t)$ follow centred normal distributions of 
variance of order $\norm{k}^{-2}$, the sum over $k\in\cA_q$ of the expectations 
of $\abs{\psi_k(t)}$ has order $2^q$. Therefore, the expectation of 
$\normBesov{\psi(t,\cdot)}{\alpha}{\infty}{\infty}$ diverges with the cut-off 
$N$ as $N^{\alpha+1}$ if $\alpha > -1$. Since the limiting random variable does 
not admit a first moment, its tail probabilities have to decay more slowly than 
$1/h$. 
\end{remark}

\begin{remark}
The following observation may provide some intuition on what it means for a 
distribution to be concentrated in a ball in the Besov space 
$\Besovspace{\alpha}{2}{\infty}$. Let $\eta:\T^2\to\R$ be a compactly supported 
test function of class $\cC^1$, of unit $\cC^1$-norm, and set 
\begin{equation}
 \eta_\rho(x) = \frac{1}{\rho} \eta\biggpar{\frac{x}{\rho}}
\end{equation} 
for any $\rho\in(0,1]$. Note that the scaled test functions $\eta_\rho$ have 
constant $L^2$-norm, instead of constant $L^1$-norm, as one would require when 
working with $\Besovspace{\alpha}{\infty}{\infty}$. Then we have 
\begin{equation}
 \bigabs{\pscal{\Wick{\psi^m}}{\eta_{2^{-q_0}}}}
 \lesssim 2^{\abs{\alpha}q_0} \normBesov{\Wick{\psi^m}}{\alpha}{2}{\infty}\;,
\end{equation} 
for all $q_0\in\N_0$ (cf.~Lemma~\ref{lem:Besov_test}), so that 
Theorem~\ref{thm:stoch_convolution} implies 
\begin{equation}
 \biggprob{\sup_{0\leqs t\leqs T} 
\pscal{\Wick{\psi(t)^m}}{\eta_{2^{-q_0}}} > h^m}
\leqs C_m(T,\eps) \exp\biggset{-\kappa_m(\alpha) 
2^{-2\abs{\alpha}q_0/m}\frac{h^2}{\sigma^2}}
\end{equation} 
for any $m\in\N$ and any $q_0\in\N_0$. This shows that sample paths of 
$\pscal{\Wick{\psi(t)^m}}{\eta_{2^{-q_0}}}$ are concentrated in a strip of 
width $\sigma^m 2^{\abs{\alpha}q_0}\abs{\alpha}^{-m}$. The same holds of course 
for $\eta_\rho(x-x_0)$, for any $x_0\in\T^2$. 
\end{remark}

%%%%%%%%%%%%%%%%%%%%%%%%%%%%%%%%%%%%%%%%%%%%%%%%%%%%%%%%%%%%%%%%%%%%%%%%%%%%%%

\subsection{Main results: concentration around stable equilibrium branches}
\label{ssec:stable} 

The main part of our results concern the effect of weak space-time white noise 
on the dynamics near a stable equilibrium branch of the unperturbed equation. 

\begin{assumption}[Stable case]
\label{assump:stable} 
There exists a map $\phi^*:I\to\R$ such that 
\begin{equation}
 F(t,\phi^*(t)) = 0 \; \forall t\in I\;.
\end{equation} 
Furthermore, the linearisation $a(t) = \partial_\phi F(t,\phi^*(t))$ 
satisfies~\eqref{eq:a(t)}. 
\end{assumption}

In the deterministic case $\sigma = 0$, the SPDE~\eqref{eq:SPDE_renorm} reduces 
to 
\begin{equation}
\label{eq:SPDE_det} 
 \6\phi(t,x) = \frac{1}{\eps}\bigbrak{\Delta\phi(t,x) + F(t, \phi(t,x))} \6t\;, 
\end{equation} 
since the renormalisation counterterm $C_N$ vanishes for $\sigma = 0$. 
We then have the following generalisation of Tihonov's theorem 
(cf.~\cite{Tihonov}).

\begin{prop}[Deterministic case]
\label{prop:stable_det} 
There exist constants $\eps_0, C>0$ such that, when $0<\eps<\eps_0$, 
\eqref{eq:SPDE_det} admits a particular solution $\bar\phi(t)$ satisfying   
\begin{equation}
 \norm{\bar\phi(t,\cdot) - \phi^*(t)e_0}_{H^1} \leqs C\eps 
 \qquad \forall t\in I\;.
\end{equation} 
\end{prop}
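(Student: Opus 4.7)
The plan is to find a spatially constant particular solution, reducing the PDE to a scalar slow-fast ODE to which the classical Tihonov theorem applies. Since $\Delta e_0 = 0$ for the constant function $e_0 \equiv 1$, any function of the form $\bar\phi(t,x) = u(t)\,e_0(x)$ solves~\eqref{eq:SPDE_det} if and only if the scalar $u : I \to \R$ solves
\begin{equation}
\label{eq:proposal_scalar_ode}
\eps\,\dot u(t) = F(t, u(t))\;.
\end{equation}
Because $\|e_0\|_{H^1(\T^2)} = 1$, one has $\|\bar\phi(t,\cdot) - \phi^*(t)e_0\|_{H^1} = |u(t) - \phi^*(t)|$, and it suffices to produce a particular solution $u_\eps$ of~\eqref{eq:proposal_scalar_ode} with $|u_\eps(t) - \phi^*(t)| \leqs C\eps$ uniformly on $I$.

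For the scalar problem, Assumption~\ref{assump:stable} places $\phi^*(t)$ as a uniformly exponentially stable equilibrium branch of the associated fast system $\dot u = F(t_0, u)$ (with $t_0\in I$ frozen), which is exactly the setting of Tihonov's theorem. Concretely, I would take $u_\eps$ to be the solution of~\eqref{eq:proposal_scalar_ode} with initial condition $u_\eps(0) = \phi^*(0)$ and analyse $w(t) := u_\eps(t) - \phi^*(t)$. A Taylor expansion $F(t, \phi^*(t) + w) = a(t)w + R(t, w)$, with $R(t, w) = \Order{w^2}$ uniformly in $t \in I$ (by the polynomial form of $F$, the $\cC^1$-regularity of the coefficients $A_j$, and the boundedness of $\phi^*$), yields
\begin{equation}
\eps\,\dot w = a(t)\, w + R(t, w) - \eps\,\dot\phi^*(t)\;, \qquad w(0) = 0\;.
\end{equation}
A Lyapunov estimate for $V = \tfrac12 w^2$, exploiting $a(t) \leqs -a_- < 0$ and Young's inequality to absorb the inhomogeneous term $-\eps\,\dot\phi^* w$, gives $\eps\,\dot V \leqs -a_- V + C\eps^2$ as long as $|w|$ stays inside a fixed neighborhood of $0$. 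Gr\"onwall then yields $|w(t)| \leqs C\eps$ on $I$, and the smallness hypothesis on $|w|$ is self-consistently satisfied for $\eps$ below some threshold $\eps_0$.

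The whole argument is essentially finite-dimensional once the constant-in-$x$ ansatz has been made, so there is no genuine PDE obstacle: the statement reduces to a standard instance of slow-fast theory (see, e.g., \cite{Tihonov,Berglund_Gentz_book}). The only point that needs attention is that $F$ has a quadratically small remainder $R(t, w)$ uniformly in $t\in I$, which is immediate from the continuity of the coefficients $A_j$ and the compactness of $I$; everything else is a routine Gr\"onwall-type bootstrap.
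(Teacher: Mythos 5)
Your argument is correct, but it takes a genuinely different (and more elementary) route than the paper. You exploit the fact that $F(t,\phi)=\sum_j A_j(t)\phi^j$ has no explicit $x$-dependence, so the subspace of spatially constant functions is invariant under the flow of~\eqref{eq:SPDE_det}: with the ansatz $\bar\phi(t,x)=u(t)e_0(x)$ the Laplacian drops out and the problem collapses to the scalar slow--fast ODE $\eps\dot u=F(t,u)$, to which the classical Tihonov/Gr\"onwall bootstrap applies (your Lyapunov estimate for $w=u-\phi^*$ is the standard one, and the needed ingredients --- $\phi^*\in\cC^1$ by the implicit function theorem, $R(t,w)=\Order{w^2}$ uniformly on the compact interval $I$ --- are all available). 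Since $\norm{e_0}_{H^1}$ is a fixed constant, the scalar bound $\abs{u(t)-\phi^*(t)}\leqs C\eps$ immediately gives the claimed $H^1$ estimate, and the resulting $\bar\phi$ serves perfectly well in the later steps (Lemma~\ref{lem:F0} only needs $\bar\phi(t,\cdot)\in H^1$ with $\bar\phi-\phi^*e_0$ of order $\eps$). The paper instead follows the infinite-dimensional scheme of~\cite{Berglund_Nader_22}: it sets $\psi=\phi-\phi^*e_0$, works with the Lyapunov functional $V(\psi)=\tfrac12\norm{\psi}_{H^1}^2$ and a Gr\"onwall argument in $H^1$, which is heavier but buys robustness --- it does not rely on the invariance of the constant-mode subspace (so it would survive $x$-dependent forcing or equilibrium branches) and it controls general solutions in an $H^1$ neighbourhood of the branch rather than producing one special homogeneous solution. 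For the proposition as stated, your reduction is legitimate and complete; just make explicit the two facts you use implicitly, namely the absence of $x$-dependence in $F$ (which justifies the constant ansatz) and the $\cC^1$-regularity of $\phi^*$ (so that $\dot\phi^*$ is bounded on $I$).
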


The difference $\phi_0 = \phi - \bar\phi$ satisfies the SPDE 
\begin{equation}
\label{eq:SPDE_tilde} 
 \6\phi_0(t,x) = \frac{1}{\eps}\bigbrak{\Delta\phi_0(t,x) + 
\Wick{F_0(t,x,\phi_0(t,x))}} \6t + \frac{\sigma}{\sqrt{\eps}} 
\6W(t,x)\;, 
\end{equation} 
where 
\begin{equation}
 \Wick{F_0(t,x,\phi_0(t,x))}
 = \Wick{F(t,\bar\phi(t,x)+\phi_0(t,x))} - F(t,\bar\phi(t,x))
\end{equation} 
has similar properties as $F$, and satisfies in addition 
$F_0(t,x,0) = 0$ for all $t\in I$ and all $x\in\T^2$. More precisely, we 
have the following result. 

\begin{lemma}
\label{lem:F0} 
The renormalised forcing term is given by 
\begin{equation}
\label{eq:Ftilde} 
 \Wick{F_0(t,x,\phi_0(t,x))} 
 = a(t)\phi_0(t,x) + \sum_{j=1}^n \hat A_j(t,x) 
\Wick{\phi_0(t,x)^j}\;, 
\end{equation} 
where the $\hat A_j(t,\cdot)$ belong to $H^1$ (which is embedded in 
$\Besovspace{1}{2}{\infty}$) for all $t\in I$, and are given by 
\begin{equation}
\label{eq:def_Atilde} 
 \hat A_j(t,x) 
 = 
\begin{cases}
\displaystyle
 \sum_{i=2}^n iA_i(t)\bigbrak{\bar\phi(t,x)^{i-1} - 
 \phi^*(t)^{i-1}e_0(x)}\;, 
 & j=1\;, \\[15pt]
\displaystyle
 \sum_{i=j}^n \binom{i}{j} A_i(t) \bar\phi(t,x)^{i-j}\;, 
 & j = 2, \dots, n\;.
\end{cases} 
\end{equation} 
\end{lemma}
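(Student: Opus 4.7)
The plan is to reduce everything to the classical binomial-type identity for Hermite polynomials and then collect terms. Working at the level of the Galerkin approximation, the generating function $\exp(t\xi - t^2 C_N/2)$ for $H_m(\xi;C_N)$ factorises whenever we split $\xi = \bar\phi + \phi_0$ into a deterministic (function-valued) part $\bar\phi$ and a random part $\phi_0$, which yields
\begin{equation*}
 \Wick{(\bar\phi + \phi_0)^i}_{C_N}
 \;=\; \sum_{j=0}^{i} \binom{i}{j}\, \bar\phi^{\,i-j}\, \Wick{\phi_0^{\,j}}_{C_N}\;.
\end{equation*}
Passing to the limit $N\to\infty$ in the appropriate Besov space (using Theorem~\ref{thm:daPratoDebussche} and Proposition~\ref{prop:Besov}, together with the fact that $\bar\phi$ is function-valued with $H^1$ regularity), this identity is preserved.

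Inserting it into $\Wick{F(t,\bar\phi+\phi_0)} = \sum_{i=0}^n A_i(t)\Wick{(\bar\phi+\phi_0)^i}$ and swapping the order of summation gives
\begin{equation*}
 \Wick{F(t,\bar\phi+\phi_0)}
 = \sum_{j=0}^n \Wick{\phi_0^j} \sum_{i=j}^n \binom{i}{j} A_i(t)\,\bar\phi(t,x)^{i-j}\;.
\end{equation*}
The $j=0$ contribution is exactly $F(t,\bar\phi(t,x))$, which is cancelled by the subtraction in the definition of $\Wick{F_0}$. For $j\geqs 2$ the inner sum is already the claimed $\hat A_j(t,x)$. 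For $j=1$, the coefficient is $\partial_\phi F(t,\bar\phi(t,x)) = \sum_{i=1}^n i A_i(t) \bar\phi(t,x)^{i-1}$; adding and subtracting $a(t) = \partial_\phi F(t,\phi^*(t)) = \sum_{i=1}^n i A_i(t)\phi^*(t)^{i-1}$ splits this into the linear term $a(t)\phi_0$ plus a remainder $\hat A_1(t,x)\phi_0$ whose $i=1$ summand vanishes identically (both $\bar\phi^{0}$ and $\phi^{*0}e_0$ equal~$1$), giving the expression in~\eqref{eq:def_Atilde}. Note that $\phi^*(t)e_0(x) = \phi^*(t)$ since $e_0\equiv 1$, but keeping the factor $e_0$ makes the $H^1$-regularity statement transparent.

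It remains to check that each $\hat A_j(t,\cdot)$ lies in $H^1(\T^2)$. By Proposition~\ref{prop:stable_det}, $\bar\phi(t,\cdot) - \phi^*(t)e_0 \in H^1$, and since $\phi^*(t)e_0$ is constant in $x$, we have $\bar\phi(t,\cdot)\in H^1$. In dimension two, $H^1(\T^2)\hookrightarrow L^p(\T^2)$ for every finite $p$, so for any $k\in\N$ one controls $\|\bar\phi^k\|_{L^2}$ and $\|\nabla(\bar\phi^k)\|_{L^2} \lesssim \|\bar\phi^{k-1}\|_{L^4}\|\nabla \bar\phi\|_{L^4}$ via H\"older and Sobolev embedding; by induction $\bar\phi^{k}\in H^1$ for all $k\leqs n$. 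Linear combinations with the $\cC^1$ coefficients $A_i(t)$ preserve this, yielding $\hat A_j(t,\cdot)\in H^1$. The embedding $H^1 \hookrightarrow \Besovspace{1}{2}{\infty}$ is immediate from $H^1 = \Besovspace{1}{2}{2}$ and Proposition~\ref{prop:Besov}(i).

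The only genuinely non-trivial step is the binomial identity at the renormalised level, and the subtlety there is simply making sure the limit $N\to\infty$ commutes with multiplication by the function-valued factors $\bar\phi^{i-j}$; this is where the regularity of $\bar\phi$ and the product estimate~\eqref{eq:bound2_product_Besov} from Proposition~\ref{prop:Besov} are used. All remaining manipulations are purely algebraic reorganisations of the resulting sums.
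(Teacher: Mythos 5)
Your algebraic part is exactly the paper's argument: the binomial formula for Hermite polynomials with one summand of variance zero gives $\Wick{(\bar\phi+\phi_0)^i}=\sum_{j}\binom{i}{j}\bar\phi^{\,i-j}\Wick{\phi_0^j}$, the $j=0$ terms reproduce $F(t,\bar\phi)$ and cancel, and adding and subtracting $a(t)=\partial_\phi F(t,\phi^*(t))$ in the $j=1$ coefficient yields~\eqref{eq:def_Atilde}. That part is fine.

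The gap is in your regularity argument for $\hat A_j(t,\cdot)\in H^1$. You bound $\norm{\nabla(\bar\phi^k)}_{L^2}\lesssim\norm{\bar\phi^{k-1}}_{L^4}\norm{\nabla\bar\phi}_{L^4}$, but $\bar\phi\in H^1(\T^2)$ only gives $\nabla\bar\phi\in L^2$, not $L^4$, so the Hölder step already fails at $k=2$ and the induction never starts. This is not a cosmetic issue: $H^1(\T^2)$ is \emph{not} stable under taking powers (e.g.\ $u(x)=(\log(1/\abs{x}))^{\alpha}$ near the origin, suitably truncated, lies in $H^1$ for $\alpha<\tfrac12$ while $u^k\notin H^1$ once $k\alpha\geqs\tfrac12$), so no argument using only $\bar\phi\in H^1$ can give $\bar\phi^k\in H^1$; this is also why Proposition~\ref{prop:Besov}(iii) with $p=2$ loses regularity when both factors have index $1$. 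The paper closes this step by citing Bourdaud's symbolic-calculus theorem, which requires (and here has) more information on $\bar\phi$ than mere $H^1$ membership. A clean way to repair your version is to first record a uniform bound on $\bar\phi$: since $\bar\phi$ stays $H^1$-close to the constant branch $\phi^*(t)e_0$ and solves the smoothing parabolic equation~\eqref{eq:SPDE_det}, one has $\bar\phi(t,\cdot)\in L^\infty(\T^2)$ (indeed better, by parabolic regularity); then $\nabla(\bar\phi^k)=k\bar\phi^{k-1}\nabla\bar\phi$ is estimated by $\norm{\bar\phi}_{L^\infty}^{k-1}\norm{\nabla\bar\phi}_{L^2}$ and the induction goes through, with the $\cC^1$ coefficients $A_i(t)$ handled as you indicate.
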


\begin{remark}
The proof of~\cite[Theorem~2.4]{Berglund_Nader_22} contains a small mistake, 
which is, however, easily corrected. 
In~\cite[Equation~(3.5)]{Berglund_Nader_22}, $\bar a(t)$ should be defined as 
$\bar a(t) = \partial_\phi f(t, \phi^*(t)e_0)$ instead of 
$\bar a(t) = \partial_\phi f(t, \bar\phi(t,x))$, in order to obtain a value 
independent of $x$. The only change to me made in the proof is that the 
nonlinear term $b$ has order $h^2 + \eps h$ instead of $h^2$. 
\end{remark}

We rewrite~\eqref{eq:SPDE_tilde} as 
\begin{equation}
\label{eq:SPDE_stable} 
 \6\phi_0(t,x) = \frac{1}{\eps}\bigbrak{\Delta\phi_0(t,x) + 
 a(t) \phi_0(t,x) + \Wick{b(t,x,\phi_0(t,x))}} \6t 
 + \frac{\sigma}{\sqrt{\eps}} \6W(t,x)\;, 
\end{equation} 
where $\Wick{b}$ denotes the sum over $j$ in~\eqref{eq:Ftilde}. Note that 
$\Wick{b}$ contains a term linear in $\phi_0$. However, it has a coefficient of 
order $\eps$, since $\bar\phi$ and $\phi^\star$ are at a distance of order 
$\eps$. 

We now apply the Da Prato--Debussche trick, and consider the difference $\phi_1 
= \phi_0 - \psi$. It satisfies the equation 
\begin{equation}
\label{eq:phi1} 
 \6\phi_1(t,x) 
 = \frac{1}{\eps} \bigbrak{\Delta\phi_1(t,x) + a(t)\phi_1(t,x) 
 + \Wick{b(t,x,\psi(t,x)+\phi_1(t,x))}}\6t\;,
\end{equation} 
where
\begin{equation}
 \Wick{b(t,x,\psi(t,x)+\phi_1(t,x))}
 = \sum_{j=1}^n \hat A_j(t,x) 
 \sum_{\ell=0}^j \binom{j}{\ell} \phi_1(t,x)^{j-\ell} \Wick{\psi(t,x)^\ell}\;.
\end{equation} 
It follows from Proposition~\ref{prop:Besov} that if 
$\phi_1\in\Besovspace{\beta}{2}{\infty}$ and 
$\Wick{\psi^\ell}\in\Besovspace{\alpha}{2}{\infty}$ for $\alpha < 0$ and 
$\ell = 0,\dots,n-1$, then 
\begin{equation}
 \Wick{b(t,x,\psi+\phi_1)}
 \in\Besovspace{\bar\alpha}{2}{\infty}
 \qquad 
 \forall \bar\alpha < (2n-1)\alpha\;,
\end{equation} 
provided $\beta\geqs 1+2\alpha$. By the Schauder estimate recalled in 
Proposition~\ref{prop:Schauder}, the solution of~\eqref{eq:phi1} belongs to 
$\Besovspace{\gamma}{2}{\infty}$ for $\gamma < 2 - (2n+1)\abs{\alpha}$, which 
allows to close the fixed-point argument, in accordance with 
Theorem~\ref{thm:daPratoDebussche}.

By the embedding 
$\Besovspace{\gamma}{2}{\infty} \hookrightarrow 
\Besovspace{\gamma-1}{\infty}{\infty} = \cC^{\gamma-1}$, 
we see that the solution of~\eqref{eq:phi1} is H\"older continuous, with 
exponent almost $1$. In other words, the solution is almost Lipschitz 
continuous. Our main result is then the following.

\begin{theorem}[Concentration estimate for $\phi_1$]
\label{thm:phi1} 
For any choice of $\gamma < 2$ and $\nu < 1 - \frac{\gamma}{2}$, there exist 
constants $C(T,\eps), M, \kappa, h_0, \eps_0 > 0$ such that, whenever $\eps < 
\eps_0$ and $h < h_0\eps^\nu$, one has  
\begin{align}
 \biggprob{\sup_{t\in[0,T]} \normBesov{\phi_1(t)}{\gamma}{2}{\infty} > 
M\eps^{-\nu}h(h+\eps)}
 &\leqs C(T,\eps) \e^{-\kappa h^2/\sigma^2}\;, \\
 \biggprob{\sup_{t\in[0,T]} \norm{\phi_1(t)}_{\cC^{\gamma-1}} > 
M\eps^{-\nu}h(h+\eps)}
 &\leqs C(T,\eps) \e^{-\kappa h^2/\sigma^2}\;.
\end{align} 
\end{theorem}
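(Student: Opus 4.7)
My plan is to combine the concentration estimates for Wick powers of $\psi$
from Theorem~\ref{thm:stoch_convolution} with a deterministic bootstrap
argument on a suitable high-probability event. Fix $\alpha<0$ of small absolute
value and define
\begin{equation*}
 \Omega_h := \biggset{\sup_{t\in[0,T]}
  \normBesov{\Wick{\psi(t)^\ell}}{\alpha}{2}{\infty} \leqs h^\ell
  \text{ for every } \ell = 1,\dots, 2n-1}\;.
\end{equation*}
Applying Theorem~\ref{thm:stoch_convolution} to each $\ell$ and taking a union
bound yields $\fP(\Omega_h^c) \leqs C(T,\eps)\e^{-\kappa h^2/\sigma^2}$, which
supplies the required probabilistic factor. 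The remainder of the argument is
deterministic and is carried out on $\Omega_h$.

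I write \eqref{eq:phi1} in mild form with respect to the evolution family
$U(t,s)$ generated by $\eps^{-1}(\Delta+a(t))$,
\begin{equation*}
 \phi_1(t) = \frac{1}{\eps}\int_0^t
 U(t,s)\Wick{b\bigpar{s,\cdot,\psi(s)+\phi_1(s)}}\,\6s\;,
\end{equation*}
and introduce the stopping time
\begin{equation*}
 \tau := \inf\bigset{t\in[0,T]\colon \normBesov{\phi_1(t)}{\gamma}{2}{\infty}
> M\eps^{-\nu}h(h+\eps)}\;,
\end{equation*}
aiming to prove that, for $M$ large enough and $h \leqs h_0\eps^\nu$, one has
$\tau = T$ on $\Omega_h$, which is exactly the statement of the theorem.

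For $t\leqs\tau$ I expand
$\Wick{b(s,\cdot,\psi+\phi_1)} = \sum_{j=1}^{n}\hat A_j(s,\cdot)
\sum_{\ell=0}^{j}\binom{j}{\ell}\phi_1^{j-\ell}\Wick{\psi^\ell}$ and control
each summand in $\Besovspace{\bar\alpha}{2}{\infty}$, for any
$\bar\alpha < (2n-1)\alpha$, using the product estimates of parts~3 and~4 of
Proposition~\ref{prop:Besov} together with
$\hat A_j \in H^1\hookrightarrow\Besovspace{1}{2}{\infty}$ from
Lemma~\ref{lem:F0}. Crucially, Lemma~\ref{lem:F0} combined with
Proposition~\ref{prop:stable_det} gives
$\smallnorm{\hat A_1(t,\cdot)}_{\Besovspace{1}{2}{\infty}}=\Order{\eps}$,
because $\bar\phi$ and $\phi^* e_0$ differ by $\Order{\eps}$ in $H^1$. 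On
$\Omega_h$, the dominant contributions to the source are therefore the
$j=1,\ell=1$ term of size $\eps h$ and the $j=2,\ell=2$ term of size $h^2$,
plus a linear-in-$\phi_1$ piece with coefficient $\Order{\eps}$; all others are
of smaller order under the bootstrap assumption
$\normBesov{\phi_1(t)}{\gamma}{2}{\infty} \leqs M\eps^{-\nu}h(h+\eps)$.
Inserting these bounds into the Duhamel formula, applying the Schauder estimate
of Proposition~\ref{prop:Schauder} and using the exponential damping
$\e^{-a_-(t-s)/\eps}$ (available because $a(t)\leqs -a_-$), one arrives at
\begin{equation*}
 \sup_{t\leqs\tau}\normBesov{\phi_1(t)}{\gamma}{2}{\infty}
 \leqs C\eps^{-\nu}h(h+\eps) + C\eps^{1-\nu}
 \sup_{t\leqs\tau}\normBesov{\phi_1(t)}{\gamma}{2}{\infty}\;.
\end{equation*}
For $\eps$ small enough and $M\geqs 2C$, the second term is absorbed,
contradicting the definition of $\tau$ for $\tau<T$. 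The H\"older bound then
follows from the embedding
$\Besovspace{\gamma}{2}{\infty}\hookrightarrow\cC^{\gamma-1}$.

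The main technical obstacle is to choose the regularity exponents so that
every step closes consistently: $\alpha$ and $\bar\alpha$ must be small enough
in absolute value that \eqref{eq:bound2_product_Besov} applies with
$\phi_1\in\Besovspace{\gamma}{2}{\infty}$ (which requires $\gamma\geqs
1+2\alpha$), while Schauder can only convert $\Besovspace{\bar\alpha}{2}{\infty}$
into $\Besovspace{\gamma}{2}{\infty}$ with an integrable time-singularity when
$\gamma-\bar\alpha<2$, and the resulting $\eps$-scaling must match the
prescribed exponent $\nu<1-\gamma/2$. Tracking these constraints together with
the $(h,\eps)$-bookkeeping across all $(j,\ell)$ combinations, and in
particular ensuring that the factor $\eps^{-\nu}$ does not destroy the
$h(h+\eps)$ estimate in the regime $h\ll\eps^\nu$, is the delicate part.
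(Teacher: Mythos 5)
Your proposal is correct and follows essentially the same route as the paper: control the Wick powers of $\psi$ via Theorem~\ref{thm:stoch_convolution} and a union bound, bound $\Wick{b}$ in $\Besovspace{\bar\alpha}{2}{\infty}$ with the product estimates and the $\Order{\eps}$ bound on $\hat A_1$, then close a stopping-time bootstrap through Duhamel and the Schauder estimate of Corollary~\ref{prop:Schauder_cor}, finishing with the embedding into $\cC^{\gamma-1}$. The only cosmetic differences are that you absorb the linear-in-$\phi_1$ contribution via a Gronwall-type term $C\eps^{1-\nu}\sup$, whereas the paper fixes $H=2M\eps^{-\nu}h(h+\eps)$ and checks $(H+h)(H+h+\eps)<2h(h+\eps)$ directly, and that your event $\Omega_h$ needlessly includes Wick powers up to $2n-1$ instead of $n$.
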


This result shows in particular that sample paths of $\phi_1$ are concentrated 
in a ball in $\cC^{\gamma-1}$-norm of size 
\begin{equation}
 \eps^{-\nu} \sigma (\sigma + \eps) 
 \asymp 
\begin{cases}
  \eps^{-\nu}\sigma^2 & \text{if $\sigma > \eps$\;,} \\
  \eps^{1-\nu}\sigma & \text{if $\sigma \leqs \eps$\;.}
\end{cases}
\end{equation} 

%%%%%%%%%%%%%%%%%%%%%%%%%%%%%%%%%%%%%%%%%%%%%%%%%%%%%%%%%%%%%%%%%%%%%%%%%%%%%%

\subsection{The case of bifurcations}
\label{ssec:bif} 

In this section, we comment on how the results of the last section can be 
extended to situations where the nonlinearity $F$ fails to satisfy 
Assumption~\ref{assump:stable}, that is, in the case of a bifurcation. In the 
work~\cite{Berglund_Nader_22}, which concerned SPDEs on the one-dimensional 
torus, we considered the case of an avoided transcritical bifurcation, where 
$F$ is given locally by 
\begin{equation}
 F(t,\phi) = \delta + t^2 - \phi^2 + \bigOrder{(\abs{t} + \abs{\phi})^3}
\end{equation} 
with $0 < \delta \ll 1$. In that case, there is a stable equilibrium branch 
$\phi^*_+(t) \simeq \sqrt{\delta + t^2}$ approaching an unstable branch 
$\phi^*_-(t) \simeq -\sqrt{\delta + t^2}$ at distance $2\sqrt{\delta}$. While 
the linearization $a(t) = \partial_\phi F(t,\phi^*(t))$ remains positive, its 
value becomes small in terms of $\delta$ near $t=0$. As a result, while the 
system still behaves as in the stable case when $\sigma \ll 
(\delta\vee\eps)^{3/4}$, a new behaviour emerges for $\sigma \gg 
(\delta\vee\eps)^{3/4}$: it becomes likely for sample paths to cross the 
unstable equilibrium branch, and travel in a short time to a distant region of 
space. 

Here we will illustrate how these results can be transposed to singular SPDEs 
on the two-dimensional torus. However, for a change, we are going to take as an 
example the equation 
\begin{equation}
\label{eq:SPDE_pitchfork} 
 \6\phi(t,x) = \frac{1}{\eps}\bigbrak{\Delta\phi(t,x) 
 + a(t)\phi(t,x) - \Wick{\phi(t,x)^3}} \6t + \frac{\sigma}{\sqrt{\eps}} 
\6W(t,x)\;,
\end{equation}
which describes a pitchfork bifurcation when $a(t)$ changes from being negative 
to being positive at a time $t^*$. In the deterministic case $\sigma = 0$, there 
is a phenomenon known as bifurcation delay: solutions attracted by the stable 
equilibrium branch $\phi^*(t) = 0$ for $t < t^*$ remain close to $0$ for a time 
of order $1$ beyond the bifurcation time $t^*$, even though the equilibrium 
branch has become unstable. This is due to the solution becoming exponentially 
close to $0$ during the stable phase, and a time of order $1$ being required for 
the solution to reach again values of order $1$.  

In the one-dimensional SDE case, the effect of noise on such a system has been 
studied in~\cite{BG_pitchfork}. The main result of that work is that sample 
paths remain with high probability at a distance of order $\sigma\eps^{-1/4}$ 
from zero up to a time $t^* + \Order{\eps^{1/2}}$, but are unlikely to remain 
close to $0$ after times of order $t^* + 
\Order{(\eps\log(\sigma^{-1}))^{1/2}}$. 
The effect of noise is thus to reduce the bifurcation delay from order $1$ to 
order $(\eps\log(\sigma^{-1}))^{1/2}$.

In order to analyse the SPDE~\eqref{eq:SPDE_pitchfork}, we start by carrying 
out the change of variables 
\begin{equation}
 \phi(t,x) = \psi_\perp(t,x) + \phi_1(t,x)\;,
\end{equation} 
where the stochastic convolution $\psi_\perp$ solves the SPDE 
\begin{equation}
  \6\psi_\perp(t,x) = \frac{1}{\eps}\bigbrak{\Delta_\perp\psi_\perp(t,x) 
 + a(t)\psi_\perp(t,x)} \6t + \frac{\sigma}{\sqrt{\eps}} 
\6W_\perp(t,x)
\end{equation}
with zero initial condition. Here the noise $\6W_\perp$ acts only on non-zero 
Fourier modes, implying that the spatial average of $\psi_\perp(t,x)$ always 
remains equal to zero. We use the notation $\Delta_\perp$ to emphasize that the 
Laplacian only acts on non-zero Fourier modes, although it has the same effect 
as the usual Laplacian. The resulting equation for $\phi_1$ reads 
\begin{equation}
\label{eq:SPDE_pitch_phi1} 
 \6\phi_1(t,x) = \frac{1}{\eps}\bigbrak{\Delta\phi_1(t,x) 
 + a(t)\phi_1(t,x) + \Wick{F(\psi_\perp(t,x), \phi_1(t,x))}} \6t + 
\frac{\sigma}{\sqrt{\eps}} \6W_0(t,x)\;,
\end{equation}
where
\begin{equation}
\Wick{F(\psi_\perp, \phi_1)}
 = - \Wick{\psi_\perp^3} - 3\phi_1\Wick{\psi_\perp^2}
 - 3\phi_1^2\psi_\perp - \phi_1^3\;.
\end{equation} 
The next step is to split $\phi_1$ into its mean and oscillating spatial part, 
writing 
\begin{equation}
 \phi_1(t,x) = \phi_1^0(t)e_0(x) + \phi_1^\perp(t,x)\;, \qquad 
 \phi_1^0(t) = \pscal{e_0}{\phi_1(t,\cdot)}\;.
\end{equation} 
This results in the coupled SDE--SPDE system 
\begin{align}
\label{eq:bif_phi10} 
  \6\phi_1^0(t) &= \frac{1}{\eps}\bigbrak{ 
 a(t)\phi_1^0(t) - \phi_1^0(t)^3 + F_0(\psi_\perp, \phi_1^0, \phi_1^\perp)} 
\6t + 
\frac{\sigma}{\sqrt{\eps}} \6W_0(t)\;,\\
 \6\phi_1^\perp(t,x) &= \frac{1}{\eps}\bigbrak{\Delta_\perp\phi_1^\perp(t,x) 
 + a(t)\phi_1^\perp(t,x) + \Wick{F_\perp(\psi_\perp, \phi_1^0, \phi_1^\perp)}} 
\6t\;,
\label{eq:bif_phi1perp} 
\end{align}
where $F_0$ and $\Wick{F_\perp}$ are nonlocal nonlinearities given by 
\begin{align}
 F_0(\psi_\perp, \phi_1^0, \phi_1^\perp) 
 &= \pscal{e_0}{\Wick{F(\psi_\perp, \phi_1^0 e_0 + \phi_1^\perp)}}\;, \\
 &= \delta_0(\Wick{F(\psi_\perp, \phi_1^0 e_0 + \phi_1^\perp)})\;, \\
 F_\perp(\psi_\perp, \phi_1^0, \phi_1^\perp) 
 &= \Wick{F(\psi_\perp, \phi_1^0 e_0 + \phi_1^\perp)} 
 - F_0(\psi_\perp, \phi_1^0, \phi_1^\perp)\;.
\end{align}
We start by describing concentration properties of $\phi_1^\perp$. For that, 
given a parameter $H_0 > 0$, we introduce the stopping time 
\begin{equation}
 \tau_0(H_0) = \inf\bigset{t\in[0,T] \colon \abs{\phi_1^0(t)} > H_0}\;.
\end{equation} 

\begin{theorem}[Concentration estimate for $\phi_1^\perp$]
\label{thm:phi1perp} 
Assume there exists a constant $a_0 > 0$ such that $a(t) \leqs (2\pi)^2 - a_0$ 
for all $t\in[0,T]$. Then for any choice of $\gamma < 2$ and $\nu < 1 - 
\frac\gamma2$, there exist constants $C(T,\eps), M, \kappa, h_0 > 0$ such that, 
whenever $h + H_0 \leqs h_0 \eps^{\nu/2}$, one has 
\begin{equation}
 \biggprob{\sup_{t\in[0,T\wedge\tau_0(H_0)]} 
 \smallnorm{\phi_1^\perp(t)}_{\cC^{\gamma-1}} > M\eps^{-\nu}(h + H_0)^3}
 \leqs C(T,\eps) \e^{-\kappa h^2/\sigma^2}\;.
\end{equation} 
\end{theorem}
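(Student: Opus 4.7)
The plan is to view~\eqref{eq:bif_phi1perp} as a deterministic stable equation driven by the forcing $\Wick{F_\perp(\psi_\perp,\phi_1^0,\phi_1^\perp)}$, and to follow the proof of Theorem~\ref{thm:phi1} essentially verbatim. The crucial observation is that the smallest eigenvalue of $-\Delta_\perp$ equals $(2\pi)^2$, so the hypothesis $a(t)\leqs (2\pi)^2 - a_0$ guarantees that the spectrum of $\Delta_\perp + a(t)$ is bounded above by $-a_0 < 0$, uniformly in $t\in[0,T]$. The semigroup generated by $\eps^{-1}(\Delta_\perp + a(\cdot))$ is therefore uniformly contractive on non-zero Fourier modes, and the Schauder estimate invoked in the proof of Theorem~\ref{thm:phi1} applies with the same $\eps^{-\nu}$ factor.

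The probabilistic input is isolated by introducing the event
\begin{equation*}
 \Omega_\psi = \bigcap_{\ell=1}^{3} \biggset{\sup_{0\leqs t\leqs T} \normBesov{\Wick{\psi_\perp(t)^\ell}}{\alpha}{2}{\infty} \leqs h^\ell}
\end{equation*}
for some $\alpha < 0$ chosen sufficiently close to zero. Since $\psi_\perp$ is the stochastic convolution restricted to non-zero Fourier modes, Theorem~\ref{thm:stoch_convolution} together with a union bound gives $\fP(\Omega_\psi^c) \leqs C(T,\eps)\e^{-\kappa h^2/\sigma^2}$. It therefore suffices to establish, deterministically on $\Omega_\psi$ and up to time $T\wedge \tau_0(H_0)$, the a priori bound $\normBesov{\phi_1^\perp(t)}{\gamma}{2}{\infty} \leqs M\eps^{-\nu}(h+H_0)^3$, from which the claim follows by the embedding $\Besovspace{\gamma}{2}{\infty}\hookrightarrow\cC^{\gamma-1}$.

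To derive this deterministic estimate, I would expand the cubic nonlinearity as
\begin{equation*}
 \Wick{F(\psi_\perp,\phi_1^0 e_0+\phi_1^\perp)} = -\Wick{\psi_\perp^3} - 3(\phi_1^0 e_0+\phi_1^\perp)\Wick{\psi_\perp^2} - 3(\phi_1^0 e_0+\phi_1^\perp)^2\psi_\perp - (\phi_1^0 e_0+\phi_1^\perp)^3
\end{equation*}
and apply the product estimates of Proposition~\ref{prop:Besov}. On $\Omega_\psi\cap\set{t\leqs\tau_0(H_0)}$, each Wick power of $\psi_\perp$ contributes a factor $h$ in the Besov norm of index $\alpha$, while each occurrence of $\phi_1^0 e_0+\phi_1^\perp$ contributes $H_0 + \normBesov{\phi_1^\perp}{\gamma}{2}{\infty}$. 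Summing the four terms, and observing that the projection onto non-zero modes $F_\perp = F - \pscal{e_0}{F}e_0$ differs from the identity by a smoothing rank-one operator, yields a bound on $\Wick{F_\perp}$ of the form $(h+H_0+\normBesov{\phi_1^\perp}{\gamma}{2}{\infty})^3$ in an appropriate Besov space of negative index.

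The Schauder estimate then produces a fixed-point inequality of the shape
\begin{equation*}
 \sup_{s\leqs t}\normBesov{\phi_1^\perp(s)}{\gamma}{2}{\infty} \leqs M\eps^{-\nu}\Bigpar{h+H_0+\sup_{s\leqs t}\normBesov{\phi_1^\perp(s)}{\gamma}{2}{\infty}}^3\;,
\end{equation*}
which, under the smallness assumption $h+H_0\leqs h_0\eps^{\nu/2}$, satisfies $M\eps^{-\nu}(h+H_0)^2 \leqs Mh_0^2$, and hence closes by a standard continuity argument at $\normBesov{\phi_1^\perp(t)}{\gamma}{2}{\infty}\leqs 2M\eps^{-\nu}(h+H_0)^3$. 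The main technical subtlety will be to verify that the product estimates apply with consistent choices of indices when one factor has positive regularity coming from $\phi_1^\perp\in\Besovspace{\gamma}{2}{\infty}$ and another has the negative regularity $\alpha$ of a Wick power; as in Theorem~\ref{thm:phi1}, this will require fixing $\alpha$ close enough to zero in a $\gamma$-dependent way, and checking that the resulting $\bar\alpha > \gamma - 2$ remains negative so that the Schauder gain of two derivatives suffices.
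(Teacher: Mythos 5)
Your proposal is correct and follows essentially the same route as the paper: isolate the noise through the event that the Wick powers of $\psi_\perp$ stay small (controlled by Theorem~\ref{thm:stoch_convolution}), bound $\Wick{F_\perp}$ cubically in $h+H_0+\smallnormBesov{\phi_1^\perp}{\gamma}{2}{\infty}$ via the Besov product estimates, apply the Schauder estimate using the spectral gap of $\Delta_\perp+a(t)$ guaranteed by $a(t)\leqs(2\pi)^2-a_0$, and close the resulting cubic inequality self-consistently under $h+H_0\leqs h_0\eps^{\nu/2}$. The only cosmetic difference is that the paper phrases the bootstrap via a stopping time $\tau_\perp(H_\perp)$ with $H_\perp=2M_1\eps^{-\nu}(h+H_0)^3$ rather than your continuity argument, which is equivalent.
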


Note in particular the weaker condition on $a(t)$: instead of having to stay 
negative, $a(t)$ may become positive, as long as it stays smaller than 
$(2\pi)^2$. This is because the eigenvalues of the Laplacian $\Delta_\perp$ are 
bounded above by $-(2\pi)^2$. 

\begin{remark}
One can easily extend the result to cases where 
$a(t)$ exceeds the value $(2\pi)^2$ by incorporating more Fourier modes in the 
variables $\phi_1^0$. 
\end{remark}

It is now relatively straightforward to extend the one-dimensional results 
from~\cite{BG_pitchfork} to the SDE~\eqref{eq:bif_phi10} governing the zeroth 
Fourier mode.
The idea is that its solution is likely to remain close, on some time interval, 
to the solution of the linearised equation
\begin{equation}
\label{eq:phicirc} 
 \6\phi^\circ(t) = \frac{1}{\eps}\
 a(t)\phi^\circ(t)\6t + \frac{\sigma}{\sqrt{\eps}} \6W_0(t)\;,
\end{equation} 
which is a Gaussian process, with variance 
\begin{equation}
 v^\circ(t) = v^\circ(0) + \frac{\sigma^2}{\eps}
 \int_0^t \e^{2\alpha(t,t_1)/\eps} \6t_1\;, 
 \qquad
 \alpha(t,t_1) = \int_{t_1}^t a(t,t_2)\6t_2\;.
\end{equation} 
One can show (see~\cite[Lemma~4.2]{BG_pitchfork}) that for an initial variance 
$v^\circ(0)$ of order $\sigma^2$, bounded away from zero, one has 
\begin{equation}
 v^\circ(t)
 \asymp 
 \begin{cases}
  \dfrac{\sigma^2}{\abs{t - t^*}}
  & \text{for $0\leqs t\leqs t^* - \sqrt{\eps}$\;,} \\[10pt]
  \dfrac{\sigma^2}{\sqrt{\eps}}
  & \text{for $- \sqrt{\eps} \leqs t-t^* \leqs \sqrt{\eps}$\;,} \\[10pt]
  \dfrac{\sigma^2}{\sqrt{\eps}} \e^{2\alpha(t,t^*)/\eps}
  & \text{for $t\geqs t^* + \sqrt{\eps}$\;.} 
 \end{cases}
\end{equation}
Note that the variance increases slowly up to time $t^*+\sqrt{\eps}$, and then 
increases exponentially fast. This suggests defining sets 
\begin{align}
 \cB_-(h_-) &= \biggset{(t,\phi_1^0) \in [0, t^*+\sqrt{\eps}]\times\R \colon 
 \abs{\phi_1^0} \leqs \frac{h_-}{\sigma} v^\circ(t)}\;,\\
 \cB_+(h_+) &= \biggset{(t,\phi_1^0) \in [t^*+\sqrt{\eps}, T]\times\R \colon 
 \abs{\phi_1^0} \leqs \frac{h_+}{\sqrt{a(t)}}}\;.
\end{align} 
The first set is a union of confidence intervals associated with the variance 
$v^\circ(t)$. The second set is motivated by the fact that the variance of 
processes obtained by linearization grows like 
\begin{equation}
\frac{\sigma^2}{2a(t)}\e^{2\alpha(t,t^*)/\eps}\;. 
\end{equation} 
One then has the following generalisation of~\cite[Theorem~2.10]{BG_pitchfork} 
and~\cite[Proposition~4.7]{BG_pitchfork}. 

\begin{theorem}[Behaviour of $\phi_1^0(t)$ near a pitchfork bifurcation]
\label{thm:phi10} 
There exist positive constants $M, \eps_0, h_0$ such that, for any 
$\eps < \eps_0$ and $h_- \leqs h_0\eps^{1/2}$, and any $t \leqs t^* + 
\eps^{1/2}$, one has 
\begin{equation}
\label{eq:pitchfork_bound1} 
 \bigprob{\tau_{\cB_-(h_-)} \leqs t}
 \leqs C(t,\eps) \exp\biggset{-\frac{h_-^2}{2\sigma^2} \biggbrak{1 - 
\Order{\sqrt{\eps}\,} - \biggOrder{\frac{h_-^2}{\eps}}}}\;,
\end{equation} 
where $C(t,\eps) = \Order{\alpha(t)/\eps^2}$. 
Furthermore, for $h_+ = \sigma\log(\sigma^{-1})^{1/2}$ and 
any $t \geqs t^* + \eps^{1/2}$, one has 
\begin{equation}
\label{eq:pitchfork_bound2} 
 \bigprob{\tau_{\cB_+(h_+)} \geqs t}
 \leqs \frac{h_+}{\sigma} \exp\biggset{-\kappa \frac{\alpha(t,t^*)}{\eps}}
 + C(t,\eps) \e^{-\kappa \log(\sigma^{-1})/\sqrt{\eps}}
\end{equation} 
for a constant $\kappa > 0$. 
\end{theorem}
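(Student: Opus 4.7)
The plan is to reduce the problem to the one-dimensional pitchfork analysis carried out in \cite{BG_pitchfork}, by treating the extra nonlocal term $F_0(\psi_\perp, \phi_1^0, \phi_1^\perp)/\eps$ appearing in \eqref{eq:bif_phi10} as a small controllable perturbation of the one-dimensional pitchfork normal form. The first step is to introduce the good event
\begin{equation}
\Omega_* = \biggset{\max_{m=1,2,3}\sup_{0\leqs t\leqs T}\smallnormBesov{\Wick{\psi_\perp(t)^m}}{\alpha}{2}{\infty} \leqs h^m} \cap \biggset{\sup_{0\leqs t\leqs T\wedge\tau_0(H_0)} \smallnorm{\phi_1^\perp(t)}_{\cC^{\gamma-1}} \leqs M\eps^{-\nu}(h+H_0)^3},
\end{equation}
whose complement has probability bounded by $C(T,\eps)\e^{-\kappa h^2/\sigma^2}$ by combining Theorems~\ref{thm:stoch_convolution} and \ref{thm:phi1perp}. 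Expanding $F_0 = \pscal{e_0}{\Wick{F(\psi_\perp, \phi_1^0 e_0 + \phi_1^\perp)}}$ and bounding the pairing with $e_0$ by the corresponding Besov norm (as discussed in the remark following Theorem~\ref{thm:stoch_convolution}), one obtains on $\Omega_*$ and for $t \leqs \tau_0(H_0)$ a deterministic polynomial bound $\abs{F_0} \lesssim h^3 + H_0 h^2 + H_0^2 h$, with lower-order corrections involving $\phi_1^\perp$.

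Next, I would write $\phi_1^0 = \phi^\circ + R$, where $\phi^\circ$ solves the linear Gaussian SDE \eqref{eq:phicirc} with variance $v^\circ(t)$, and the remainder $R$ satisfies the pathwise ODE $\eps R' = a(t)R - (\phi^\circ + R)^3 + F_0$. Combining the deterministic control on $F_0$ with the variance estimates of \cite[Lemma~4.2]{BG_pitchfork}, a Gronwall-type argument shows that on $\Omega_*$, and as long as $\phi_1^0$ stays in the relevant confidence tube, $\abs{R(t)}$ is strictly subleading compared to $\sqrt{v^\circ(t)}$. For the bound \eqref{eq:pitchfork_bound1} in the pre-bifurcation regime $t \leqs t^* + \sqrt\eps$, the Gaussian tail estimate $\fP(\abs{\phi^\circ(t)} \geqs h_-) \leqs \exp\{-h_-^2/(2v^\circ(t))\}$ combined with a partition of $[0,t]$ into subintervals along which the variance is nearly constant, in the spirit of \cite[Proposition~4.4]{BG_pitchfork}, yields the stated bound. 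For \eqref{eq:pitchfork_bound2} in the post-bifurcation regime $t \geqs t^* + \sqrt\eps$, the cubic term provides stabilization against blow-up, while the linear instability drives sample paths out of a $h_+/\sqrt{a(t)}$-tube on a time scale $\eps\log(\sigma^{-1})^{1/2}$; the exit-time analysis then follows \cite[Proposition~4.7]{BG_pitchfork}.

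The hard part is the careful bookkeeping needed to ensure that the perturbation $F_0$ does not degrade the sharpness of the exponential rates. The parameter $h$ in $\Omega_*$ must be chosen self-consistently with $h_-$ (respectively $h_+$), so that the exponent $\kappa h^2/\sigma^2$ in $\fP(\Omega_*^c)$ matches the exponent in the final bound, while simultaneously the deterministic bound on $\abs{F_0}$ is small enough to leave the geometry of the confidence sets $\cB_\pm$ essentially intact. This requires a delicate balancing of $h$, $H_0$, $\sigma$, and $\eps$, and uses the a priori smallness assumption $h + H_0 \lesssim \eps^{\nu/2}$ carried over from Theorem~\ref{thm:phi1perp}. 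A secondary technical point is that the singularity of Wick powers forces $F_0$ to be interpreted as a pairing against $e_0$ in the distributional sense, so the concentration bounds of Theorem~\ref{thm:stoch_convolution} for the $\Besovspace{\alpha}{2}{\infty}$ norm, rather than any pointwise bound, are essential.
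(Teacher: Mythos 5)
Your proposal is correct and follows essentially the same route as the paper: both reduce to the one-dimensional pitchfork analysis of \cite{BG_pitchfork} by controlling $F_0$ and $\phi_1^\perp$ on a high-probability event supplied by Theorems~\ref{thm:stoch_convolution} and~\ref{thm:phi1perp}, and by comparing $\phi_1^0$ with the linear Gaussian process $\phi^\circ$ through the Duhamel representation of~\eqref{eq:bif_phi10} (your Gronwall step on the remainder $R$ is just this integral bound in disguise). The only difference is that the paper pins down the parameter balancing you describe abstractly, choosing explicitly $H_0 = h_-\eps^{-1/4}$, $h = h_-$, $H_\perp = \eps^{-\nu}(h+H_0)^3$ for~\eqref{eq:pitchfork_bound1} and $h = H_\perp = \eps^{-1/4}h_+$ for~\eqref{eq:pitchfork_bound2}, so that $\fP$ of the bad event is negligible respectively matches the second term of~\eqref{eq:pitchfork_bound2}.
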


The bound~\eqref{eq:pitchfork_bound1} shows that when $\sigma \ll h_- 
\ll\sqrt{\eps}$, sample paths are likely to stay in $\cB_-(h_-)$ up to time 
$t^*+\sqrt{\eps}$. At time $t^*+\sqrt{\eps}$, typical fluctuations have a size 
of order $\sigma\eps^{-1/4}$. Since $\alpha(t,t^*)$ grows like $(t-t^*)^2$, the 
bound~\eqref{eq:pitchfork_bound2} shows that sample paths are likely to leave a 
neighborhood of size $\sigma$ of $0$ at times of order 
$\sqrt{\eps\log(\sigma^{-1})}$. 

%%%%%%%%%%%%%%%%%%%%%%%%%%%%%%%%%%%%%%%%%%%%%%%%%%%%%%%%%%%%%%%%%%%%%%%%%%%%%%

\section{Proof of Theorem~\ref{thm:stoch_convolution}}
\label{sec:proof_stoch_convolution}

\subsection{Relation between Hermite polynomials and binomial formula}

We will first give a proof of Theorem~\ref{thm:stoch_convolution} in the 
particular case where $a(t) = -1$ for all $t$. That is, we consider the linear 
equation 
\begin{equation}
 \6\psi(t,x) = \frac{1}{\eps}\bigbrak{\Delta\psi(t,x) - \psi(t,x)} \6t 
 + \frac{\sigma}{\sqrt{\eps}} \6W(t,x)\;.
\end{equation} 
Its projection on the $k$th basis vector $e_k$ is given by
\begin{equation}
\label{eq:stoch_convolution_k} 
 \6\psi_k(t) = -\frac{1}{\eps}(\mu_k+1)\psi_k(t) \6t 
 + \frac{\sigma}{\sqrt{\eps}} \6W_k(t)\;,
\end{equation} 
where the $\mu_k$ are the eigenvalues~\eqref{eq:ev_Laplacian} of the Laplacian, 
and the $\set{W_k(t)}_{t\geqs0}$ are independent Wiener processes (actually, 
since we use complex Fourier series, we have $\expec{\6W_{k_1}\6W_{k_2}} = 
\delta_{k_1,-k_2}\6t$, but this yields equivalent results). We write 
$\alpha_k(t,t_1) = -(\mu_k+1)(t-t_1)$ and $\alpha_k(t,0) =  \alpha_k(t)$ for 
brevity. The solution of \eqref{eq:stoch_convolution_k} is an 
Ornstein--Uhlenbeck process, which can be represented using Duhamel's principle 
by the Ito integral
\begin{equation}
\label{stoch_convolution_k} 
 \psi_k(t) =  \e^{\alpha_k(t)/\eps}\psi_k(0) + 
\frac{\sigma}{\sqrt{\eps}}\int_0^t \e^{\alpha_k(t,t_1)/\eps} \6W_k(t_1)\;.
\end{equation} 
At any time $t\geqs0$, $\psi_k(t)$ is a zero-mean Gaussian 
random variable of variance 
\begin{align}
\label{var:stoch_convolution_k} 
v_k(t) &= \variance\bigpar{\psi_k(0)}\e^{2\alpha_k(t)/\eps} + 
\frac{\sigma^2}{\eps}\int_0^t \e^{2\alpha_k(t,t_1)/\eps} \6t_1 \\
&= \variance\bigpar{\psi_k(0)}\e^{2\alpha_k(t)/\eps} +
\frac{\sigma^2}{2(\mu_k+1)} \Bigbrak{1 - \e^{2\alpha_k(t)/\eps}}\;.
\end{align} 
In order to obtain a stationary process, we will assume that the initial 
conditions $\psi_k(0)$ follow centred normal distributions with variance 
$v_k = \sigma^2/[2(\mu_k+1)]$, which are mutually independent, and independent 
of the Wiener processes. In this way, we have $v_k(t) = v_k$ for all $t$. 

Fix $\alpha<0$. By Definition \ref{norm_besov} of Besov norms, 
\begin{align}
P_m(h) :={}&\Bigprob{\sup_{0\leqs t\leqs T}
\norm{\Wick{\psi(t,\cdot)^m}}_{\cB^\alpha_{2,\infty}}>h^m} \\
={}&\Bigprob{\sup_{0\leqs t\leqs T}\sup_{q_0\geqs0}2^{-\abs{\alpha}q_0}
\norm{\delta_{q_0}(\Wick{\psi(t,\cdot)^m})}_{L^2}>h^m}\;\\
={}&\Bigprob{\exists q_0\geqs0; \sup_{0\leqs t\leqs T}
\norm{\delta_{q_0}(\Wick{\psi(t,\cdot)^m})}_{L^2}>h^m2^{\abs{\alpha}q_0 }}\;\\
\leqs{}&\sum_{q_0\geqs0}\Bigprob{\sup_{0\leqs t\leqs T}
\norm{\delta_{q_0}(\Wick{\psi(t,\cdot)^m})}_{L^2}>h^m2^{\abs{\alpha}q_0}}\;.
\label{eq:bound_Besovnorm_1} 
\end{align}

\begin{remark}
At any fixed time $t$, the law of $\psi(t,\cdot)$ is that of the truncated 
Gaussian free field, with variance 
\begin{equation}
 \sum_{k\in\Z^2 \colon \abs{k}\leqs N} v_k = C_N\;,
\end{equation} 
which diverges like $\sigma^2\log(N)$, as mentioned in \eqref{eq:def_CN}.
\end{remark}

In what follows, it will be convenient to use multiindex notations. 
For any $\hbn\in\N^{\N}$ with finitely many nonzero components, we write 
\begin{equation}
\abs{\hbn}=\sum_{q\geqs0}\hbn_q \qquad\text{and}\qquad  
\hbn!:=\prod_{q\geqs0}\hbn_q!\;.
\end{equation}
Since $\hbn$ has finitely many nonzero components, these quantities are 
indeed well-defined. Let 
\begin{equation}
\brak{\hbn}=\#\set{q:\hbn_q>0}
\end{equation}
be the number of these nonzero components. We can order them as 
$q_1<q_2<...<\q$, where 
\begin{equation}
\q=\max\{q:\hbn_q>0\}
\end{equation}
is the index of the largest nonzero entry of $\hbn$. In what follows, we will 
always assume that $\abs{\hbn} = m$. We notice that this implies $\brak{\hbn} 
\leqs m$. 

The projection of $\psi(t,\cdot)$ on the annulus $\cA_q$ has constant variance 
\begin{equation}
 c_q := \E\Bigbrak{\norm{\delta_q\psi(t,\cdot)}^2_{L^2}}
 = \sum_{k\in\cA_q} v_k\;. 
\end{equation} 
In fact, $\delta_q\psi(t,x)$ has variance $c_q$ for all $x\in\T^2$. 
An important feature of the projections is that 
\begin{equation}
 c_q \lesssim \sum_{k\in\cA_q} \frac{\sigma^2}{1+\norm{k}^2}
 \lesssim 2\sigma^2\int_{2^{q-1}}^{2^q} \frac{r\6r}{1+r^2}
 = \sigma^2\log\biggpar{\frac{1+2^{2q}}{1+2^{2(q-1)}}} \leqs 2\sigma^2\log2
\end{equation} 
for all $q$. Finally note that the cut-off condition $\abs{k}\leqs N$ implies 
$q\leqs \ell_N = \lfloor \log_2N \rfloor$, and that 
\begin{equation}
 C_N = \sum_{q=0}^{\ell_N}c_q\;.
\end{equation}
With these notations in place, we can introduce the binomial formula for 
Hermite polynomials, see Lemma~\ref{lem:hermite_multinomial} in 
Appendix~\ref{app:Wick}. 

\begin{lemma}
\label{binomial_formula} 
For any $m\in\N$, we have 
\begin{equation}
H_m(\psi(t,\cdot); C_N)
=\sum_{\abs{\hbn}=m}\frac{m!}{\hbn!}\prod_{q\geqs0}H_{\hbn_q}(\delta_q\psi(t,
\cdot),c_q)\;.
\end{equation} 
\end{lemma}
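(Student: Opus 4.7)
The plan is to derive the formula as an immediate consequence of the algebraic multinomial identity for Hermite polynomials recorded as Lemma~\ref{lem:hermite_multinomial} in Appendix~\ref{app:Wick}. That identity states that, whenever a variable $X$ and a parameter $V$ admit decompositions $X = \sum_{q\geqs0} X_q$ and $V = \sum_{q\geqs0} V_q$ into finitely many non-zero terms, one has
\begin{equation}
 H_m(X, V) = \sum_{\abs{\hbn}=m} \frac{m!}{\hbn!} \prod_{q\geqs0} H_{\hbn_q}(X_q, V_q)\;.
\end{equation}
As carried out in the appendix, this is proved by equating coefficients of $t^m$ on the two sides of the generating-function identity
\begin{equation}
 \e^{tX - Vt^2/2} = \prod_{q\geqs0} \e^{tX_q - V_q t^2/2}\;,
\end{equation}
which is itself an immediate consequence of the assumed decompositions of $X$ and $V$.

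To apply this identity in the present setting, I would rely on two simple structural observations. First, since the annuli $\set{\cA_q}_{q\geqs0}$ partition $\Z^2$ and the Galerkin cut-off restricts attention to Fourier modes with $\abs{k}\leqs N$, one has the finite decomposition
\begin{equation}
 \psi(t,\cdot) = \sum_{q=0}^{\ell_N}\delta_q\psi(t,\cdot)\;,
\end{equation}
which holds pointwise in $x\in\T^2$. Second, the very definition of $c_q$ and $C_N$ as given just above yields directly $C_N = \sum_{q=0}^{\ell_N} c_q$.

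Substituting $X_q = \delta_q\psi(t,x)$ and $V_q = c_q$ into the multinomial identity, pointwise in $x\in\T^2$, then produces exactly the claimed formula. The sum over multi-indices with $\abs{\hbn}=m$ reduces effectively to those supported in $\set{0,\dots,\ell_N}$, since any $\hbn$ with $\hbn_q\geqs1$ for some $q>\ell_N$ contributes a vanishing factor $H_{\hbn_q}(0,0)$. No real obstacle is expected: once the multinomial identity is granted from the appendix, the lemma amounts to matching notation, and the only substantive point is the compatibility of the renormalisation constant $C_N$ with the Littlewood-Paley variance decomposition, which is immediate from the definitions.
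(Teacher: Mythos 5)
Your proof is correct and follows essentially the same route as the paper: the lemma is obtained by applying the multinomial formula of Lemma~\ref{lem:hermite_multinomial} pointwise in $x$ with $x_q = \delta_q\psi(t,x)$ and $a_q^2 = c_q$, using the decomposition $\psi(t,\cdot)=\sum_q\delta_q\psi(t,\cdot)$ and the identity $C_N=\sum_{q=0}^{\ell_N}c_q$. Your remark that multi-indices charging $q>\ell_N$ drop out because $H_{\hbn_q}(0;0)=0$ is a correct and harmless addition.
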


It thus follows from~\eqref{eq:bound_Besovnorm_1} that we have 
\begin{align}
P_m(h)
\leqs{}&\sum_{q_0\geqs0}\Bigprob{\sup_{0\leqs t\leqs T}\bignorm{\delta_{q_0}(\sum_{\abs{\hbn}=m}\frac{m!}{\hbn!}\prod_{q\geqs0}\Wick{\delta_q\psi(t,\cdot)^{\hbn_q}})}_{L^2}>h^m2^{\abs{\alpha} q_0}}\;\\
={}&\sum_{q_0\geqs0}\Bigprob{\sup_{0\leqs t\leqs T}\bignorm{\sum_{\abs{\hbn}=m}\frac{m!}{\hbn!}\delta_{q_0}(\prod_{q\geqs0}\Wick{\delta_q\psi(t,\cdot)^{\hbn_q}})}_{L^2}>h^m2^{\abs{\alpha} q_0}}\;\\
\leqs{}&\sum_{q_0\geqs0}\Bigprob{\sup_{0\leqs t\leqs T}\sum_{\abs{\hbn}=m}\frac{m!}{\hbn!}\bignorm{\delta_{q_0}(\prod_{q\geqs0}\Wick{\delta_q\psi(t,\cdot)^{\hbn_q}})}_{L^2}>h^m2^{\abs{\alpha} q_0}}\;.
\end{align}

\begin{remark}
\label{rem:q0} 
Note that for any $q_1,\ q_2\geqs0$, one has $2^{q_1}+2^{q_2}\leqs 
2^{\max\{q_1,q_2\}+1}$. Therefore, 
\begin{equation}
\delta_{q_0}(\prod_{q\geqs0}\Wick{\delta_q\psi(t,\cdot)^{\hbn_q}})\ne0 
\qquad \Rightarrow \qquad 
q_0\leqs \max_{i\leqs\q}\set{q_i+\hbn_{q_i}}
\leqs \q+\hbn_{\q}
\end{equation}
for any $\hbn$, which will be useful in restricting the domains of the sums. 
\end{remark}

For any decomposition $h^m=\displaystyle\sum_{\abs{\hbn}=m}h_\hbn^m$, one has
\begin{equation}
P_m(h) 
\leqs \sum_{q_0\geqs0}\sum_{\abs{\hbn}=m}
\Bigprob{\sup_{0\leqs t\leqs T}
\bignorm{\delta_{q_0}(\prod_{q\geqs0}
\Wick{\delta_q\psi(t,\cdot)^{\hbn_q}})}_{L^2}>\frac{\hbn!}{m!}h_\hbn^m2^{\abs{
\alpha} q_0}}\;.
\label{eq:bound_Besovnorm_2} 
\end{equation}

%%%%%%%%%%%%%%%%%%%%%%%%%%%%%%%%%%%%%%%%%%%%%%%%%%%%%%%%%%%%%%%%%%%%%%%%%%%%%%%%%%%%%%%%%%%%%%%%%%%%%%%%%

\subsection{Martingale and partition}
\label{ssec:martingale}

In this section, we fix $q_0\geqs0$ and $\smash{\hbn\in\N^\N}$ with $\abs{\hbn} 
= m$. Our aim is to estimate one term in the double 
sum~\eqref{eq:bound_Besovnorm_2}. We notice that the stochastic integral 
$\psi_k(t)$ is not a martingale. However, 
\begin{equation}
\label{eq:psi_hat}
\e^{-\alpha_k(t)/\eps}\psi_k(t) =   \psi_k(0) + 
\frac{\sigma}{\sqrt{\eps}}\int_0^t \e^{-\alpha_k(t_1)/\eps} \6W_k(t_1)
\end{equation} 
is a martingale of variance $\e^{-2\alpha_k(t)/\eps}v_k$. 
The variances of $\psi_k(t)$ and $\hat\psi_k(t)$ are too different on the whole 
time interval $[0,T]$ to allow a useful comparison of the two processes. This 
is why we introduce a partition $0=u_0\leqs u_1<\cdots<u_L=T$ of this 
interval. Given $\gamma_0>0$ and any $k_0\in\Z^2$ such that $\abs{k_0}=2^{\q}$, 
we define the partition by 
\begin{equation}
\label{eq:def_partition} 
 \alpha_{k_0}(u_{l+1},u_l)=-\gamma_0\eps
 \qquad \text{for }
 1\leqs l\leqs L =
\biggl\lfloor\frac{((2\pi)^2\norm{k_0}^2 + 1)T}{\gamma_0\eps}
\biggr\rfloor\;,
\end{equation} 
and write $I_l = [u_l, u_{l+1}]$. 
Multiplying~\eqref{eq:psi_hat} by $\e^{\alpha_k(u_{l+1})/\eps}$,  
we obtain the martingale
\begin{equation}
\label{eq:psi_k_hat} 
\hat\psi_k(t)
:= \e^{\alpha_k(u_{l+1},t)/\eps} \psi_k(t) =   
\e^{\alpha_k(u_{l+1})/\eps}\psi_k(0) + \frac{\sigma}{\sqrt{\eps}}\int_0^t 
\e^{\alpha_k(u_{l+1},t_1)/\eps} \6W_k(t_1)\;,
\end{equation} 
where we do not indicate the $l$-dependence of $\hat\psi_k(t)$ in order not 
to overload the notations. The variance of $\hat\psi_k(t)$ is 
\begin{equation}
\label{eq:v_k_hat} 
 \hat v_k(t) = v_k \e^{2\alpha_k(u_{l+1},t)/\eps}\;.
\end{equation}
They key observation is the following property of Hermite polynomials, which is 
proved in Appendix~\ref{app:Wick}. 

\begin{lemma}
\label{lem:Hermite_martingale} 
For any $m\geqs1$, $\set{H_m(\hat\psi_k(t); \hat v_k(t))}_{t\geqs0}$ is a 
martingale with respect to the canonical filtration $\set{\cF_t}_t$ of 
$(W_k(t))_t$.
\end{lemma}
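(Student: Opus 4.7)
The plan is to verify the martingale property via Itô's formula, exploiting the well-known backward-heat identity satisfied by Hermite polynomials. Fix $k$ and $l$, so that the processes $\hat\psi_k(t)$ and $\hat v_k(t)$ are given by~\eqref{eq:psi_k_hat} and~\eqref{eq:v_k_hat}. From the explicit Itô representation~\eqref{eq:psi_k_hat}, $\hat\psi_k$ is a continuous martingale with stochastic differential
\begin{equation}
 \6\hat\psi_k(t) = \frac{\sigma}{\sqrt{\eps}} \e^{\alpha_k(u_{l+1},t)/\eps} \6W_k(t)\;,
\end{equation}
and quadratic variation $\6\langle\hat\psi_k\rangle_t = \frac{\sigma^2}{\eps}\e^{2\alpha_k(u_{l+1},t)/\eps}\6t$. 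A direct computation, using $\partial_t\alpha_k(u_{l+1},t) = \mu_k+1$ and $v_k = \sigma^2/[2(\mu_k+1)]$, gives $\6\hat v_k(t) = \6\langle\hat\psi_k\rangle_t$. This identification of the deterministic variance with the quadratic variation is the only place where the precise choice of the transformation~\eqref{eq:psi_k_hat} is used.

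Next I would recall the key analytic fact: Hermite polynomials satisfy the backward heat equation
\begin{equation}
 \partial_v H_m(x;v) + \tfrac12 \partial_x^2 H_m(x;v) = 0\;,
\end{equation}
which follows at once from differentiating the generating function $\sum_m \frac{t^m}{m!}H_m(x;v) = \e^{tx - vt^2/2}$, or equivalently by induction from the defining recursion of the $H_m(\cdot;v)$. Applying Itô's formula to the $\cC^\infty$ function $(x,v)\mapsto H_m(x;v)$ composed with the semimartingales $\hat\psi_k(t)$ and $\hat v_k(t)$ yields
\begin{equation}
 \6 H_m(\hat\psi_k(t);\hat v_k(t))
 = \partial_x H_m \, \6\hat\psi_k(t)
 + \Bigbrak{\partial_v H_m + \tfrac12 \partial_x^2 H_m}\6\hat v_k(t)\;,
\end{equation}
where I used $\6\hat v_k(t) = \6\langle\hat\psi_k\rangle_t$ to combine the last two terms. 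The bracketed drift vanishes identically, leaving only a stochastic integral driven by $\6W_k$, which shows that $H_m(\hat\psi_k(t);\hat v_k(t))$ is a local martingale with respect to $\set{\cF_t}_t$.

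Finally, to upgrade from local to true martingale I would check that, on the bounded interval $[0,T]$, the process is genuinely integrable: $\hat\psi_k(t)$ is Gaussian with bounded variance, so all moments of $H_m(\hat\psi_k(t);\hat v_k(t))$ are bounded uniformly in $t$, which in particular controls the expected quadratic variation of the stochastic integral and produces a true square-integrable martingale. The only mild subtlety -- and the one place I would be careful -- is in the bookkeeping that ensures $\6\hat v_k(t) = \6\langle\hat\psi_k\rangle_t$ rather than merely $\hat v_k(t) - \hat v_k(0) = \langle\hat\psi_k\rangle_t - \langle\hat\psi_k\rangle_0$ up to an additive constant; but since both sides are absolutely continuous in $t$ with the same density $\frac{\sigma^2}{\eps}\e^{2\alpha_k(u_{l+1},t)/\eps}$, the differential identity does hold, and this is exactly what makes the drift in Itô's formula vanish.
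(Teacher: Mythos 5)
Your proof is correct, but it follows a genuinely different route from the paper's. The paper argues algebraically: it writes $\hat\psi_k(t)=\hat\psi_k(s)+(\hat\psi_k(t)-\hat\psi_k(s))$, expands $H_m$ of this sum with the binomial formula for Hermite polynomials (Lemma~\ref{lem:hermite_binomial}) together with additivity of the variances, and then takes conditional expectations, using that the increment is independent of $\cF_s$ and that $\E[H_j(X;\variance(X))]=0$ for $j\geqs1$, so that only the term $H_m(\hat\psi_k(s);\hat v_k(s))$ survives. You instead use It\^o's formula and the backward heat equation $\partial_v H_m+\tfrac12\partial_x^2H_m=0$, reducing everything to the identity $\6\hat v_k(t)=\6\langle\hat\psi_k\rangle_t$, which you verify correctly (both densities equal $\frac{\sigma^2}{\eps}\e^{2\alpha_k(u_{l+1},t)/\eps}$); note that this is the same computation underlying the paper's \lq\lq additivity of the variances\rq\rq, since the variance of the increment over $[s,t]$ equals $\hat v_k(t)-\hat v_k(s)$. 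Each approach buys something: the paper's proof avoids stochastic calculus entirely, requires no local-to-true-martingale upgrade, and reuses the binomial/multinomial machinery needed elsewhere in the paper; yours is more structural, showing that only the matching of the differential of the variance parameter with the quadratic variation matters (so $H_m(M_t;\langle M\rangle_t+c)$ is a local martingale for any continuous martingale $M$ and any constant $c\geqs0$), and it yields the explicit representation of the Wick power as a stochastic integral, with integrability on compact intervals handled exactly as you say since $\hat\psi_k$ is Gaussian with locally bounded variance. Two conventions you tacitly share with the paper and which are harmless: the filtration must be understood as containing the $\cF_0$-measurable Gaussian initial condition $\psi_k(0)$, and for complex Fourier modes one argues on real and imaginary parts.
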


This observation will allow us to deal with the supremum over times 
in~\eqref{eq:bound_Besovnorm_2}, by using Doob's submartingale inequality. We 
will thus be interested in the martingales 
\begin{equation}
 \delta_q \hat\psi(t,x) 
 = \sum_{k\in\cA_q} \hat\psi_k(t) e_k(x)\;,
\end{equation} 
as well as of the related quantities 
\begin{equation}
 X_{\hbn}^2(t) 
= \bignorm{\delta_{q_0}(\prod_{q\geqs0}
\Wick{\delta_q\hat\psi(t,\cdot)^{\hbn_q}})}^2_{L^2}\;.
\end{equation} 
Later on, we will extend the obtained bounds to functions of $\delta_q 
\psi(t,x)$. 

\begin {prop}
\label{prop:sup_Xn} 
Fix a constant $\gamma_{q_0,\q}\in\R$ and $l\in\set{0,\dots,L}$. Then the bound
\begin{equation}
\biggprob{\sup_{t\in I_l}
X_{\hbn}^2(t)
> \Bigpar{\frac{\hbn!}{m!}h_\hbn^m2^{\abs{\alpha} q_0}}^2}
\leqs C_{\hbn}(l,\eps)\exp\biggset{-\frac{H_{\hbn}(q_0,l)}{\sigma^2}}
\end{equation}
holds, where
\begin{align}
\label{eq:martingale_bounds} 
C_{\hbn}(l, \eps)
={}& \e^{m-1} {}+{} 
\biggexpec{\exp\biggset{\frac{\gamma_{q_0,\q}}{\sigma^2} 
\bigbrak{X_{\hbn}^2(u_{l+1})}^{1/m}}}\;,\\
H_{\hbn}(q_0,l)
={}& \gamma_{q_0,\q}
\biggpar{\frac{\hbn!}{m!}2^{\abs{\alpha} q_0}h_\hbn^m}^{2/m}\;.
\end{align}
\end{prop}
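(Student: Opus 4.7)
The plan is to exploit the martingale property introduced by the change of variables $\psi_k(t)\mapsto\hat\psi_k(t)$ of~\eqref{eq:psi_k_hat} in order to apply Doob's maximal inequality to an exponential functional of the submartingale $X_\hbn^2(t)$, after a small convexification step to deal with the fact that $y\mapsto y^{1/m}$ is concave for $m\geqs 2$.

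The first step is to show that $X_\hbn^2(t)$ is a non-negative submartingale on $I_l$. Since the Wiener processes $\set{W_k}_{k\in\Z^2}$ are independent and $\delta_q\hat\psi(t,x)=\sum_{k\in\cA_q}\hat\psi_k(t)e_k(x)$ is a real martingale driven by $\set{W_k}_{k\in\cA_q}$, whose quadratic variation is $\6[\delta_q\hat\psi(\cdot,x)](t)=\6\hat c_q(t)$ with $\hat c_q(t)=\sum_{k\in\cA_q}\hat v_k(t)$ (a direct computation using $\E[\6W_{k_1}\6W_{k_2}]=\delta_{k_1,-k_2}\6t$), the same Itô argument as in Lemma~\ref{lem:Hermite_martingale}, based on the identity $\partial_v H_m+\frac12\partial_z^2 H_m=0$, shows that $H_{\hbn_q}(\delta_q\hat\psi(t,x);\hat c_q(t))$ is a martingale in $t$ for every fixed $x\in\T^2$. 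Because the factors for distinct $q$ depend on disjoint independent families of Wiener processes, the product $\prod_q H_{\hbn_q}(\delta_q\hat\psi(t,\cdot);\hat c_q(t))$ is a pointwise (in $x$) martingale in $t$, and the linear Fourier projection $\delta_{q_0}$ produces an $L^2(\T^2)$-valued martingale $N(t)$. Consequently $X_\hbn^2(t)=\norm{N(t)}_{L^2}^2$ is a non-negative submartingale on $I_l$.

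Second, I would introduce a convex, nondecreasing surrogate for the function $f(y)=\exp(\gamma_{q_0,\q}y^{1/m}/\sigma^2)$. A direct calculation shows that $f$ is convex on $[y_0,\infty)$ but concave on $[0,y_0]$, where $y_0=((m-1)\sigma^2/\gamma_{q_0,\q})^m$ and $f(y_0)=\e^{m-1}$. Let $\tilde f$ equal $f$ on $[y_0,\infty)$ and the tangent line to $f$ at $y_0$ on $[0,y_0]$. Then $\tilde f$ is convex, nondecreasing and positive, and obeys $\tilde f(y)\leqs\e^{m-1}+f(y)$ (since $\tilde f\leqs f(y_0)=\e^{m-1}$ on $[0,y_0]$, and $\tilde f=f$ on $[y_0,\infty)$). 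Consequently $\tilde f(X_\hbn^2(t))$ is a submartingale on $I_l$, and Doob's maximal inequality, together with the monotonicity inclusion $\set{\sup_t X_\hbn^2>R}\subseteq\set{\sup_t\tilde f(X_\hbn^2)>\tilde f(R)}$, gives, for $R=(\hbn!h_\hbn^m 2^{\abs{\alpha}q_0}/m!)^2$ in the regime $R\geqs y_0$,
\begin{equation*}
 \biggprob{\sup_{t\in I_l}X_\hbn^2(t)>R}
 \leqs \frac{\expec{\tilde f(X_\hbn^2(u_{l+1}))}}{\tilde f(R)}
 \leqs \frac{\e^{m-1}+\expec{f(X_\hbn^2(u_{l+1}))}}{f(R)}\;,
\end{equation*}
where the last step uses $\tilde f(R)=f(R)=\exp(H_\hbn(q_0,l)/\sigma^2)$. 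This matches the claimed bound. In the complementary regime $R<y_0$, one has $H_\hbn(q_0,l)/\sigma^2<m-1$, so the right-hand side of the statement already exceeds $1$ and the assertion is trivial.

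The main obstacle is the careful justification of the $L^2$-valued martingale property of $N(t)$: one needs sufficient integrability of the product of Hermite factors in order to conclude that the pointwise local martingale is a genuine martingale whose squared $L^2$-norm is a submartingale to which Doob's inequality applies. This integrability follows from hypercontractivity in the Gaussian Wiener chaos, which controls all moments of each factor $H_{\hbn_q}(\delta_q\hat\psi;\hat c_q)$ in terms of its $L^2$-norm and degree $\hbn_q$. The convexification of $f$ is otherwise a standard device for extracting exponential tails when the natural exponent $y^{1/m}$ is concave.
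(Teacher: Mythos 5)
Your proposal is correct and takes essentially the same route as the paper: one first notes that $X_{\hbn}^2(t)$ is a submartingale on $I_l$ (projection of a product of independent Hermite martingales, squared $L^2$-norm), and then applies Doob's submartingale inequality to a convex, nondecreasing modification of $x\mapsto\exp\{\gamma_{q_0,\q}x^{1/m}/\sigma^2\}$, cut at the same threshold $x_0=((m-1)\sigma^2/\gamma_{q_0,\q})^m$, bounding the expectation by $\e^{m-1}$ plus the exponential moment and the denominator by the exponential itself. The only cosmetic difference is that the paper uses the majorant $\max\{\e^{m-1},\e^{\gamma x^{1/m}}\}$, which covers all thresholds at once, whereas you use the tangent-line convexification together with an explicit triviality check in the regime below $x_0$; both yield exactly the stated constants.
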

\begin {proof}
The process $(X_{\hbn}^2(t))_{t\in I_l}$ is a submartingale, because it is the 
projection of a sum of squares of independent martingales. We note that the 
function $f_\gamma:\R_+\to\R_+$ given by 
\begin{equation}
 f_\gamma(x) = \max\bigset{\e^{m-1}, \e^{\gamma x^{1/m}}}
 = 
 \begin{cases}
 \e^{m-1} & \text{if $x \leqs \biggpar{\dfrac{m-1}{\gamma}}^m$\;,} \\[10pt]
 \e^{\gamma x^{1/m}} &\text{if $x > \biggpar{\dfrac{m-1}{\gamma}}^m$}
 \end{cases}
\end{equation} 
is non-decreasing and convex. 
By Doob's submartingale inequality, we get
\begin{align}
\biggprob{\sup_{t\in I_l}
X_{\hbn}^2(t)
> \Bigpar{\frac{\hbn!}{m!}h_\hbn^m2^{\abs{\alpha} q_0}}^2}
&= \biggprob{\sup_{t\in I_l} f_\gamma\bigpar{X_{\hbn}^2(t)}
>f_\gamma\biggpar{\Bigpar{\frac{\hbn!}{m!}h_\hbn^m2^{\abs{\alpha} q_0}}^2}} \\
&\leqs \frac{1}{f_\gamma\Bigpar{\bigpar{\frac{\hbn!}{m!}h_\hbn^m2^{\abs{\alpha} 
q_0}}^2}}
\Bigexpec{f_\gamma\bigpar{X_{\hbn}^2(u_{l+1})}}\;.
\end{align}
In the denominator, we bound $f_\gamma(x)$ below by $\e^{\gamma x^{1/m}}$. 
In the expectation, we bound the maximum defining $f_\gamma$ above by the sum. 
Setting $\gamma = \gamma_{q_0,\q}/\sigma^2$ yields the result. 
\end{proof}

\begin{remark}
Corollary 6.13 in \cite{Janson_book} implies that if $X$ is a polynomial of 
degree $m$ in the field, then $\E\bigbrak{\e^{t\abs{X}}}$ is finite for $m=2$,  
and is in general infinite if $m\geqs3$. This explains the $m$th 
root in Proposition~\ref{prop:sup_Xn}.
\end{remark}

In order to bound the exponential moment in~\eqref{eq:martingale_bounds}, we 
provide the following technical lemma, whose proof is postponed to 
Appendix~\ref{appendix_B}.

\begin {lemma}
\label{lemma:Xq4}
There exists a numerical constant $C_0\geqs0$ such that for any $l$, one has
\begin{equation}
\label{bound_Xq2}
\E\bigbrak{X_{\hbn}^2(u_{l+1})} 
\leqs C_m\sigma^{2m}\frac{2^{2q_0}}{2^{2\q}} 
\end{equation}
where $C_m = C_0^m m!$. 
\end{lemma}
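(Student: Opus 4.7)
The plan is to exploit the identity $\alpha_k(u_{l+1},u_{l+1})=0$, which gives $\hat\psi_k(u_{l+1})=\psi_k(u_{l+1})$ and reduces the problem to a direct computation with the stationary Gaussian field $\psi(u_{l+1},\cdot)$, whose Fourier modes are independent centred Gaussians of variance $v_k$. Since the fields $\delta_q\psi$ associated with distinct annuli $\cA_q$ involve disjoint sets of Fourier modes, they are mutually independent Gaussian fields, so the function
\begin{equation*}
g(x):=\prod_{q\geqs0}\Wick{\delta_q\psi(u_{l+1},x)^{\hbn_q}}
\end{equation*}
is the product of independent Wick powers of total degree $m$, and hence lies in the $m$th homogeneous Wiener chaos.

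By Parseval, $\E\bigbrak{X_\hbn^2(u_{l+1})}=\sum_{n\in\cA_{q_0}}\E\bigbrak{\abs{\hat g(n)}^2}$, so it suffices to compute the two-point function $\E[g(x)g(y)]$. For each $q$, the Hermite orthogonality identity $\E\bigbrak{\Wick{G^{\hbn_q}}\Wick{H^{\hbn_q}}}=\hbn_q!\,\E[GH]^{\hbn_q}$ applied to the jointly Gaussian pair $(\delta_q\psi(x),\delta_q\psi(y))$ produces $\hbn_q!\,K_q(x-y)^{\hbn_q}$, where $K_q(z):=\sum_{k\in\cA_q}v_k e_k(z)$ is the covariance kernel of $\delta_q\psi$. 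Combining multiplicatively across the independent indices $q$, I obtain
\begin{equation*}
\E\bigbrak{g(x)g(y)}=\hbn!\prod_{q\geqs0}K_q(x-y)^{\hbn_q}.
\end{equation*}
Since the Fourier transform converts products into convolutions, and $\hat K_q(\kappa)=v_\kappa\mathbbm{1}_{\kappa\in\cA_q}$, this yields the closed form
\begin{equation*}
\E\bigbrak{X_\hbn^2(u_{l+1})}
=\hbn!\sum_{\substack{k_{q,j}\in\cA_q\\ \sum_{q,j}k_{q,j}\in\cA_{q_0}}}\prod_{q,j}v_{k_{q,j}}.
\end{equation*}

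The remaining step is a combinatorial estimate. From $v_k\leqs C\sigma^2 2^{-2q}$ for $k\in\cA_q$, the product of variances is bounded by $C^m\sigma^{2m}2^{-2\sum_q q\hbn_q}$. For the counting, I single out the distinguished index $(\q,1)$, which is available because $\hbn_\q\geqs1$: for any fixed choice of the other $m-1$ components, the constraint forces $k_{\q,1}\in\cA_\q\cap(\cA_{q_0}-R)$, where $R$ is the sum of the others, yielding at most $\abs{\cA_{q_0}}\leqs C2^{2q_0}$ choices; the remaining components contribute at most $\prod_q\abs{\cA_q}^{\hbn_q}/\abs{\cA_\q}\leqs C^m 2^{2\sum_q q\hbn_q-2\q}$ configurations. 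The factors $2^{\pm2\sum_q q\hbn_q}$ cancel, producing
\begin{equation*}
\E\bigbrak{X_\hbn^2(u_{l+1})}
\leqs C_0^m\hbn!\,\sigma^{2m}\frac{2^{2q_0}}{2^{2\q}}
\leqs C_0^m m!\,\sigma^{2m}\frac{2^{2q_0}}{2^{2\q}},
\end{equation*}
using $\hbn!\leqs m!$.

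The main obstacle is really the bookkeeping in the Gaussian moment calculation: one must handle the reality constraint $\psi_{-k}=\overline{\psi_k}$ carefully in applying the Hermite orthogonality formula with complex Fourier modes, and correctly account for the combinatorial factor $\hbn!$ that emerges from independence across annuli. The degenerate case $\q=0$ (which forces $\hbn_0=m$, $\sum k=0$, and hence $q_0=0$) is a direct verification.
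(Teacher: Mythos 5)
Your proof is correct, and it is worth separating its two halves. The exact computation — Hermite orthogonality $\E\bigbrak{\Wick{G^{n}}\Wick{H^{n}}}=n!\,\E[GH]^{n}$ applied annulus by annulus, independence of the $\delta_q\psi$ across annuli, and Parseval, leading to $\E\bigbrak{X_{\hbn}^2(u_{l+1})}=\hbn!\sum\prod_{q,j}v_{k_{q,j}}$ over tuples $k_{q,j}\in\cA_q$ with $\sum_{q,j}k_{q,j}\in\cA_{q_0}$ — is exactly the paper's Lemma~\ref{Xq4}, except that you first use $\hat\psi_k(u_{l+1})=\psi_k(u_{l+1})$, $\hat v_k(u_{l+1})=v_k$, a legitimate simplification since the statement only concerns $t=u_{l+1}$ (the paper keeps $\hat v_k(t)$ and gets the formula on all of $I_l$). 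Where you genuinely diverge is the estimate of the constrained sum. The paper's Lemma~\ref{Xq2} bounds $\hat v_k\lesssim\sigma^2/(1+\norm{k}^2)$, eliminates the constrained variable at the lowest occupied frequency, uses the reverse triangle inequality to transfer the corresponding decay onto the top frequency $\q$, and then evaluates the remaining free sums by Riemann-sum comparisons ($\sum_{\cA_{q_j}}\norm{k}^{-2}\asymp 1$ and $\sum_{\cA_{\q}}\norm{k}^{-4}\asymp 2^{-2\q}$). You instead take the uniform bound $v_k\lesssim\sigma^2 2^{-2q}$ on each annulus and do a pure lattice-point count, pinning the constrained variable in the top annulus: at most $\abs{\cA_{q_0}}\lesssim 2^{2q_0}$ admissible values of $k_{\q,1}$ for each choice of the other $m-1$ modes, while the free configurations number $\prod_q\abs{\cA_q}^{\hbn_q}/\abs{\cA_{\q}}\lesssim C^m 2^{2\sum_q q\hbn_q-2\q}$, so the factors $2^{\pm2\sum_q q\hbn_q}$ cancel and the same $C_0^m\,\hbn!\,\sigma^{2m}2^{2(q_0-\q)}$ emerges. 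Both routes are elementary and yield the same constant structure $C_0^m\hbn!\leqs C_0^m m!$; yours avoids the triangle-inequality manipulation and makes the source of the ratio $2^{2q_0}/2^{2\q}$ (one constrained mode forced into correspondence with $\cA_{q_0}$ at the cost of one free factor $\abs{\cA_{\q}}$) more transparent, while the paper's version is stated for general $t\in I_l$ — though your argument extends there too, since $\hat v_k(t)\leqs v_k$ on $I_l$. Your closing remarks on the reality constraint and the degenerate case $\q=0$ address the only points that need care.
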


The bound~\eqref{bound_Xq2} says that although high frequency modes, of order 
$2^{\q}$, have some influence on lower modes of order $2^{q_0}$, this influence 
decreases exponentially in their ratio. 

\begin{prop}
\label{bound_expec}
For any $\gamma_{q_0,\q} < \e^{-1}(C_m)^{-1/m}2^{2(\q-q_0)/m}$, one has
\begin{equation}
\biggexpec{\exp\biggset{\frac{\gamma_{q_0,\q}}{\sigma^2} 
\bigbrak{X_{\hbn}^2(u_{l+1})}^{1/m}}}
<\frac{1}{1-\gamma_{q_0,\q} \e C_m^{1/m}2^{2(q_0-\q)/m}}\;.
\end{equation}
\end{prop}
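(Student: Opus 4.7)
The plan is to Taylor-expand the exponential, bound each resulting moment of $X_{\hbn}^2$ via Nelson's hypercontractivity inequality, insert the variance estimate from Lemma~\ref{lemma:Xq4}, and sum a geometric series using Stirling's formula.

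First, I would write
\begin{equation*}
\biggexpec{\exp\biggset{\frac{\gamma_{q_0,\q}}{\sigma^2}
\bigbrak{X_{\hbn}^2(u_{l+1})}^{1/m}}}
= \sum_{k\geqs 0}\frac{1}{k!}\biggpar{\frac{\gamma_{q_0,\q}}{\sigma^2}}^k
\bigexpec{X_{\hbn}^2(u_{l+1})^{k/m}}\,.
\end{equation*}
The key observation is that the $L^2(\T^2)$-valued random variable
\begin{equation*}
Y(t):=\delta_{q_0}\biggpar{\prod_{q\geqs 0}\Wick{\delta_q\hat\psi(t,\cdot)^{\hbn_q}}}
\end{equation*}
belongs, for each fixed $t$, to the $m$-th inhomogeneous Wiener chaos: the Wick product of the independent Hermite polynomials $\Wick{\delta_q\hat\psi^{\hbn_q}}$ (of degrees $\hbn_q$ summing to $\abs{\hbn}=m$) lies in the $m$-th chaos, and the Littlewood--Paley projector $\delta_{q_0}$ preserves chaos order. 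The vector-valued form of Nelson's hypercontractivity estimate, obtained by tensoring the Ornstein--Uhlenbeck semigroup on Gaussian space with the identity on $L^2(\T^2)$, then yields
\begin{equation*}
\bigexpec{\|Y(t)\|_{L^2(\T^2)}^p}^{1/p}
\leqs (p-1)^{m/2}\,\bigexpec{\|Y(t)\|_{L^2(\T^2)}^2}^{1/2}
\qquad\forall p\geqs 2\,.
\end{equation*}
Setting $p=2k/m$ for $k\geqs m$, and using Jensen's inequality (concavity of $x\mapsto x^{k/m}$ on $\R_+$) for $0\leqs k<m$, one obtains $\bigexpec{(X_{\hbn}^2)^{k/m}}\leqs \rho_m(k)\bigpar{\E[X_{\hbn}^2]}^{k/m}$, with $\rho_m(k)=1$ for $k\leqs m$ and $\rho_m(k)=(2k/m-1)^k$ for $k>m$.

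Next I would insert Lemma~\ref{lemma:Xq4}, which gives $\E[X_{\hbn}^2(u_{l+1})]\leqs C_m\sigma^{2m}2^{2(q_0-\q)}$. Writing $A:=\gamma_{q_0,\q}C_m^{1/m}2^{2(q_0-\q)/m}$, each term of the series is bounded by $\rho_m(k)A^k/k!$, and Stirling's lower bound $k!\geqs (k/e)^k$ together with the elementary estimate $2k/m-1\leqs k$ (trivial for $m\geqs 2$; the degenerate case $m=1$ can be handled directly via the explicit MGF of a sum of squared Gaussians, since $Y$ is then itself Gaussian) yields $\rho_m(k)/k!\leqs e^k$ for every $k\geqs 0$. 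Summing the geometric series $\sum_{k\geqs 0}(eA)^k$ under the standing hypothesis $eA<1$, i.e., $\gamma_{q_0,\q}<e^{-1}C_m^{-1/m}2^{2(\q-q_0)/m}$, produces the claimed bound $1/(1-eA)$.

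The main technical obstacle is extracting the precise constant $e$ in the final denominator, rather than a weaker factor such as $2e/m$ that a crude application of Stirling to $(2k/m)^k$ would produce: this requires a sharper comparison of $(2k/m-1)^k$ with $e^k k!$ that exploits the cancellation between the numerator and the factorial. Note moreover that the root $1/m$ inside the exponential is essential, since (as pointed out in the remark after Proposition~\ref{prop:sup_Xn}) the exponential moment of $X_{\hbn}^2$ itself would already be infinite for $m\geqs 3$.
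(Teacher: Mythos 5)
Your proposal is correct and follows essentially the same route as the paper's proof: expand the exponential, control the fractional moments of $X_{\hbn}^2(u_{l+1})$ by Nelson-type hypercontractivity together with the second-moment bound of Lemma~\ref{lemma:Xq4}, use Stirling to extract a factor $\e^k$, and sum the geometric series under the stated smallness condition on $\gamma_{q_0,\q}$. The only (harmless) deviations are that you apply the Hilbert-space-valued form of the hypercontractive estimate directly at the fractional exponent $p=2k/m$, where the paper instead uses Jensen's inequality followed by the scalar equivalence of moments for even integer powers (and Cauchy--Schwarz for odd ones), and that you treat $m=1$ separately via the explicit Gaussian MGF --- a sound, though only sketched, patch for the one case in which your elementary bound $(2k/m-1)^k\leqs \e^k k!$ would fail.
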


\begin {proof}
Expanding the exponential, we get
\begin{equation}
\biggexpec{\exp\biggset{\frac{\gamma_{q_0,\q}}{\sigma^2} 
\bigbrak{X_{\hbn}^2(u_{l+1})}^{1/m}}}
=\sum_{p\geqs0}\frac{\gamma_{q_0,\q}^p}{\sigma^{2p}p!}
\E\Bigbrak{\bigpar{X_{\hbn}^2(u_{l+1})}^{p/m}}\;.
\end{equation}
By Jensen's inequality (or H\"older's inequality with conjugates $m$ and 
$\frac{m}{m-1}$), we have
\begin{equation}
\E\Bigbrak{\bigpar{X_{\hbn}^2(u_{l+1})}^{p/m}}
\leqs \E\Bigbrak{\bigpar{X_{\hbn}^2(u_{l+1})}^p}^{1/m}\;.
\end{equation}
Since $X_{\hbn}^2(u_{l+1})$ belongs to the $2m$th Wiener chaos, we can use for 
even $p$ equivalence of norms (see Lemma~\ref{lemma:Nelson}) to obtain the 
bound 
\begin{equation}
 \E\Bigbrak{\bigpar{X_{\hbn}^2(u_{l+1})}^p}
 \leqs (p-1)^{mp} \bigexpec{X_{\hbn}^2(u_{l+1})}^p 
 \leqs \Bigbrak{(p-1)^m C_m\sigma^{2m}2^{2(q_0-\q)}}^p\;,
\end{equation} 
where we have used Lemma~\ref{lemma:Xq4} in the last inequality. 
A similar bound follows for odd $p$ by the Cauchy--Schwarz inequality. 
Combining these bounds, we get
\begin{equation}
\biggexpec{\exp\biggset{\frac{\gamma_{q_0,\q}}{\sigma^2} 
\bigbrak{X_{\hbn}^2(u_{l+1})}^{1/m}}}
\leqs \sum_{p\geqs0} \frac{(p-1)^p}{p!}
\Bigbrak{\gamma_{q_0,\q}C_m^{1/m}2^{2(q_0-\q)/m}}^p\;.
\end{equation}
Stirling's formula yields $p^p/p! \leqs \e^p$. 
% \begin{equation}
% \frac{p^{p/m}}{p!}
% \leqs\frac{p^{p/m}}{p^{p+1/2}\e^{-p}}
% =\frac{\e^p}{p^{p-p/m+1/2}}
% =\frac{\e^p}{p^{(m-1)p/m+1/2}}<\e^p\;.
% \end{equation}
The result follows by summing a geometric series. 
\end{proof}

Choosing $\gamma_{q_0,\q}=(2\e C_m^{1/m})^{-1}2^{2(\q-q_0)/m}$, we obtain 
\begin{align}
 C_{\hbn}(l, \eps) &\leqs 2 + \e^{m-1}\;, \\
 H_{\hbn}(q_0,l) &= \frac{1}{2\e C_m^{1/m}} 
 \biggpar{\frac{\hbn!}{m!}2^{\abs{\alpha} q_0}2^{\q-q_0}h_\hbn^m}^{2/m}\;.
\end{align} 
This motivates the choice
\begin{equation}
h_\hbn^{m} = \frac{1}{K_m(q_0)} h^m \frac{m!}{\hbn!}\frac{1}{2^{(\q-q_0)/2}}
\indicator{\q + \hbn_{\q} \geqs q_0}\;,
\end{equation}
where the indicator is due to Remark~\ref{rem:q0}, which yields
\begin{equation}
\label{eq:bound_Il_1} 
 \biggprob{\sup_{t\in I_l}
X_{\hbn}^2(t)
> \Bigpar{\frac{\hbn!}{m!}h_\hbn^m2^{\abs{\alpha} q_0}}^2}
\leqs (2+\e^{m-1}) 
\exp\biggset{-\frac{h^2}{2\e\sigma^2} 
\biggpar{\frac{2^{(\q-q_0)/2} 
2^{\abs{\alpha}q_0}}{K_m(q_0)C_m^{1/2}}}^{2/m}}\;.
\end{equation}
The condition $h^m=\displaystyle\sum_{\abs{\hbn}=m}h_\hbn^m$ imposes 
\begin{equation}
\label{eq:Kmq0} 
K_m(q_0) = 
\sum_{\substack{\abs{\hbn}=m \\ \q + \hbn_{\q} \geqs q_0}}
\frac{m!}{\hbn!}
\frac{1}{2^{(\q-q_0)/2}}\;.
\end{equation}
The proof of the following bound is postponed to Appendix~\ref{appendix_B}. 

\begin{lemma}
\label{somme_ngras}
There exist numerical constants $c_0, c_1, c_2 > 0$ such that 
\begin{equation}
K_m(q_0)
\leqs c_0 m! (m + c_2)^m(q_0 + c_1)^m\;.
\end{equation}
\end{lemma}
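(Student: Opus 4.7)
The plan is to split the sum defining $K_m(q_0)$ according to whether $\q \geqs q_0$ or $\q < q_0$, and in each regime bound the contribution via the multinomial theorem followed by a geometric resummation.

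In the first regime, $\q \geqs q_0$, the constraint $\q + \hbn_{\q} \geqs q_0$ is automatic. I would group multiindices by the value $Q = \q$ and use the identity
\begin{equation}
\sum_{\substack{\abs{\hbn}=m \\ \q = Q}} \frac{m!}{\hbn!} = (Q+1)^m - Q^m \leqs m(Q+1)^{m-1},
\end{equation}
which counts, via the multinomial theorem, multiindices of total mass $m$ with support contained in $\set{0,\dots,Q}$ and containing $Q$. After the substitution $j = Q - q_0$ and the elementary inequality $q_0 + j + 1 \leqs (q_0+1)(j+1)$, this regime contributes at most $m(q_0+1)^{m-1}\sum_{j \geqs 0}(j+1)^{m-1} 2^{-j/2}$. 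The remaining series is then controlled via $(j+1)^{m-1} \leqs (j+1)(j+2)\cdots(j+m-1) = (m-1)!\binom{j+m-1}{m-1}$ together with the generating identity $\sum_{j \geqs 0}\binom{j+m-1}{m-1} r^j = (1-r)^{-m}$ at $r = 2^{-1/2}$, producing a bound of order $m!\,(q_0+1)^{m-1}\,C_1^m$ with $C_1 = (1-2^{-1/2})^{-1}$.

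In the second regime, $\q < q_0$, the constraint $\q + \hbn_{\q} \geqs q_0$ forces $\hbn_{\q} \geqs q_0 - \q$, which together with $\hbn_{\q} \leqs m$ restricts $Q = \q$ to $\set{\max(0, q_0-m), \dots, q_0-1}$, at most $m$ values. For each such $Q$ I would bound the restricted inner sum crudely by the unrestricted $(Q+1)^m \leqs q_0^m$, then sum $2^{(q_0-Q)/2}$ geometrically to obtain a contribution of order $q_0^m \cdot 2^{m/2}$. Combining the two regimes and absorbing numerical constants produces an estimate of the required form $c_0\, m!\,(m+c_2)^m (q_0 + c_1)^m$ for suitable $c_0, c_1, c_2 > 0$.

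The only nontrivial ingredient is the multinomial-theorem identity used in the first regime to sum $m!/\hbn!$ over multiindices with prescribed maximum index $\q = Q$; the geometric resummation via the negative binomial series, and the crude bound in the second regime, are routine. The factor $(m+c_2)^m$ in the statement is considerably larger than the sharper $C_1^m$ coming out of the argument, so there is ample slack when consolidating the two regimes into the advertised form.
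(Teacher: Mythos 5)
Your proof is correct, but it follows a genuinely different route from the paper. You group multiindices by the top frequency $Q=\q$ and exploit the closed-form multinomial identity $\sum_{\abs{\hbn}=m,\,\q=Q} m!/\hbn! = (Q+1)^m - Q^m \leqs m(Q+1)^{m-1}$, then handle the regime $\q\geqs q_0$ by a negative binomial resummation and the regime $\q<q_0$ by noting that the constraint $\q+\hbn_{\q}\geqs q_0$ leaves at most $m$ admissible values of $Q$, each with weight at most $2^{m/2}$. All steps check out (the constraint is indeed automatic for $\q\geqs q_0$, dropping constraints only increases the positive sum, and $q_0+j+1\leqs(q_0+1)(j+1)$ is valid), and your bound $m!\,(q_0+1)^{m-1}C_1^m + O(q_0^m 2^{m/2})$ with $C_1=(1-2^{-1/2})^{-1}$ is in fact sharper in $m$ than the stated $(m+c_2)^m$, so the lemma follows with room to spare. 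The paper instead decomposes $K_m(q_0)=\sum_a S_m(a,0)$ according to the number $a=[\hbn]$ of nonzero components and runs a two-parameter induction ($S_m(a+1,b)\leqs\sum_{m_1}\binom{m}{m_1}S_{m_1}(a,b+1)$, seeded by a polylogarithm-type bound for $a=1$); that induction is what produces the factor $(m+c_2)^m$. The payoff of the paper's heavier scheme is reusability: the same induction is applied almost verbatim to $\bar K_{m,b}(q_0)$ in Lemma~\ref{lem:sum_hbn}, where the summand $\q^b 2^{(m+3)\q}\exp\{-\beta(m,q_0)2^{(\q-q_0)/m}\}$ replaces $2^{-(\q-q_0)/2}$. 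Since that summand also depends on $\hbn$ only through $\q$, your grouping-by-$\q$ device would actually adapt there too, so your argument is a legitimate, more elementary alternative with a better constant in $m$.
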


Substituting in~\eqref{eq:bound_Il_1} yields the bound 
\begin{equation}
\label{eq:bound_Il_2} 
 \biggprob{\sup_{t\in I_l}
X_{\hbn}^2(t)
> \Bigpar{\frac{h^m 2^{\abs{\alpha} q_0}}{K_m(q_0) 2^{(\q - q_0)/2}}}^2}
% > \Bigpar{\frac{\hbn!}{m!}h_\hbn^m2^{\abs{\alpha} q_0}}^2}
\leqs (2+\e^{m-1}) 
\exp\biggset{-\kappa_m
\frac{2^{(\q-q_0)/m} 2^{2\abs{\alpha}q_0/m}}
{(q_0+c_1)^2}\frac{h^2}{\sigma^2} }\;,
\end{equation}
where
\begin{equation}
 \kappa_m = \frac{1}{2\e c_0^{2/m}(C_m(m!)^2)^{1/m} (m+c_2)^2}
 = \biggOrder{\frac{1}{m^5}}\;.
\end{equation} 
We now have to convert the estimate~\eqref{eq:bound_Il_2} into an estimate 
involving Wick powers of $\delta_q\psi(t,\cdot)$ instead of 
$\delta_q\hat\psi(t,\cdot)$. For that, we are going to use the following, 
rather rough bound. 
For any $\hbl\in\N^\N$ with finitely many nonzero components, we write 
\begin{equation}
\abs{\hbl} := \sum_{q\geqs0}\hbl_q\;, \qquad 
\hbl!:=\prod_{q\geqs0}\hbl_q!\;,
\qquad\text{and}\qquad  
\hbl \leqs \lfloor \tfrac{\hbn}2 \rfloor 
\quad\Leftrightarrow\quad 
\hbl_q \leqs \lfloor \tfrac{\hbn_q}2 \rfloor \;\forall q\geqs0\;.
\end{equation}
We introduce the shorthands 
\begin{align}
\varphi(t,\cdot) 
&= \prod_{q\geqs0}H_{\hbn_q}(\delta_q\psi(t,\cdot); c_q)\;, \\
\hat\varphi(t,\cdot)  
&= \prod_{q\geqs0}H_{\hbn_q}(\delta_q\hat\psi(t,\cdot); \hat c_q(t))\;.
\end{align}
The proof of the following result is postponed to Appendix~\ref{appendix_B}. 

\begin{prop}
\label{prop:phi_hatphi} 
There is a numerical constant $c_1$ such that for all $t\in I_l$, one has 
\begin{multline}
\bignorm{\delta_{q_0}\bigpar{\varphi(t,\cdot)  
- \hat\varphi(t,\cdot)}}_{L^2}\;\\
\leqs 2^{q_0} \bigpar{c_1\gamma_0}^{[\hbn]} 
\Biggpar{\prod_{\substack{q\geqs0\\\hbn_q>0}} \hbn_q}
\sum_{\hbl:\hbl \leqs\lfloor\hbn/2\rfloor}A_{\hbn\hbl}\hat 
c(t)^{\abs{\hbl}}
\prod_{q\geqs0}\bignorm{\delta_q\hat\psi(t,\cdot)}^{\hbn_q-2\hbl_q}_{L^2}\;,
\qquad 
\label{eq:bound_norm_difference} 
\end{multline}
where 
\begin{equation}
\hat c(t) = \sup_{q\geqs0} \hat c_q(t)
\qquad\text{and}\qquad 
A_{\hbn\hbl} = \frac{\hbn!}{2^{\abs{\hbl}} \hbl! (\hbn-2\hbl)!}
2^{2\q(\abs{\hbn} - 2\abs{\hbl})}\;.
\end{equation}
Note that the first product over $q$ in~\eqref{eq:bound_norm_difference} can be 
bounded above by $m^{[\hbn]}$. 
\end{prop}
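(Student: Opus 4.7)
The plan is to combine Bernstein's inequality for the Littlewood--Paley block $\delta_{q_0}$, the explicit expansion of Hermite polynomials, a double telescoping decomposition over the active Fourier blocks, and an iterated Cauchy--Schwarz estimate. The prefactor $2^{q_0}$ in~\eqref{eq:bound_norm_difference} would come from Bernstein's inequality: since $\delta_{q_0}f$ has Fourier support in the annulus $\cA_{q_0}$, one has $\norm{\delta_{q_0}f}_{L^2}\lesssim 2^{q_0}\norm{f}_{L^1}$ (the exponent being $d/2=1$ in dimension $d=2$), reducing the task to an $L^1$ bound for $\varphi-\hat\varphi$.

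To handle the $L^1$ norm, I would use the closed form
\[
H_m(x;v)=\sum_{j=0}^{\lfloor m/2\rfloor}(-1)^j\frac{m!}{2^j\,j!(m-2j)!}\,v^j x^{m-2j}
\]
to expand each Hermite polynomial, producing
\[
\varphi-\hat\varphi
=\sum_{\hbl\leqs\lfloor\hbn/2\rfloor}(-1)^{\abs{\hbl}}\frac{\hbn!}{2^{\abs{\hbl}}\hbl!(\hbn-2\hbl)!}
\biggbrak{\prod_q c_q^{\hbl_q}(\delta_q\psi)^{\hbn_q-2\hbl_q}-\prod_q \hat c_q^{\hbl_q}(\delta_q\hat\psi)^{\hbn_q-2\hbl_q}}\;,
\]
in which the combinatorial coefficients provide the factorial part of $A_{\hbn\hbl}$. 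The smallness input comes from the choice of partition~\eqref{eq:def_partition}: for $k\in\cA_q$ with $q\leqs\q$ and $t\in I_l$, one has $\beta_k(t)/\eps=(\mu_k+1)(u_{l+1}-t)/\eps\leqs c\gamma_0\,2^{2(q-\q)}$, hence $\abs{1-\e^{-\beta_k(t)/\eps}}\leqs c\gamma_0\,2^{2(q-\q)}$, which in turn yields the block-wise estimates $\norm{\delta_q\psi(t)-\delta_q\hat\psi(t)}_{L^2}\leqs c\gamma_0\,2^{2(q-\q)}\norm{\delta_q\hat\psi(t)}_{L^2}$ and $\abs{c_q-\hat c_q(t)}\leqs c\gamma_0\,2^{2(q-\q)}\hat c_q(t)$.

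I would then perform a double telescoping on each difference of products: first across the factors in $q$ via $\prod A_q-\prod B_q=\sum_q(A_q-B_q)\prod_{q'<q}A_{q'}\prod_{q'>q}B_{q'}$, and within each block via $a^p-b^p=(a-b)\sum_{i=0}^{p-1}a^ib^{p-1-i}$, arranged so that every active mode eventually contributes one ``difference'' factor of size $\gamma_0\,2^{2(q-\q)}$. Accumulating these factors across the $[\hbn]$ active blocks produces the $(c_1\gamma_0)^{[\hbn]}$ prefactor, while the remaining $\hbn_q-2\hbl_q$ geometric factors per block combine (after trivially bounding $2^{2q}\leqs 2^{2\q}$) into the $2^{2\q(\abs{\hbn}-2\abs{\hbl})}$ term of $A_{\hbn\hbl}$; the factor $\prod_{q:\hbn_q>0}\hbn_q$ counts the cut-point choice in each within-block telescoping. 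An iterated Cauchy--Schwarz bound on the resulting products of independent Fourier blocks then converts the $L^1$ estimate into $\prod_q\norm{\delta_q\hat\psi(t)}_{L^2}^{\hbn_q-2\hbl_q}$, after dominating every surviving $\hat c_q$ by $\hat c(t)=\sup_q\hat c_q(t)$.

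The main obstacle is the combinatorial bookkeeping in the double telescoping. A naive telescoping produces terms with factors $(c_1\gamma_0)^{\abs{S}}$ ranging over nonempty subsets $S$ of active modes, so one has to verify that the sum over proper subsets is absorbed into the full-support term; this either requires choosing $\gamma_0$ large enough so that $c_1\gamma_0\geqs 1$, or reorganising the telescoping as an interpolation along a path that introduces one $\gamma_0$ per active mode simultaneously. A secondary difficulty is ensuring that the accumulated geometric factors $2^{2(q-\q)}$ sum into the clean $2^{2\q(\abs{\hbn}-2\abs{\hbl})}$ form of $A_{\hbn\hbl}$ uniformly in the cutoff $N$.
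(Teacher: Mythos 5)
Your plan stalls exactly at the point you flag yourself, and that obstacle is not a bookkeeping nuisance but the heart of the statement: the prefactor $(c_1\gamma_0)^{[\hbn]}$ carries one power of $\gamma_0$ \emph{per active block}. A telescoping of $\prod_q A_q-\prod_q B_q$ inevitably contains terms in which only one block is replaced, and such terms carry a single factor $\gamma_0\,2^{-2(\q-q)}$; the sum over proper subsets cannot be absorbed into the full-support term when $\gamma_0$ is small, and smallness of $\gamma_0$ is not negotiable -- in Proposition~\ref{prop:delta_q0_phi_main} the estimate is applied with $\gamma_0$ of order $q_0^mQ_m2^{-(m+1)\q}$, so your first remedy ($c_1\gamma_0\geqs1$) is unavailable, and an "interpolation along a path" cannot manufacture more powers of smallness than the exact expansion contains. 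The paper's proof does not telescope at all: it works Fourier coefficient by Fourier coefficient and uses the exact relations $\psi_k(t)=\e^{-\alpha_k(u_{l+1},t)/\eps}\hat\psi_k(t)$ and $c_q^{\hbl_q}\leqs\e^{-2\hbl_q\alpha_{k(q)}(u_{l+1},t)/\eps}\hat c_q(t)^{\hbl_q}$, so that each monomial in the Hermite expansion of $\varphi$ is a single exponential multiple of the corresponding monomial of $\hat\varphi$; the error is then $(\e^{E}-1)$ times the hatted monomial with $E\leqs c_0\gamma_0\sum_q\hbn_q2^{-2(\q-q)}$, and it is this per-block exponential structure (one factor for each $q$ with $\hbn_q>0$) that the paper turns into $(c_1\gamma_0)^{[\hbn]}\prod_{q:\hbn_q>0}\hbn_q$. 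Any proof of the proposition as stated must reproduce that mechanism; your decomposition, as described, yields at best a bound with a single power of $\gamma_0$.

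There are also two mismatches in the bookkeeping. First, you attribute the factor $2^{2\q(\abs{\hbn}-2\abs{\hbl})}$ to "remaining geometric factors per block", but in your telescoping the factors $2^{2(q-\q)}$ occur once per replaced block, not $\hbn_q-2\hbl_q$ times; in the paper this factor has a different origin: after the crude bound $\abs{\hat\psi_k(t)}\leqs\smallnorm{\delta_q\hat\psi(t,\cdot)}_{L^2}$ it is the cardinality of the set $\cB(k_0)$ of Fourier tuples $\kq_i$ constrained to sum to $k_0$, while the prefactor $2^{q_0}$ comes from summing the uniform coefficient bound over the $\Order{2^{2q_0}}$ modes of $\cA_{q_0}$. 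Your Bernstein/$L^1$ route could in principle replace that last step, but then converting the $L^1$ norm of $\prod_q(\delta_q\hat\psi)^{\hbn_q-2\hbl_q}$ into $\prod_q\smallnorm{\delta_q\hat\psi}_{L^2}^{\hbn_q-2\hbl_q}$ requires H\"older together with Bernstein on each block, with the resulting $2^{q(\cdot)}$ losses tracked against $A_{\hbn\hbl}$ -- "iterated Cauchy--Schwarz" alone does not accomplish this. So the proposal, as it stands, neither produces the claimed power of $\gamma_0$ nor accounts correctly for the combinatorial factor $A_{\hbn\hbl}$.
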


We can now derive the main estimate of this section.

\begin{prop}
\label{prop:delta_q0_phi_main} 
There is a constant $Q_m = \Order{m^{-1/2}}$ such that, if one chooses  
$\gamma_0$ of order $q_0^m Q_m 2^{-(m+1)\q}$, there exists a constant 
$\bar\kappa_m$, comparable to $\kappa_m$, such that 
\begin{equation}
 \biggprob{\sup_{t\in I_\ell}
 \norm{\delta_{q_0}\ph(t,\cdot)}_{L^2} > 
 \frac{2^{\abs{\alpha}q_0}h^m}{K_m(q_0) 2^{(\q - q_0)/2}}}
 \leqs \bar C_m(\hbn)
 \exp\biggset{-\bar\kappa_m
\frac{2^{(\q-q_0)/m} 2^{2\abs{\alpha}q_0/m}}
{(q_0+c_1)^2}\frac{h^2}{\sigma^2} }
\end{equation} 
holds for all $h\geqs\sigma$, where
$\bar C_m(\hbn) = 2+\e^{m-1} + c_0(\q + m)$ for a numerical constant $c_0$.
\end{prop}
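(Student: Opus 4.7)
The plan is to write
$$\norm{\delta_{q_0}\ph(t,\cdot)}_{L^2} \leqs X_{\hbn}(t) + R(t), \qquad R(t) := \norm{\delta_{q_0}(\ph - \hat\ph)(t,\cdot)}_{L^2},$$
and to bound each of the two contributions separately by half of the target threshold $B := 2^{\abs{\alpha}q_0}h^m/[K_m(q_0)2^{(\q-q_0)/2}]$. The first term is already controlled by \eqref{eq:bound_Il_2}: replacing $h$ by a suitable constant multiple accounts for the factor $1/2$ and absorbs into $\bar\kappa_m$ a fixed fraction of $\kappa_m$. This yields the stated exponential tail with prefactor $2+\e^{m-1}$, which constitutes part of $\bar C_m(\hbn)$.

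For the remainder $R(t)$, the estimate of Proposition~\ref{prop:phi_hatphi} bounds it deterministically in terms of $\gamma_0$, the combinatorial coefficients $A_{\hbn\hbl}$, and the $L^2$ norms $\norm{\delta_q\hat\psi(t,\cdot)}_{L^2}$. Since $\delta_q\hat\psi(t,\cdot)$ is a sum of independent Gaussian martingales with per-mode variance $\hat v_k(t) \leqs v_k$ and total variance $\hat c_q(t) \leqs c_q \lesssim \sigma^2$, a standard Doob argument applied to the exponential submartingale generated by the $\hat\psi_k$ gives
$$\biggprob{\sup_{t\in I_l}\norm{\delta_q\hat\psi(t,\cdot)}_{L^2} > K\sigma} \leqs c\e^{-\kappa K^2}$$
for each $q$. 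A union bound over the $\Order{\q + m}$ indices $q$ relevant in~\eqref{eq:bound_norm_difference} (there are at most $\brak{\hbn}\leqs m$ in the support of $\hbn$, all $\leqs\q$) contributes the additional $c_0(\q + m)$ term in $\bar C_m(\hbn)$. On the complement of these auxiliary bad events, $R(t)$ is bounded deterministically by a sum of terms of the form $2^{q_0}(c_1\gamma_0)^{\brak{\hbn}} m^{\brak{\hbn}} A_{\hbn\hbl}\,\sigma^{\abs{\hbn}}K^{\abs{\hbn}-2\abs{\hbl}}$.

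The main obstacle is then to choose $\gamma_0$ so that this deterministic bound is at most $B/2$ uniformly over all $\hbn$ with $\abs{\hbn}=m$. The worst case is $\hbl = 0$, where $A_{\hbn,\mathbf{0}} \sim 2^{2\q m}$, combined with the smallest admissible $\brak{\hbn}=1$, which minimises the useful power of the small parameter $\gamma_0$. The resulting constraint is schematically $\gamma_0\cdot 2^{2\q m + q_0}\sigma^m \lesssim 2^{\abs{\alpha}q_0-(\q-q_0)/2}h^m/K_m(q_0)$; using Lemma~\ref{somme_ngras} to write $K_m(q_0)\asymp m!(m+c_2)^m(q_0+c_1)^m$ and the hypothesis $h\geqs\sigma$, solving for $\gamma_0$ produces the stated choice $\gamma_0 \sim q_0^m Q_m 2^{-(m+1)\q}$ with $Q_m = \Order{m^{-1/2}}$ absorbing the factorial and combinatorial constants. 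The delicate point is to verify that with this single choice of $\gamma_0$, all the terms with $\hbl>0$ or $\brak{\hbn}>1$ are simultaneously controlled; this amounts to checking that the ratios of successive terms in the sum over $\hbl$ stay bounded by powers of $K$ that can be calibrated (typically $K \sim h/\sigma$) so as to be absorbed into the exponential tail without affecting the rate $\bar\kappa_m$.
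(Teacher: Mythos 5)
Your skeleton is the same as the paper's: split $\norm{\delta_{q_0}\ph(t,\cdot)}_{L^2}$ into the martingale part $\norm{\delta_{q_0}\hat\ph(t,\cdot)}_{L^2}$, controlled by \eqref{eq:bound_Il_2}, and the remainder controlled by Proposition~\ref{prop:phi_hatphi} together with an auxiliary event bounding $\sup_{t\in I_\ell}\smallnorm{\delta_q\hat\psi(t,\cdot)}_{L^2}$ for the $\Order{\q+m}$ relevant $q$ (this is exactly where $c_0(\q+m)$ comes from), and then choose $\gamma_0$ so that the deterministic remainder is below half the threshold. The genuine gap is in your calibration of the auxiliary threshold. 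Taking $K\sigma$ with $K\sim h/\sigma$, i.e.\ a threshold $\tilde h\sim h$, the union bound gives a contribution of order $(\q+m)\,\e^{-\kappa_0 h^2/\sigma^2}$, and this is \emph{not} absorbed into the claimed right-hand side: the exponent you must match contains the factor $2^{(\q-q_0)/m}2^{2\abs{\alpha}q_0/m}/(q_0+c_1)^2$, which is unbounded in $\q$ (and in $q_0$), so no fixed $\bar\kappa_m>0$ dominates $\e^{-\kappa_0 h^2/\sigma^2}$ uniformly in $\hbn$. This is not a cosmetic issue, since the next subsection sums the proposition over $\hbn$ and $q_0$ and needs precisely that $\q$-dependent gain in the exponent to beat the factor $2^{(m+3)\q}$ in $\bar K_{m,b}(q_0)$. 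The paper's proof fixes the threshold by imposing $\kappa_0\tilde h^2=\kappa_m\,2^{(\q-q_0)/m}2^{2\abs{\alpha}q_0/m}h^2/(q_0+c_1)^2$, so the auxiliary event already decays at the claimed rate, and only then balances $\gamma_0$.

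This has a second consequence you do not track: once $\tilde h$ carries the $q_0,\q$-dependent factor, it is $\tilde h$ (not $\sigma$-sized fluctuations) that enters the deterministic remainder through $(2^{2\q}\tilde h)^{\abs{\hbn}-2\abs{\hbl}}$, and this feedback is what actually produces the stated order of $\gamma_0$; your schematic constraint with $\sigma^m$ on the left does not reproduce it, and your appeal to $h\geqs\sigma$ cannot replace it. Finally, the simultaneous control over all $\hbl$ that you defer ("ratios of successive terms") is handled in the paper in closed form via $\sum_{\hbl\leqs\lfloor\hbn/2\rfloor}\frac{\hbn!}{\hbl!(\hbn-2\hbl)!}a^{\abs{\hbl}}b^{\abs{\hbn}-2\abs{\hbl}}\leqs 2^{\abs{\hbn}}(\sqrt{a}+b)^{\abs{\hbn}}$, after which a single equation determines $\gamma_0$; some argument of this kind must be supplied for your proof to close.
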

\begin{proof}
The argument is essentially deterministic. We introduce the two events 
\begin{align}
 \Omega_1(\tilde h) 
 &= \biggset{\forall q \leqs \q + \hbn_{\q},\; \sup_{t\in I_\ell} 
 \smallnorm{\delta_q\hat\psi(t,\cdot)}_{L^2} \leqs \tilde h}\;, \\
 \Omega_2(h, q_0) 
 &= \biggset{\sup_{t\in I_\ell} \norm{\delta_{q_0}\hat\ph(t,\cdot)}_{L^2}
 \leqs \frac12 \frac{2^{\abs{\alpha}q_0}h^m}{K_m(q_0) 2^{(\q - q_0)/2}}}\;.
\end{align}
The estimate~\eqref{eq:bound_Il_2} provides an upper bound on 
$\fP(\Omega_2(h, q_0)^c)$. As for $\Omega_1(\tilde h)$, the bound 
\begin{equation}
 \fP\bigpar{\Omega_1(\tilde h)^c}
 \leqs c_0 \bigpar{\q + \hbn_{\q}} \e^{-\kappa_0\tilde h^2/\sigma^2}
\end{equation} 
follows as a particular case of~\eqref{eq:bound_Il_2}, applied separately to 
all $\hbn$ of size $\abs{\hbn} = 1$. We now choose $\tilde h$ is such a way 
that 
\begin{equation}
 \kappa_0 \tilde h^2 
 = \kappa_m \frac{2^{(\q-q_0)/m} 2^{2\abs{\alpha}q_0/m}}
{(q_0+c_1)^2}h^2\;,
\end{equation} 
so that $\fP(\Omega_1(\tilde h)^c)$ and $\fP(\Omega_2(h, q_0)^c)$ are of 
comparable size. 
This allows us to bound the quantity 
\begin{equation}
 \sup_{t\in I_\ell}
 \norm{\delta_{q_0}\ph(t,\cdot)}_{L^2}
 \leqs 
 \sup_{t\in I_\ell}
 \norm{\delta_{q_0}\hat\ph(t,\cdot)}_{L^2} + 
 \sup_{t\in I_\ell}
 \norm{\delta_{q_0}\ph(t,\cdot) - \delta_{q_0}\hat\ph(t,\cdot)}_{L^2} 
\end{equation} 
on $\Omega_1(\tilde h)\cap\Omega_2(h, q_0)$. 
By Proposition~\ref{prop:phi_hatphi}, we have 
\begin{equation}
 \sup_{t\in I_\ell}
 \norm{\delta_{q_0}\ph(t,\cdot) - \delta_{q_0}\hat\ph(t,\cdot)}_{L^2}
 \leqs 2^{q_0} (c_1\gamma_0 m)^{[\hbn]} 
 \sum_{\hbl:\hbl \leqs\lfloor\hbn/2\rfloor}
 \frac{\hbn!}{\hbl!(\hbn-2\hbl)!}
 \biggpar{\frac{\hat c(t)}{2}}^{\abs{\hbl}} 
 \bigpar{2^{2\q}\tilde h}^{\abs{\hbn}-2\abs{\hbl}}\;.
\end{equation} 
Note the relation
\begin{align}
 \sum_{\hbl:\hbl \leqs\lfloor\hbn/2\rfloor}
 \frac{\hbn!}{\hbl!(\hbn-2\hbl)!} a^{\hbl} b^{\hbn - 2\hbl} 
 &= \prod_{q\geqs0}
 \sum_{\hbl_q \leqs \lfloor\hbn_q/2\rfloor}
 \frac{\hbn_q!}{\hbl_q!(\hbn_q-2\hbl_q)!} a^{\hbl_q} b^{\hbn_q - 2\hbl_q} \\
 &\leqs \prod_{q\geqs0} \frac{\hbn_q!}{(\hbn_q/2)!}
 \bigpar{\sqrt{a} + b}^{\hbn_q} 
 \leqs 2^{\abs{\hbn}}\bigpar{\sqrt{a} + b}^{\abs{\hbn}}\;.
\end{align} 
Since $\abs{\hbn} = m$, it follows that 
\begin{equation}
 \sup_{t\in I_\ell}
 \norm{\delta_{q_0}\ph(t,\cdot)}_{L^2}
 \leqs \frac12 \frac{2^{\abs{\alpha}q_0}h^m}{K_m(q_0) 2^{(\q - q_0)/2}}
 + 2^{q_0} (c_1\gamma_0 m)^m 
 2^m\bigpar{\hat c(t) + 2^{2\q}\tilde h}^m
\end{equation} 
holds on $\Omega_1(\tilde h)\cap\Omega_2(h, q_0)$. 
Choosing $\gamma_0$ such that both summands are equal 
yields the result. 
\end{proof}

\begin{cor}
\label{cor_1} 
We have 
\begin{multline}
 \biggprob{\sup_{t\in I}
 \bignorm{\delta_{q_0}(\prod_{q\geqs0}
\Wick{\delta_q\psi(t,\cdot)^{\hbn_q}})}_{L^2} > 
\frac{\hbn!}{m!}h_\hbn^m2^{\abs{\alpha} q_0}} \\
 \leqs \frac{T}{\eps}\tilde C_m(\hbn)q_0^{-m}
 2^{(m+3)\q}
 \exp\biggset{-\bar\kappa_m
\frac{2^{(\q-q_0)/m} 2^{2\abs{\alpha}q_0/m}}
{(q_0+c_1)^2}\frac{h^2}{\sigma^2} }\;,
\end{multline}
where $\tilde C_m(\hbn) = Q_m^{-1}\bar C_m(\hbn)$. 
\end{cor}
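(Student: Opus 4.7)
The plan is to deduce the corollary from Proposition~\ref{prop:delta_q0_phi_main} by a union bound over the partition $\{I_\ell\}_{\ell=0}^{L-1}$ of $I = [0,T]$ defined in~\eqref{eq:def_partition}. Two preliminary observations align the two statements. First, by Remark~\ref{rem:q0}, whenever $\q + \hbn_{\q} < q_0$ the projection $\delta_{q_0}\bigpar{\prod_{q\geqs0}\Wick{\delta_q\psi(t,\cdot)^{\hbn_q}}}$ vanishes identically, so the event on the left-hand side is empty and the bound is trivial. We may therefore restrict to $\hbn$ with $\q + \hbn_{\q} \geqs q_0$, which is exactly the regime in which the indicator in the definition of $h_\hbn^m$ equals $1$. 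Substituting that definition then yields
\begin{equation}
\frac{\hbn!}{m!}\, h_\hbn^m\, 2^{\abs{\alpha}q_0}
= \frac{2^{\abs{\alpha}q_0}\,h^m}{K_m(q_0)\, 2^{(\q-q_0)/2}}\;,
\end{equation}
so the threshold in the corollary matches precisely the one in Proposition~\ref{prop:delta_q0_phi_main}, applied to $\ph(t,\cdot) = \prod_{q\geqs0} \Wick{\delta_q\psi(t,\cdot)^{\hbn_q}}$.

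Next, I would apply the elementary union bound
\begin{equation}
\biggprob{\sup_{t\in I}\norm{\delta_{q_0}\ph(t,\cdot)}_{L^2} > R}
\leqs \sum_{\ell=0}^{L-1} \biggprob{\sup_{t\in I_\ell}\norm{\delta_{q_0}\ph(t,\cdot)}_{L^2} > R}
\end{equation}
with $R = \frac{\hbn!}{m!}h_\hbn^m 2^{\abs{\alpha}q_0}$, and bound each of the $L$ summands using Proposition~\ref{prop:delta_q0_phi_main}. Since the exponent produced by the proposition is uniform in $\ell$, this yields a global prefactor $L\cdot \bar C_m(\hbn)$ in front of the same exponential.

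All that remains is to count $L$. With $\abs{k_0} = 2^{\q}$ we have $\mu_{k_0} = (2\pi)^2 \norm{k_0}^2 \lesssim 2^{2\q}$, so the definition~\eqref{eq:def_partition} gives
\begin{equation}
L \leqs \frac{(\mu_{k_0}+1)\,T}{\gamma_0\,\eps}
\lesssim \frac{T}{\eps}\cdot \frac{2^{2\q}}{\gamma_0}\;.
\end{equation}
Inserting the choice of $\gamma_0$ of order $q_0^{m} Q_m\, 2^{-(m+1)\q}$ dictated by Proposition~\ref{prop:delta_q0_phi_main} yields
\begin{equation}
L \lesssim \frac{T}{\eps}\, Q_m^{-1}\, q_0^{-m}\, 2^{(m+3)\q}\;,
\end{equation}
and multiplying by $\bar C_m(\hbn)$ reproduces exactly the claimed prefactor, with $\tilde C_m(\hbn) = Q_m^{-1}\bar C_m(\hbn)$. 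The substantive work has already been done in Proposition~\ref{prop:delta_q0_phi_main}; the only thing to verify carefully here is the bookkeeping on the powers of $2^{\q}$ and $q_0$ when $L\cdot \bar C_m(\hbn)$ is assembled, which I expect to be routine.
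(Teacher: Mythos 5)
Your proposal is correct and matches the paper's own argument, which simply sums the estimate of Proposition~\ref{prop:delta_q0_phi_main} over the $L$ partition intervals, with $L \lesssim \frac{T}{\eps}\,2^{2\q}/\gamma_0$ and the choice $\gamma_0 \asymp q_0^m Q_m 2^{-(m+1)\q}$ producing the prefactor $\frac{T}{\eps}Q_m^{-1}q_0^{-m}2^{(m+3)\q}\bar C_m(\hbn)$. Your additional checks (the vanishing of the projection when $\q+\hbn_{\q}<q_0$ via Remark~\ref{rem:q0}, and the identification of the threshold via the definition of $h_\hbn^m$) are exactly the bookkeeping the paper leaves implicit.
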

\begin{proof}
It suffices to sum the previous estimate over all $\ell\in\set{1,\dots,L}$, 
where $L$ has been introduced in~\eqref{eq:def_partition}. 
\end{proof}

%%%%%%%%%%%%%%%%%%%%%%%%%%%%%%%%%%%%%%%%%%%%%%%%%%%%%%%%%%%%%%%%%%%%%%%%%%%%%%%%

\subsection{Summing over $q_0$ and $\hbn$}

Replacing the bound obtained in Corollary~\ref{cor_1} 
in~\eqref{eq:bound_Besovnorm_2}, we get 
\begin{equation}
P_m(h) \leqs \frac{T}{\eps}\sum_{q_0\geqs0}
q_0^{-m} \sum_{\abs{\hbn}=m} 
\tilde C_m(\hbn)2^{(m+3)\q}\exp\biggset{-\beta(m,q_0)2^{(\q-q_0)/m}}\;,
\end{equation}
where 
\begin{equation}
 \beta(m,q_0) 
= \bar\kappa_m\frac{ 
2^{2\abs{\alpha}q_0/m}}{(q_0+c_1)^2}\frac{h^2}{\sigma^2}\;.
\end{equation} 
We will first perform the sum over $\hbn$. To this end, we write 
\begin{equation}
\label{Km(q0)}
\bar K_{m,b}(q_0) = \sum_{\abs{\hbn}=m} 
\q^b 2^{(m+3)\q}\exp\biggset{-\beta(m,q_0)2^{(\q-q_0)/m}}\;,
\end{equation}
The following lemma is obtained in a similar way as Lemma~\ref{somme_ngras}. We 
give its proof in Appendix~\ref{appendix_B}. 

\begin{lemma}
\label{lem:sum_hbn} 
There are numerical constants $c_1, \beta_0$ such that for all $\beta(m,q_0) 
\geqs\beta_0$, one has the bound 
\begin{equation}
\bar K_{m,b}(q_0) \leqs 
c_1 q_0^{m+b}m^m 2^{(m+3)q_0} \e^{-\beta(m,q_0)}\;.
\end{equation}
\end{lemma}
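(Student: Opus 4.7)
The strategy mirrors the proof of Lemma~\ref{somme_ngras}: reorganise the sum by the value $q_\star := \q$, count the contributing multi-indices combinatorially, and bound the resulting one-dimensional sum over $q_\star$ by a Laplace-type argument.

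For fixed $q_\star \geqs 0$, the set of $\hbn$ with $\abs{\hbn} = m$ and $\q = q_\star$ is in bijection with the set of compositions of $m-1$ into $q_\star + 1$ non-negative parts (by subtracting one from the $q_\star$-th entry), so its cardinality is $\binom{m+q_\star-1}{m-1} \leqs (m+q_\star)^m/m!$. This reduces the estimate to the one-dimensional sum
\begin{equation*}
\bar K_{m,b}(q_0) \leqs \frac{1}{m!}\sum_{q_\star \geqs 0} (m+q_\star)^m\,q_\star^b\,2^{(m+3)q_\star}\,\exp\bigset{-\beta(m,q_0)\,2^{(q_\star-q_0)/m}}\;.
\end{equation*}

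Next, compare each summand with its value at $q_\star = q_0$, which (up to Stirling-type factors) already has the desired form $q_0^{m+b}\,m^m\,2^{(m+3)q_0}\,\e^{-\beta}$. Setting $y = q_\star - q_0$: for $y \geqs 0$, convexity of $u \mapsto 2^u$ yields $2^{y/m} - 1 \geqs y\ln 2 / m$, so the ratio to the $y=0$ term is bounded by $(1+y/q_0)^{m+b}\exp\bigset{-y\ln 2\,(\beta/m-m-3)}$, which sums to a numerical constant whenever $\beta \geqs \beta_0$ with $\beta_0$ large enough (for instance $\beta_0 \geqs 2m(m+3)\ln 2$) that the exponent is strictly negative. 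For $y < 0$, the concavity estimate $1 - 2^{y/m} \leqs \abs{y}\ln 2/m$ together with the strong polynomial decay $2^{-(m+3)\abs{y}}$ controls the ratio by a geometric series that also sums to a constant.

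Finally, Stirling's inequality gives $(m+q_0)^m/m! \leqs (\e(1+q_0/m))^m$, which is $\leqs c^m q_0^m m^m$ by handling $q_0 \leqs m$ and $q_0 \geqs m$ separately (using $\e(1+q_0/m) \leqs 2\e$ resp.\ $\leqs 2\e q_0/m$) and absorbing numerical factors into $c^m$. Multiplying by $q_0^b\,2^{(m+3)q_0}\,\e^{-\beta(m,q_0)}$ and the constants from the geometric series yields the claimed bound $c_1 q_0^{m+b}\,m^m\,2^{(m+3)q_0}\,\e^{-\beta(m,q_0)}$. The main obstacle is the calibration of $\beta_0$: the peak of the summand shifts with $\beta$, and one needs the convex (for $y \geqs 0$) and concave (for $y < 0$) bounds to be tight simultaneously, so that every polynomial overhead is absorbed into the target prefactor $c_1 q_0^{m+b}m^m$.
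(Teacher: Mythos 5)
Your combinatorial reduction and your treatment of the upper tail are sound: the count $\binom{m+q_\star-1}{m-1}\leqs (m+q_\star)^m/m!$ is correct, and for $y=q_\star-q_0\geqs 0$ the convexity bound does give a ratio $\leqs (1+y/q_0)^{m+b}\exp\{-y\ln 2\,(\beta/m-m-3)\}$, which is summable once $\beta\gtrsim m(m+3)$ (the fact that this forces an $m$-dependent $\beta_0$, and that the resulting sum is really only $O(C^m)$ rather than $O(1)$, is a looseness the paper shares and is absorbable thanks to the $m^m$ in the target). The genuine gap is the lower tail $q_\star<q_0$. There the double-exponential weight works \emph{against} you: $\exp\{-\beta 2^{y/m}\}>\e^{-\beta}$ for $y<0$, and your own two estimates give the ratio bound $2^{-(m+3)\abs{y}}\exp\{\beta(1-2^{y/m})\}\leqs \exp\{\abs{y}\ln 2\,(\beta/m-m-3)\}$, whose exponent is \emph{positive} precisely when $\beta>m(m+3)$ — i.e.\ in exactly the regime your $y\geqs0$ argument requires. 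So your "geometric series" for $y<0$ has ratio larger than $1$, and the two halves of the proof impose contradictory conditions on $\beta$; there is no $\beta$ for which both steps work as written.

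This is not a repairable bookkeeping slip within your scheme, because the conclusion you are comparing against is not available from the lower tail. You silently dropped the restriction $\q+\hbn_{\q}\geqs q_0$ coming from Remark~\ref{rem:q0} and~\eqref{eq:Kmq0}; without it (and in fact even with it) a single lower-tail term already beats the claimed bound for large $\beta$: for $m=1$, $b=0$, $q_0=1$, the term $\hbn_0=1$ contributes $\e^{-\beta/2}$, which exceeds $c_1 q_0^{m+b}m^m2^{(m+3)q_0}\e^{-\beta}=16\,c_1\e^{-\beta}$ as soon as $\beta$ is large. The most the lower tail allows, under the constraint $\q\geqs q_0-\hbn_{\q}\geqs q_0-m$, is the factor $\exp\{-2^{-m/m}\beta\}=\e^{-\beta/2}$ — which is exactly the factor $\exp\{-2^{-m/m_0}\beta(m_0,q_0)\}$ that the paper's own base case produces (the paper proceeds differently: it keeps the restriction, decomposes over the number $a=\brak{\hbn}$ of nonzero components as in Lemma~\ref{somme_ngras}, and inducts on $a$ after a monotone-sum/integral comparison; its final $\e^{-\beta}$ is itself an over-statement of that base case). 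To salvage your route you should keep the constraint on $\q$, compare the block $q_\star<q_0$ against $\e^{-\beta/2}$ rather than $\e^{-\beta}$ (proving the lemma with $\e^{-\beta/2}$, which suffices downstream up to a rescaling of $\kappa_m$), and state explicitly that $\beta_0$ must grow like $m(m+3)$; the claimed bound with $\e^{-\beta}$ and a lower tail "summing to a constant" cannot be established, because that is exactly where the estimate degrades.
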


Using the expression for $\bar C_m(\hbn)$ given in 
Proposition~\ref{prop:delta_q0_phi_main}, we thus obtain 
\begin{align}
P_m(h) 
&\leqs \frac{T}{\eps} Q_m^{-1} 
\sum_{q_0\geqs0} q_0^{-m} 
\Bigbrak{(2+\e^{m-1}+c_0m) \bar K_{m,0}(q_0) + \bar K_{m,1}(q_0)} \\
&\leqs \frac{T}{\eps} c_1 m^m Q_m^{-1} 
\sum_{q_0\geqs0} 
\Bigbrak{2+\e^{m-1}+c_0m+c_0q_0} 2^{(m+3)q_0} \e^{-\beta(m,q_0)}\;.
\label{eq:Pm(h)_final} 
\end{align}
It remains to perform the sums over $q_0$. These are of the form 
\begin{equation}
 \sum_{q_0\geqs0} f(q_0)\;, \qquad 
 f(x) = x^b 2^{ax} \exp\biggset{-\gamma\frac{2^{\abs{\alpha}x/m}}{(x+c_1)^2}}\;, 
 \qquad
 a = m+3\;, \qquad 
 \gamma = \bar\kappa_m \frac{h^2}{\sigma^2}\;.
\end{equation} 
One checks that $f$ is decreasing, so that one has the upper bound 
\begin{equation}
 \sum_{q_0\geqs0} f(q_0) \leqs f(0) + f(1) + \int_1^\infty f(x)\6x\;.
\end{equation} 
The terms $f(0)$ and $f(1)$ are both exponentially small in $h^2/\sigma^2$. To 
evaluate the integral, we can absorb the constant $c_1$ in $\gamma$, and the 
term $x^b$ into $2^{ax}$, by changing slightly the definitions of $\gamma$ and 
$a$. We first consider the case where the term $x^2$ in the denominator is 
absent, where the changes of variables $y = 2^{\abs{\alpha}x/m}$ and $z = 
\gamma y$ yield 
\begin{align}
\label{eq:integral_Gamma} 
 \int_1^\infty 2^{ax} \e^{-\gamma 2^{\abs{\alpha}x/m}} \6x
 &= \frac{m}{\abs{\alpha}\log2} \int_{2^{\abs{\alpha}/m}}^\infty 
 y^{am/\abs{\alpha}-1} \e^{-\gamma y} \6y \\
 &\leqs \frac{m}{\abs{\alpha}\log2}
 \frac{1}{\gamma^{\lambda+1}} \int_\gamma^\infty z^\lambda\e^{-z}\6z\;,
\end{align} 
with $\lambda = am/(2\abs{\alpha})$. The asymptotics of the incomplete Gamma 
function shows that 
\begin{equation}
 \int_1^\infty 2^{ax} \e^{-\gamma 2^{\abs{\alpha}x/m}} \6x
 \lesssim \frac{m}{\abs{\alpha}} \frac{\e^{-\gamma}}{\gamma} 
 \lesssim \frac{m}{\abs{\alpha}} \e^{-\gamma/2}\;.
\end{equation} 
In order to incorporate the effect of the denominator $x^2$, we use the upper 
bound 
\begin{equation}
 x^2 2^{-\abs{\alpha}x/m} \leqs \frac{4\e^{-2}}{(\log2)^2} 
\frac{m^2}{\abs{\alpha}^2}\;.
\end{equation} 
We can thus bound the integral of $f$ by the 
integral~\eqref{eq:integral_Gamma}, with $\gamma$ multiplied by a constant 
times $\abs{\alpha}^2/m^2$, and $\alpha$ divided by $2$. In other words, we get 
\begin{equation}
 \int_1^\infty f(x)\6x 
 \leqs c \frac{m}{\abs{\alpha}}\e^{-\kappa \abs{\alpha}^2\gamma/m^2}\;.
\end{equation} 
Replacing this in~\eqref{eq:Pm(h)_final} yields a bound on $P_m(h)$, completing 
the proof of Theorem~\ref{thm:stoch_convolution} in the case of a constant 
linearisation $a(t)$. 

%%%%%%%%%%%%%%%%%%%%%%%%%%%%%%%%%%%%%%%%%%%%%%%%%%%%%%%%%%%%%%%%%%%%%%%%%%%%%%%%%%%%%%%%%%%%%%%%%%%%%%%%%

\subsection{The case of a general linearisation $a(t)$}

Recall that we actually want to consider the more general linear equation 
\eqref{eq:stoch_convolution} 
\begin{equation}
\label{eq:stoch_convolution_t} 
 \6\tilde\psi(t,x) 
 = \frac{1}{\eps}\bigbrak{\Delta\tilde\psi(t,x) + a(t) \tilde\psi(t,x)} \6t + 
\frac{\sigma}{\sqrt{\eps}} \6W(t,x)\;, 
\end{equation} 
where here $a(t)$ satisfies~\eqref{eq:a(t)}, and we write $\psi = \tilde\psi$ to 
avoid confusion in the notations. Projecting~\eqref{eq:stoch_convolution_k_t} on 
the $k$th basis vector $e_k$, we obtain 
\begin{equation}
\label{eq:stoch_convolution_k_t} 
 \6\tilde\psi_k(t) 
 = \frac{1}{\eps}a_k(t)\tilde\psi_k(t) \6t + 
\frac{\sigma}{\sqrt{\eps}} \6W_k(t)\;,
\end{equation} 
where $a_k(t)=-\mu_k + a(t)$ and the $\set{W_k(t)}_{t\geqs0}$ are 
the same independent Wiener processes as before. The solution of 
\eqref{eq:stoch_convolution_k_t} with the same initial condition $\psi_k(0)$ 
as in~\eqref{stoch_convolution_k}, is given by 
\begin{equation}
\label{stoch_convolution_k2} 
\tilde \psi_k(t) =  \e^{\tilde\alpha_k(t)/\eps}\psi_k(0) + 
\frac{\sigma}{\sqrt{\eps}}\int_0^t \e^{\tilde\alpha_k(t,t_1)/\eps} \6W_k(t_1)\;,
\end{equation} 
where 
\begin{equation}
 \tilde \alpha_k(t,t_1) 
 = \alpha_k(t,t_1) + \int_{t_1}^t (1 + a(t_2))\6t_2
 = \alpha_k(t,t_1) + \Order{\abs{t - t_1}}\;.
\end{equation} 
For given $q_0$, we use the same partition of $[0,T]$ into intervals $I_l$ as 
before. On each interval, we can write 
\begin{equation}
\hat \psi_k(t) := \e^{\tilde\alpha_k(u_{l+1},t)/\eps} \tilde \psi_k(t)
 =\e^{\tilde\alpha_k(u_{l+1})/\eps}\psi_k(0) + \frac{\sigma}{\sqrt{\eps}}\int_0^t 
\e^{\tilde\alpha_k(u_{l+1},t_1)/\eps} \6W_k(t_1)\;.
\end{equation} 
$\hat \psi_k(t)$ is again a martingale, so that its supremum over 
the interval $I_l$ can be estimated as before. Note however that the variance 
of the associated sums over $k\in\cA_q$ is not exactly equal to $\hat c_q(t)$: 
it is rather of the form
\begin{equation}
 \hat V_q(t) = \hat c_q(t) \bigbrak{1 + \Order{\gamma_0 2^{-2(\q - q)}}}\;.
\end{equation} 
Therefore, the Wick powers of this martingale with respect to $\hat c_q(t)$ are 
not martingales. One can however estimate the supremum of the Wick powers with 
the correct variance as before, and then compare the two types of Wick powers. 
In fact, we want to bound the supremum over $t\in I_l$ of 
\begin{equation}
\bignorm{\delta_{q_0}(\prod_{q\geqs0}H_{\hbn_q}(\delta_q\tilde\psi(t,\cdot), c_q))}_{L^2}\;.
\end{equation} 
By the binomial formula introduced in Lemma~\ref{lem:hermite_binomial}, we obtain 
\begin{align}
\prod_{q\geqs0}H_{\hbn_q}&(\delta_q\tilde\psi(t,x), c_q) \\
={}& 
\prod_{q\geqs0}H_{\hbn_q}(\delta_q\hat\psi(t,x)+\delta_q\tilde\psi(t,
x)-\delta_q\hat\psi(t,x), \hat V_q(t)+c_q-\hat V_q(t))\;\\
={}&  \sum_{0\leqs\abs{\hbl}\leqs\abs{\hbn}} 
\binom{\hbn}{\hbl}\prod_{q\geqs0}H_{\hbl_q}(\delta_q\tilde\psi(t,
x)-\delta_q\hat\psi(t,x), c_q-\hat V_q(t))
H_{\hbn_q-\hbl_q}(\delta_q\hat\psi(t,x), \hat V_q(t))\;\\
={}&  \prod_{q\geqs0}H_{\hbn_q}(\delta_q\hat\psi(t,x), \hat V_q(t)) \;\\
&{}+ \sum_{1\leqs\abs{\hbl}\leqs\abs{\hbn}} 
\binom{\hbn}{\hbl}\prod_{q\geqs0}H_{\hbl_q}
(\delta_q\tilde\psi(t,x)-\delta_q\hat\psi(t,x), c_q-\hat V_q(t))
H_{\hbn_q-\hbl_q}(\delta_q\hat\psi(t,x), \hat V_q(t))\;.
\end{align} 
Therefore,  by the triangle inequality we get
\begin{align}
\bignorm{\delta_{q_0}(\prod_{q\geqs0}&H_{\hbn_q}(\delta_q\tilde\psi(t,\cdot), 
c_q))}_{L^2} \\
\leqs{}& 
\bignorm{\delta_{q_0}(\prod_{q\geqs0}H_{\hbn_q}(\delta_q\hat\psi(t,\cdot), \hat 
V_q(t)))}_{L^2}\;\\
% &{}+ \sum_{1\leqs\abs{\hbl}\leqs\abs{\hbn}}\! \binom{\hbn}{\hbl}
% \biggnorm{\delta_{q_0}\biggpar{\prod_{q\geqs0}H_{\hbl_q}
% (\delta_q\tilde\psi(t,\cdot)-\delta_q\hat\psi(t,\cdot), c_q-\hat 
% V_q(t))H_{\hbn_q-\hbl_q}(\delta_q\hat\psi(t,\cdot), \hat V_q(t))}}_{L^2}\;\\
% \leqs{}& \bignorm{\delta_{q_0}(\prod_{q\geqs0}
% H_{\hbn_q}(\delta_q\hat\psi(t,\cdot), \hat V_q(t)))}_{L^2} \\
&{}+ \sum_{1\leqs\abs{\hbl}\leqs\abs{\hbn}} 
\binom{\hbn}{\hbl}2^{q_0}\bignorm{\prod_{q\geqs0}H_{\hbl_q}
(\delta_q\tilde\psi(t,\cdot)-\delta_q\hat\psi(t,\cdot), c_q-\hat 
V_q(t))}_{L^2}\\
& \phantom{+} {}\times
\bignorm{\prod_{q\geqs0}H_{\hbn_q-\hbl_q}(\delta_q\hat\psi(t,\cdot), \hat 
V_q(t))}_{L^2}\;, 
\end{align} 
where the last inequality is a rough bound, obtained by Cauchy--Schwarz's 
inequality. The first norm can be bounded as in~\eqref{eq:bound_Il_2}, and one 
can see the last norm as a particular case of it. In the same spirit as in the 
proof of Proposition~\ref{prop:phi_hatphi}, for $\abs{\hbl} \geqs 1$ one can 
bound  
\begin{equation}
\bignorm{\prod_{q\geqs0}H_{\hbl_q}(\delta_q\tilde\psi(t)-\delta_q\hat\psi(t), 
c_q-\hat V_q(t))}_{L^2}\leqs \bigpar{c_1\gamma_0}^{\abs{\hbl}/2}\sum_{\hbp:\hbp 
\leqs\lfloor\hbl/2\rfloor}A_{\hbl\hbp}\hat c(t)^{\abs{\hbp}}
\prod_{q\geqs0}\bignorm{\delta_q\hat\psi(t,\cdot)}^{\hbl_q-2\hbp_q}_{L^2}\;,
\end{equation}
with
\begin{equation}
\hat c(t) = \sup_{q\geqs0} \hat c_q(t)
\qquad\text{and}\qquad 
A_{\hbl\hbp} = \frac{\hbl!}{2^{\abs{\hbp}} \hbp! (\hbl-2\hbp)!}
2^{2\q(\abs{\hbl} - 2\abs{\hbp})}\;.
\end{equation} 
Notice that 
\begin{equation}
\bigabs{ c_q-\hat V_q(t)}= \bigabs{\hat c_q(t)}
\bigabs{\e^{-2\alpha_{k(q)}(u_{l+1},t)/\eps} - 1 - \Order{\gamma_0 2^{-2(\q - 
q)}}} \leqs c_0\gamma_0. 
\end{equation}
Choosing $\gamma_0$ small enough in the sense taken in Proposition 
\ref{prop:delta_q0_phi_main} yields the result.

Finally, similar results holds for processes $\tilde\psi(t,x)$ with another 
initial condition. In particular, when it is equal to $0$, one can proceed 
exactly in the same way and bound the variance of the associated martingale by 
$\hat V_q(t)$.

%%%%%%%%%%%%%%%%%%%%%%%%%%%%%%%%%%%%%%%%%%%%%%%%%%%%%%%%%%%%%%%%%%%%%%%%%%%%%%

\section{Proof of the concentration results}
\label{sec:proofs_concentration}

\subsection{Proof of Proposition~\ref{prop:stable_det}}

The proof of Proposition~\ref{prop:stable_det} is almost the same as the proof 
of Proposition 2.3 in~\cite{Berglund_Nader_22}, the only difference being that 
$x$ belongs to the two-dimensional torus. We give here only a few hints of the 
proof. 
Recall that the proposition concerns the deterministic equation
\begin{equation}
%\label{eq:SPDE_det} 
 \6\phi(t,x) = \frac{1}{\eps}\bigbrak{\Delta\phi(t,x) + F(t, \phi(t,x))} \6t\;, 
\end{equation} 
where $F$ satisfies \eqref{eq:def_F} and Assumption \ref{assump:stable}. 
We consider the difference $\psi(t)=\phi(t)-\phi^*(t)e_0$. Using 
Taylor's formula to expand $ F(t, \psi(t)+\phi^*(t)e_0)$, we obtain that $\psi$ 
satisfies the equation 
\begin{equation}
\label{eq:SPDE_det_psi}
\eps \partial_t \psi(t,x) = \Delta\psi(t,x) + a(t)\psi(t,x) + b(t,\psi(t,x))-\eps\frac{\6}{\6t}\phi^*(t)e_0(x)\;,
\end{equation} 
where
\begin{align}
a(t) &= \partial_\phi F(t,\phi^*(t)e_0)\;, \\
b(t,\psi) &= \frac{1}{2}\partial_\phi^2 F\big(t,\phi^*(t)+\theta 
\psi\big)\psi^2 \qquad\text{ for some } \theta \in \brak{0,1}\;.
\end{align}
We define the Lyapounov function 
\begin{equation}
\label{eq:Lyapunov}
 V(\psi) 
 = \frac12 \norm{\psi}_{H^1}^2 = \frac12 \norm{\psi}_{L^2}^2 + 
\frac{L^2}{2\pi^2} \norm{\nabla\psi}_{L^2}^2\;. 
\end{equation} 
Its time derivative satisfies
\begin{equation}
\label{eq:dtV_det_stable1} 
 \eps\frac{\6}{\6t} V (\psi) \leqs 2a(t) V(\psi) + \pscal{\psi}{b(t,\psi)} 
 - \frac{L^2}{\pi^2}\pscal{\Delta\psi}{b(t,\psi)} - \eps \frac{\6}{\6t} 
\phi^*(t) \pscal{\psi}{e_0}\;.
\end{equation} 
We introduce for a fixed  $C_0>0$, $\bar\tau$, the first exist-time from 
the set $\bigset{ V(\psi(t,\cdot)) \leqs C_0}$.
Then, we bound the different terms in \eqref{eq:dtV_det_stable1} similarly to 
the proof in~\cite{Berglund_Nader_22}. We arrive at the relation
\begin{equation}
 \eps \dot V \leqs -\frac12 C_1V + \eps C_2 V^{1/2}
\end{equation} 
for all $t \leqs \bar\tau$, and some constants $C_1, C_2> 0$. Using Gronwall's 
inequality, we find that there exists a particular solution  satisfying 
$V(\psi(t,\cdot)) = \Order{\eps^2}$ for all $t \leqs \bar\tau$. The result 
extends for all $t\in I$.

\subsection{Proof of Lemma~\ref{lem:F0}}

The binomial formula for Hermite polynomials yields 
\begin{equation}
 \Wick{\phi(t,x)^i} = \sum_{j=0}^i \binom{i}{j} \bar\phi(t,x)^{i-j} 
\Wick{\phi_0(t,x)^j}\;. 
\end{equation} 
Using the definition~\eqref{eq:def_F} of $F$ and swapping the sums, we obtain 
\begin{equation}
 \Wick{F_0(t,x,\phi_0(t,x))}
 = \sum_{j=1}^n \biggbrak{\sum_{i=j}^n \binom{i}{j} A_i(t) \bar\phi(t,x)^{i-j}}
 \Wick{\phi_0(t,x)^j}
\end{equation} 
(note that the terms $j=0$ cancel). This proves the claim for the terms with 
$j\geqs 2$. For $j=1$, we note that 
\begin{equation}
 a(t)  = \partial_\phi F(t,\phi^*(t))
 = \sum_{i=1}^n i A_i(t) \phi^*(t)^{i-1}\;.
\end{equation} 
Rearranging terms yields the claimed result. Proposition~\ref{prop:stable_det} 
shows that $\bar\phi(t,\cdot) \in H^1$, and that $\smallnorm{\hat 
A_1(t,\cdot)}_{H^1} = \Order{\eps}$. By 
\cite[Th\'eor\`eme~7]{Bourdaud_calcul_symbolique}, powers of $\bar\phi$ belong 
to $H^1$ as well. 
\qed

\subsection{Proof of Theorem~\ref{thm:phi1}}

Recall that $\phi_1(t,x)$ solves the equation  
\begin{equation}
\label{eq:phi1_repeated} 
 \6\phi_1(t,x) 
 = \frac{1}{\eps} \bigbrak{\Delta\phi_1(t,x) + a(t)\phi_1(t,x) 
 + \Wick{b(t,x,\psi(t,x)+\phi_1(t,x))}}\6t\;,
\end{equation} 
where $\psi(t,x)$ is the stochastic convolution, and 
\begin{equation}
\label{eq:Wickb} 
 \Wick{b(t,x,\psi(t,x)+\phi_1(t,x))}
 = \sum_{j=1}^n \hat A_j(t,x) 
 \sum_{\ell=0}^j \binom{j}{\ell} \phi_1(t,x)^{j-\ell} \Wick{\psi(t,x)^\ell}\;.
\end{equation} 
Assume that $\psi(t,\cdot)\in\Besovspace{\alpha}{2}{\infty}$ and 
$\phi_1(t,\cdot)\in\Besovspace{\beta}{2}{\infty}$ for all $t\in[0,T]$. 
The bound~\eqref{eq:bound2_product_Besov} on products in Besov spaces shows 
that 
\begin{equation}
\smallnormBesov{\phi_1(t,\cdot)^{j-\ell}
\Wick{\psi(t,\cdot)^\ell}}{(2(j-\ell)+1)\alpha}{2}{\infty}
\leqs M_1 \normBesov{\phi_1(t,\cdot)}{\beta}{2}{\infty}^{j-\ell} 
\smallnormBesov{\Wick{\psi(t,\cdot)^\ell}}{\alpha}{2}{\infty}
\end{equation} 
for a constant $M_1 = M_1(\beta,\alpha,n)$, provided $\beta\geqs1+2\alpha$. 

We treat separately the term $j=1$ in the sum~\eqref{eq:Wickb} and the 
remaining terms. For $j=1$, we use the fact that $\hat A_1(t,\cdot) \in 
\Besovspace{1}{2}{\infty}$ has a norm of order $\eps$ 
and~\eqref{eq:bound1_product_Besov} to obtain that  
\begin{equation}
 \smallnormBesov{\hat A_1(t,\cdot)(\phi_1(t,\cdot) + 
\psi(t,\cdot))}{\bar\alpha}{2}{\infty} 
 \leqs M_2 \eps \Bigpar{\normBesov{\phi_1(t,\cdot)}{\beta}{2}{\infty} 
 + \normBesov{\psi(t,\cdot)}{\alpha}{2}{\infty}}
\end{equation} 
for any $\bar\alpha < \alpha$. For $j\geqs2$, we have a similar bound, but 
without the factor $\eps$, since the $\hat A_j$ are of order $1$ in $H^1$. 
Now let $h, H \in(0,1]$ be constants such that 
\begin{equation}
 \max_{1\leqs\ell\leqs n} 
 \smallnormBesov{\Wick{\psi(t,\cdot)^\ell}}{\alpha}{2}{\infty}
 \leqs h\;, \qquad 
 \normBesov{\phi_1(t,\cdot)}{\beta}{2}{\infty} \leqs H\;.
\end{equation} 
Summing over $j$, since $h, H\leqs 1$ we get the existence of constants $M_3, 
M_4$ such that  
\begin{align}
 \smallnormBesov{\Wick{b(t,x,\psi+\phi_1)}}{\bar\alpha}{2}{\infty}
 &\leqs M_2\eps(H+h) + M_3 \biggbrak{H^2 + \sum_{j=2}^n\sum_{\ell=1}^j 
H^{j-\ell}h^\ell} \\
&\leqs M_4(H+h)(H+h+\eps)
\end{align} 
for any $\bar\alpha < (2n-1)\alpha$. 
We now fix a $\gamma < \bar\alpha + 2$ and introduce the stopping time 
\begin{equation}
 \tau = \inf\Bigset{t\in[0,T] \colon 
\normBesov{\phi_1(t,\cdot)}{\gamma}{2}{\infty} > H}\;.
\end{equation} 
Then we have
\begin{align}
 \prob{\tau < T}
 \leqs{}& 
 \biggprob{\exists\ell\in\set{1,\dots,n} \colon \sup_{t\leqs T}
 \smallnormBesov{\Wick{\psi(t,\cdot)^\ell}}{\alpha}{2}{\infty} > h^\ell} \\
 &{}+ \biggprob{\tau < T,\; \sup_{t\leqs T}
 \smallnormBesov{\Wick{\psi(t,\cdot)^\ell}}{\alpha}{2}{\infty} \leqs h^\ell\;
 \forall \ell\in\set{1,\dots,n}}\;.
 \label{eq:proof_phi1_decomp} 
\end{align}
The first term on the right-hand side can be bounded using 
Theorem~\ref{thm:stoch_convolution}. As for the second term, we use the fact 
that under the condition on the Wick powers of the stochastic convolution being 
small, the Schauder estimate given in Corollary~\ref{prop:Schauder_cor} yields 
\begin{equation}
 \normBesov{\phi_1(T\wedge\tau, \cdot)}{\gamma}{2}{\infty} 
 \leqs M\eps^{-\nu}(H+h)(H+h+\eps)\;, 
 \qquad \nu = 1 - \frac{\gamma-\bar\alpha}{2}
\end{equation} 
for a constant $M$. Choosing first $H = 2M\eps^{-\nu}h(h+\eps)$, and 
then $\eps$ small enough and $h < h_0\eps^\nu$ for a sufficiently small $h_0$, 
one can ensure that $(H+h)(H+h+\eps) < 2h(h+\eps)$, so that the second 
probability is actually equal to zero. 

To conclude the proof, we first pick a $\gamma < 2$, and then $\bar\alpha \in  
(\gamma-2, 0)$, and finally $\alpha \in (\frac{\bar\alpha}{2n-1}, 0)$. We also 
require that $\beta \leqs \gamma$, which is possible by choosing $\beta = 
1+2\alpha$ and asking that $\alpha \leqs -\frac12(1-\gamma)$. This yields the 
claimed result, thanks to the embedding 
$\Besovspace{\gamma}{2}{\infty} \hookrightarrow 
\Besovspace{\gamma-1}{\infty}{\infty} = \cC^{\gamma-1}$.   
\qed

\subsection{Proof of Theorem~\ref{thm:phi1perp}}

The proof is very similar to the proof of Theorem~\ref{thm:phi1}, so we only 
comment on the differences. Given $\alpha < 0$ and $H_\perp > 0$, we introduce 
stopping times 
\begin{align}
 \tau_\psi(h) &= \inf\Bigset{t\in[0,T] \colon \max_{1\leqs\ell\leqs 3} 
 \smallnormBesov{\Wick{\psi_\perp(t,\cdot)^\ell}}{\alpha}{2}{\infty} > h}\;, \\
 \tau_\perp(H_\perp) &= \inf\Bigset{t\in[0,T] \colon 
 \smallnormBesov{\phi_1^\perp(t,\cdot)}{\gamma}{2}{\infty} > 
H_\perp}\;.
\end{align}
For any $\bar\alpha < 5\alpha$, writing $\tau = \tau_\psi(h) \wedge 
\tau_\perp(H_\perp) \wedge \tau_0(H_0)$, one obtains the existence of a constant 
$M$ such that, for any $t \leqs \tau$, one has 
\begin{equation}
 \smallnormBesov{\Wick{F_\perp(\psi_\perp(t,\cdot), 
\phi_1^0(t,\cdot), \phi_1^\perp(t,\cdot))}}{\bar\alpha}{2}{\infty} 
 \leqs M(h + H_0 + H_\perp)^3\;.
\end{equation} 
Using Duhamel's formula to write the solution of~\eqref{eq:bif_phi1perp} in 
integral form, and the Schauder estimate in Corollary~\ref{prop:Schauder_cor} 
(adapted to the eigenvalues of the new linear part), one obtains 
\begin{equation}
 \smallnormBesov{\phi_1^\perp(t \wedge \tau)}{\gamma}{2}{\infty}
 \leqs M_1 \eps^{-\nu} (h + H_0 + H_\perp)^3
\end{equation} 
for $\nu < 1 - \frac12(\gamma - \bar\alpha)$ and a constant $M_1(\nu) > 0$, 
provided $1 + 2\alpha\geqs\gamma$. 
Then it suffices to decompose the probability as 
in~\eqref{eq:proof_phi1_decomp}. Choosing $H_\perp = 2M_1\eps^{-\nu}(h + 
H_0)^3$ and $h + H_0$ of order $\eps^{\nu/2}$ yields the result. 
\qed

\subsection{Proof of Theorem~\ref{thm:phi10}}

The proof is essentially the same as the proof 
of~\cite[Theorem~2.10]{BG_pitchfork} and~\cite[Proposition~4.7]{BG_pitchfork}, 
except that one has to account for the effect of the extra term 
$F_0(\psi_\perp,\phi_1^0,\phi_1^\perp)$ in the equation. The solution 
of~\eqref{eq:bif_phi10} admits the integral representation 
\begin{equation}
\label{eq:proof_thm_phi10} 
 \phi_1^0(t) 
 = \phi^\circ(t) 
 + \frac{1}{\eps} \int_0^t \e^{\alpha(t,t_1)/\eps}
 \bigbrak{-(\phi_1^0(t_1))^3 + 
F_0(\psi_\perp(t_1),\phi_1^0(t_1),\phi_1^\perp(t_1))} \6t_1\;,
\end{equation} 
where $\phi^\circ$ is the solution of the linear equation~\eqref{eq:phicirc}. 
Proposition~4.3 in~\cite{BG_pitchfork} provides a similar estimate 
as~\eqref{eq:pitchfork_bound1} for $\phi^\circ$. Furthermore, up to time 
$\tau_{\cB_-(h_-)}\wedge\tau_\psi(h) \wedge \tau_\perp(H_\perp)$, we have the 
bound 
\begin{equation}
 \bigabs{-(\phi_1^0(t_1))^3 
+ F_0(\psi_\perp(t_1),\phi_1^0(t_1),\phi_1^\perp(t_1))}
 \leqs M
 \biggpar{\frac{h_-}{\eps^{1/4}} + h + H_\perp}^3
\end{equation} 
for a constant $M$. The integral over $t_1$ in~\eqref{eq:proof_thm_phi10} 
yields an extra factor $1/\sqrt{\eps}$. This allows to bound the supremum of 
$\phi_1^0(t)$ in terms of the supremum of $\phi^\circ(t)$ on the event 
\begin{equation}
 \Omega_{h,H_\perp} = \bigset{\tau_\psi(h) \wedge \tau_\perp(H_\perp) > 
\tau_{\cB_-(h_-)}}\;.
\end{equation} 
The probability of the complement $\smash{\Omega_{h,H_\perp}^c}$ can be 
estimated by Theorems~\ref{thm:stoch_convolution} and~\ref{thm:phi1perp}. 
Choosing $H_0 = h_-\eps^{-1/4}$, $h = h_-$ and $H_\perp = \eps^{-\nu}(h + 
H_0)^3$, one finds that $\fP(\Omega_{h,H_\perp}^c)$ is negligible with respect 
to the probability of $\phi^\circ$ leaving $\cB_-(h_-[1 - 
\Order{h_-^2/\eps}])$, which proves~\eqref{eq:pitchfork_bound1}.  

To prove~\eqref{eq:pitchfork_bound2}, we use the fact that for 
$t \leqs \tau_{\cB_+(h_+)}\wedge\tau_\psi(h) \wedge \tau_\perp(H_\perp)$, one 
has 
\begin{align}
\bigabs{-(\phi_1^0(t_1))^3 
+ F_0(\psi_\perp(t_1),\phi_1^0(t_1),\phi_1^\perp(t_1))}
& \leqs M_1
 \biggpar{\frac{h_+}{a(t_1)^{1/2}} + h + H_\perp}^3 \\
& \leqs M_2
 \biggpar{\frac{h_+}{\eps^{1/4}} + h + H_\perp}^3
\end{align} 
for constants $M_1, M_2$. This motivates the choice 
$h = H_\perp = \eps^{-1/4}h_+$. Proceeding as in the proof 
of~\cite[Proposition~4.7]{BG_pitchfork}, one obtains 
\begin{equation}
 \bigprob{\tau_{\cB_+(h_+)} > t, \tau_\psi(h) \vee \tau_\perp(H_\perp) \geqs t}
 \leqs \frac{h_+}{\sigma} \exp\biggset{-\kappa \frac{\alpha(t,t^*)}{\eps}}\;.
\end{equation} 
The second term on the right-hand side of~\eqref{eq:pitchfork_bound2} bounds 
$\prob{\tau_\psi(h) \vee \tau_\perp(H_\perp) < t}$.
\qed

%%%%%%%%%%%%%%%%%%%%%%%%%%%%%%%%%%%%%%%%%%%%%%%%%%%%%%%%%%%%%%%%%%%%%%%%%%%%%%

\appendix
\section{Besov spaces}
\label{app:Besov} 

\begin{lemma}
\label{lem:Besov_test} 
Let $\eta: \T^2\to\R$ be a compactly supported function of class $\cC^1$, with 
$\norm{\eta}_{\cC^1} = 1$. For any $p\in[2,\infty]$ and any $\rho\in(0,1]$, let 
\begin{equation}
 \eta^{(p)}_{\rho}(x) = \frac{1}{\rho^{2(1-1/p)}} 
\eta\biggpar{\frac{x}{\rho}}\;.
\end{equation} 
Then $\smallnorm{\eta^{(p)}_{\rho}}_{L^r} = \norm{\eta}_{L^r}$ for all 
$\rho\in(0,1]$, where $r = (1-1/p)^{-1}$ is the H\"older conjugate of $p$.
Moreover, for any $\psi \in \Besovspace{\alpha}{p}{\infty}$ with 
$\alpha\in(-1,0)$, and any $q\in\N_0$, one has 
\begin{equation}
 \bigabs{\pscal{\psi}{\eta^{(p)}_{2^{-q_0}}}}
 \lesssim 2^{\abs{\alpha}q_0} \normBesov{\psi}{\alpha}{p}{\infty}\;.
\end{equation} 
\end{lemma}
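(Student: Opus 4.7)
The first claim is a direct scaling computation. Since $1 - 1/p = 1/r$, we have $\eta^{(p)}_\rho(x) = \rho^{-2/r}\eta(x/\rho)$, and changing variables $y = x/\rho$ in $\int_{\T^2}\abs{\eta^{(p)}_\rho(x)}^r\6x$ produces a Jacobian $\rho^2$ that exactly compensates the $\rho^{-2}$ coming from raising the prefactor to the power $r$. Hence $\smallnorm{\eta^{(p)}_\rho}_{L^r} = \norm{\eta}_{L^r}$ for every $\rho\in(0,1]$, and in particular this is bounded by $\norm{\eta}_{\cC^1}=1$.

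For the second claim, the plan is to perform a Littlewood--Paley decomposition
\begin{equation}
 \pscal{\psi}{\eta^{(p)}_{2^{-q_0}}}
 = \sum_{q\geqs0} \pscal{\delta_q\psi}{\delta_q\eta^{(p)}_{2^{-q_0}}}\;,
\end{equation}
using that the sharp Fourier projectors $\delta_q$ are self-adjoint and pairwise orthogonal, and then to split the sum at the threshold $q=q_0$ corresponding to the characteristic frequency of the bump. For $q\leqs q_0$, H\"older's inequality together with Part~1 gives
\begin{equation}
 \bigabs{\pscal{\delta_q\psi}{\delta_q\eta^{(p)}_{2^{-q_0}}}}
 \lesssim \norm{\delta_q\psi}_{L^p}\norm{\eta}_{L^r}
 \leqs 2^{\abs{\alpha}q}\normBesov{\psi}{\alpha}{p}{\infty}\;,
\end{equation}
so the low-frequency geometric series $\sum_{q\leqs q_0}2^{\abs{\alpha}q}\asymp 2^{\abs{\alpha}q_0}$ contributes the correct order. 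For $q>q_0$, the gain must come from the smoothness of $\eta$: a Bernstein-type inequality combined with the same scaling identity as Part~1 applied to $\nabla\eta$, namely $\smallnorm{\nabla\eta^{(p)}_{2^{-q_0}}}_{L^r}=2^{q_0}\norm{\nabla\eta}_{L^r}$, yields $\smallnorm{\delta_q\eta^{(p)}_{2^{-q_0}}}_{L^r}\lesssim 2^{q_0-q}$. Each term is thus bounded by $2^{\abs{\alpha}q+q_0-q}\normBesov{\psi}{\alpha}{p}{\infty}$, and the resulting series $\sum_{q>q_0}2^{(\abs{\alpha}-1)q}$ converges precisely because $\alpha>-1$, summing to order $2^{\abs{\alpha}q_0}$ as well.

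The main technical obstacle is justifying the Bernstein estimate $\smallnorm{\delta_q f}_{L^r}\lesssim 2^{-q}\norm{\nabla f}_{L^r}$ for the sharp Fourier cutoff used in Definition~\ref{norm_besov}. This is standard for $1<r<\infty$ via $L^r$-boundedness of the associated Fourier multiplier, and can be handled at the endpoint $r=1$ (i.e.\ $p=\infty$) either by passing to an equivalent smooth Littlewood--Paley partition, or by exploiting the fact that the Fourier coefficients of the bump $\eta^{(p)}_{2^{-q_0}}$ are explicit and decay fast enough to give the bound $2^{q_0-q}$ directly. The zero-frequency block $q=0$ is harmless, since $\cA_0=\set{(0,0)}$ reduces the pairing to a product of two explicit constants controlled by $\normBesov{\psi}{\alpha}{p}{\infty}$.
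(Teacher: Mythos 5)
Your argument is correct in substance and shares the paper's skeleton (block decomposition $\pscal{\psi}{\eta^{(p)}_{2^{-q_0}}}=\sum_q\pscal{\delta_q\psi}{\delta_q\eta^{(p)}_{2^{-q_0}}}$, H\"older in each block, sum dominated near $q=q_0$), but the key technical ingredient is different. The paper never invokes boundedness of the sharp projectors or a Bernstein inequality: it computes the Fourier coefficients of the rescaled bump directly, $\abs{\pscal{e_k}{\eta^{(p)}_{2^{-q_0}}}}\leqs 2^{-2q_0/p}/(1\vee 2^{2(q-q_0)})$ via one integration by parts, and then bounds $\smallnorm{\delta_q\eta^{(p)}_{2^{-q_0}}}_{L^r}$ by an $\ell^p$-expression in these coefficients; this gives a decay $2^{-2(q-q_0)/r}$ above the bump frequency, uniformly in the parameters, and works for every $p\in[2,\infty]$ in one stroke. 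Your route instead gains one derivative through Bernstein, giving $2^{-(q-q_0)}$, which suffices exactly because $\abs{\alpha}<1$; it is more modular and arguably more standard, but it is tied to $1<r<\infty$.

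This is where the one genuine (if minor) gap sits: you flag the endpoint $r=1$ (i.e.\ $p=\infty$) only for the Bernstein step, but your low-frequency estimate $\abs{\pscal{\delta_q\psi}{\delta_q\eta^{(p)}_{2^{-q_0}}}}\lesssim\norm{\delta_q\psi}_{L^p}\norm{\eta}_{L^r}$ also uses the uniform $L^r$-boundedness of the sharp cutoff $\delta_q$, which fails at $r=1$ (the Dirichlet kernel costs a factor growing in $q$, and summing $q\leqs q_0$ would then spoil the clean $2^{\abs{\alpha}q_0}$). Since the lemma is stated for all $p\in[2,\infty]$, you should either treat $p=\infty$ throughout by your proposed fallback --- explicit decay of the Fourier coefficients of the bump, which is precisely the paper's mechanism and handles low and high frequencies simultaneously --- or restrict the Bernstein/projector route to $2\leqs p<\infty$ and say so. For the case actually used in the paper ($p=2$, where $\delta_q$ is an orthogonal projection), your proof is complete as written.
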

\begin{proof}
We have 
\begin{equation}
 \smallnorm{\eta^{(p)}_{\rho}}_{L^r}^r 
 = \frac{1}{\rho^2} \int_{\T^2} \eta\biggpar{\frac{x}{\rho}}^r \6x 
 = \int_{\T^2} \eta(y)^r \6y
 = \norm{\eta}_{L^r}^r\;,
\end{equation} 
where we have used the change of variables $x = \rho y$, and the fact that the 
integration domain does not change because $\eta$ is compactly supported. For 
the same reason, we have 
\begin{equation}
 \bigabs{\pscal{e_k}{\eta^{(p)}_\rho}} 
 = \rho^{2/p} \biggabs{\int_{\T^2} \e^{-\icx\rho k\cdot y} \eta(y)\6y}
 \leqs \frac{\rho^{2/p}}{1\vee\rho^2\abs{k_1k_2}}\;,
\end{equation} 
where we have used one integration by parts if $\rho^2\abs{k_1k_2} \leqs 1$. 
In particular, for $k\in\cA_q$ and $\rho = 2^{-q_0}$, this yields 
\begin{equation}
 \bigabs{\pscal{e_k}{\eta^{(p)}_{2^{-q_0}}}} 
 \leqs \frac{2^{-2q_0/p}}{1\vee2^{2(q-q_0)}}\;.
\end{equation} 
Using H\"older's inequality, we obtain 
\begin{align}
 \bigabs{\pscal{\delta_q\psi}{\eta^{(p)}_{2^{-q_0}}}}
 &= \bigabs{\pscal{\delta_q\psi}{\delta_q\eta^{(p)}_{2^{-q_0}}}} 
 \leqs \norm{\delta_q\psi}_{L^p} 
\smallnorm{\delta_q\eta^{(p)}_{2^{-q_0}}}_{L^r}\\
 &\leqs \norm{\delta_q\psi}_{L^p} 
 \biggpar{\sum_{k\in\cA_q} \bigabs{\pscal{e_k}{\eta^{(p)}_{2^{-q_0}}}}^p}^{1/p} 
\\
 &\leqs 2^{\abs{\alpha}q} \normBesov{\psi}{\alpha}{p}{\infty}
 \frac{2^{2(q-q_0)/p}}{1\vee2^{2(q-q_0)}}\;.
\end{align}
The result then follows by summing over all $q\in\N_0$, noticing that this sum 
is dominated by the term $q = q_0$. 
\end{proof}

\begin{prop}[Schauder estimate on the heat kernel]
\label{prop:Schauder} 
Let $g \in \Besovspace{\alpha}{2}{\infty}$ for some $\alpha\in\R$, and let 
$\e^{t\Delta}$ denote the heat kernel. Then there exists a constant 
$M_0$ depending on $\beta-\alpha$ such that 
\begin{equation}
 \normBesov{\e^{t\Delta}g}{\beta}{2}{\infty}
 \leqs M_0 t^{-\frac{\beta-\alpha}{2}} 
 \normBesov{g}{\alpha}{2}{\infty}
\end{equation} 
holds for all $t\geqs0$ and all $\beta \leqs \alpha+2$. 
\end{prop}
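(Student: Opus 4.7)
My plan is to prove this by Fourier-analytic calculation, exploiting the $L^2$-structure of the Besov space $\Besovspace{\cdot}{2}{\infty}$. The heat kernel $\e^{t\Delta}$ is diagonal in the Fourier basis, acting on $e_k$ by multiplication by $\e^{-t\mu_k}$ with $\mu_k = (2\pi)^2\norm{k}^2$, so writing $g = \sum_k g_k e_k$ gives $\delta_q(\e^{t\Delta}g)(x) = \sum_{k\in\cA_q} \e^{-t\mu_k} g_k e_k(x)$, and Plancherel yields
\begin{equation}
 \norm{\delta_q(\e^{t\Delta}g)}_{L^2}^2 = \sum_{k\in\cA_q} \e^{-2t\mu_k} \abs{g_k}^2\;.
\end{equation}

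The key estimate on the annulus $\cA_q$ for $q\geqs1$ is the lower bound $\mu_k \geqs c\, 2^{2q}$ for some absolute constant $c>0$, which follows from the relation between the $\ell^1$-radii defining $\cA_q$ (i.e.\ $\abs{k_1}+\abs{k_2}\geqs 2^{q-1}$) and the Euclidean norm. Pulling $\e^{-ct 2^{2q}}$ out of the sum and using Plancherel again, I obtain $\norm{\delta_q(\e^{t\Delta}g)}_{L^2} \leqs \e^{-ct 2^{2q}} \norm{\delta_q g}_{L^2}$. Multiplying both sides by $2^{q\beta}$ and splitting $2^{q\beta} = 2^{q(\beta-\alpha)} \cdot 2^{q\alpha}$, I get
\begin{equation}
 2^{q\beta}\norm{\delta_q(\e^{t\Delta}g)}_{L^2}
 \leqs 2^{q(\beta-\alpha)} \e^{-ct2^{2q}} \, \normBesov{g}{\alpha}{2}{\infty}\;.
\end{equation}

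The remaining step is purely analytical: bounding $\sup_{q\geqs1} 2^{q(\beta-\alpha)} \e^{-ct 2^{2q}}$ by $M_0 t^{-(\beta-\alpha)/2}$. The substitution $s = ct 2^{2q}$ gives
\begin{equation}
 2^{q(\beta-\alpha)} \e^{-ct 2^{2q}} = (s/ct)^{(\beta-\alpha)/2} \e^{-s} = (ct)^{-(\beta-\alpha)/2} s^{(\beta-\alpha)/2} \e^{-s}\;,
\end{equation}
and the assumption $\beta-\alpha\leqs 2$ ensures that $p := (\beta-\alpha)/2 \leqs 1$, so $\sup_{s>0} s^{p}\e^{-s} = p^{p}\e^{-p}$ is a finite constant depending only on $\beta-\alpha$ (when $\beta<\alpha$, the function $s\mapsto s^{p}\e^{-s}$ is decreasing and the supremum is achieved at the lower end $s = 4ct$, which once substituted back gives a harmless bounded factor). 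The case $q=0$ is immediate since $\mu_0 = 0$ makes $\delta_0(\e^{t\Delta}g) = \delta_0 g$, so that contribution is bounded by $\normBesov{g}{\alpha}{2}{\infty}$ and can be absorbed into the main bound up to adjusting $M_0$.

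There is no substantial obstacle; the argument is a standard Bernstein/spectral computation and what little care is needed lies in tracking constants and disposing of the $q=0$ mode cleanly. The only mildly subtle point is ensuring that the supremum in the final calculus step is uniform in $q$, which is what the localization of frequencies to dyadic annuli $\cA_q$ delivers for free.
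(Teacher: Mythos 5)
Your argument is correct and is essentially the paper's own proof: diagonalise the heat semigroup in Fourier, use $\mu_k\gtrsim 2^{2q}$ on the annulus $\cA_q$ to get $\norm{\delta_q(\e^{t\Delta}g)}_{L^2}\leqs \e^{-ct2^{2q}}\norm{\delta_q g}_{L^2}$, and conclude by the boundedness of $x\mapsto x^{(\beta-\alpha)/2}\e^{-x}$, exactly as in the paper. Your separate treatment of the $q=0$ mode (where $\mu_0=0$) is in fact slightly more careful than the paper's blanket claim \lq\lq for all $q$\rq\rq, and is harmless in the regime $\alpha\leqs\beta$, $t\lesssim1$ in which the estimate is used.
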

\begin{proof}
Denoting by $\mu_k$ the eigenvalues of the Laplacian 
(cf.~\eqref{eq:ev_Laplacian}) and by $(g_k)_{k\in\Z^2}$ the Fourier 
coefficients of $g$, there is a constant $c>0$ such that
\begin{equation}
 \norm{\delta_q(\e^{t\Delta}g)}_{L^2}^2 
 = \sum_{k\in\cA_q} \e^{-2\mu_kt} \abs{g_k}^2 
 \leqs \e^{-c2^{2q}t} \norm{\delta_q g}_{L^2}^2 
 \leqs \e^{-c2^{2q}t} 2^{-2q\alpha} \normBesov{g}{\alpha}{2}{\infty}^2
\end{equation} 
for all $q$. Therefore, 
\begin{equation}
 \normBesov{\e^{t\Delta}g}{\beta}{2}{\infty}
 \leqs\sup_{q\geqs0} 2^{q(\beta-\alpha)} \e^{-\frac12 c2^{2q}t} 
 \normBesov{g}{\alpha}{2}{\infty}\;.
\end{equation} 
Now we observe that for any $\gamma\geqs0$,  
\begin{equation}
 2^{q(\beta-\alpha)} \e^{-\frac12 c2^{2q}t}
 = 2^{q(\beta-\alpha-2\gamma)}t^{-\gamma}
 (2^{2q}t)^\gamma \e^{-\frac12 c2^{2q}t}
 \leqs M_0(2\gamma) 2^{q(\beta-\alpha-2\gamma)}t^{-\gamma}
\end{equation} 
by boundedness of the map $x\mapsto x^\gamma\e^{-x}$.
Choosing $\gamma = \frac{\beta-\alpha}{2}$ yields the result. 
\end{proof}

\begin{cor}[Schauder estimate on convolutions with the heat kernel]
\label{prop:Schauder_cor} 
Let $g(t) \in \Besovspace{\alpha}{2}{\infty}$ for all $t\in [0,T]$, where 
$\alpha\in\R$. Let $\phi$ be the solution of 
\begin{equation}
\label{eq:phi_Schauder} 
 \6\phi(t,x) = \frac{1}{\eps} \bigbrak{\Delta\phi(t,x) + a(t)\phi(t,x) + 
g(t,x)}\6t\;,
\end{equation}
starting from $0$, where $a\in\cC^1([0,T],\R_-)$ is bounded away from $0$ 
(cf.~\eqref{eq:a(t)}). Then $\phi(t) \in \Besovspace{\beta}{2}{\infty}$ for 
all $\beta < \alpha+2$ and all $t\in [0,T]$, and there is a constant 
$M = M(\beta-\alpha)$ such that 
\begin{equation}
 \normBesov{\phi(t,\cdot)}{\beta}{2}{\infty} 
 \leqs M \eps^{\frac{\beta-\alpha}{2}-1}
 \sup_{t_1\in[0,T]} \normBesov{g(t_1,\cdot)}{\alpha}{2}{\infty} 
\end{equation} 
holds for all $\beta < \alpha+2$ and all $t\in[0,T]$. 
\end{cor}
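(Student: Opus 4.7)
\textbf{Proof proposal for Corollary~\ref{prop:Schauder_cor}.}

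The plan is to write $\phi(t)$ via Duhamel's formula in terms of the heat semigroup $e^{\tau\Delta/\eps}$ composed with the scalar exponential coming from $a(\cdot)$, then apply the Schauder estimate of Proposition~\ref{prop:Schauder} pointwise inside the time integral, and finally evaluate the resulting integral via a change of variables.

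First, since $\Delta$ commutes with the multiplication operator $a(t)\,\mathrm{Id}$ (the latter being scalar), the propagator of the homogeneous equation $\partial_t\psi = \eps^{-1}(\Delta+a(t))\psi$ factors as
\begin{equation}
 U(t,s) = \e^{(t-s)\Delta/\eps}\exp\Bigpar{\eps^{-1}\textstyle\int_s^t a(u)\6u}\;.
\end{equation}
Duhamel's formula with zero initial condition then yields
\begin{equation}
 \phi(t,\cdot) = \frac{1}{\eps}\int_0^t U(t,s)\,g(s,\cdot)\6s\;.
\end{equation}
Because $a\in\cC^1([0,T],\R_-)$, the scalar prefactor $\exp(\eps^{-1}\int_s^t a(u)\6u)$ is bounded by $1$ for $0\leqs s\leqs t$; we can in fact use the sharper bound $\e^{-a_-(t-s)/\eps}$, but this is not needed to obtain the stated estimate.

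Next, set $G = \sup_{s\in[0,T]}\normBesov{g(s,\cdot)}{\alpha}{2}{\infty}$ and apply the triangle inequality in $\Besovspace{\beta}{2}{\infty}$. For each fixed $s<t$, Proposition~\ref{prop:Schauder} applied with time parameter $(t-s)/\eps$ gives
\begin{equation}
 \smallnormBesov{\e^{(t-s)\Delta/\eps}g(s,\cdot)}{\beta}{2}{\infty}
 \leqs M_0\biggpar{\frac{t-s}{\eps}}^{-(\beta-\alpha)/2}
 \normBesov{g(s,\cdot)}{\alpha}{2}{\infty}\;,
\end{equation}
valid for $\beta\leqs\alpha+2$. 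Combining with the bound on the scalar exponential,
\begin{equation}
 \normBesov{\phi(t,\cdot)}{\beta}{2}{\infty}
 \leqs \frac{M_0 G}{\eps}\int_0^t\biggpar{\frac{t-s}{\eps}}^{-(\beta-\alpha)/2}\6s\;.
\end{equation}

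The change of variables $u=(t-s)/\eps$ turns the right-hand side into
\begin{equation}
 M_0 G\int_0^{t/\eps} u^{-(\beta-\alpha)/2}\6u
 = \frac{M_0}{1-(\beta-\alpha)/2}\, G\,\biggpar{\frac{t}{\eps}}^{1-(\beta-\alpha)/2}\;,
\end{equation}
which is finite under the strict inequality $\beta<\alpha+2$. Bounding $t\leqs T$ and absorbing $T^{1-(\beta-\alpha)/2}$ together with the numerical prefactor into a constant $M=M(\beta-\alpha,T)$ gives exactly
\begin{equation}
 \normBesov{\phi(t,\cdot)}{\beta}{2}{\infty}
 \leqs M\,\eps^{(\beta-\alpha)/2-1}\,G\;,
\end{equation}
as claimed. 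There is no real obstacle here: the only point requiring a modicum of care is keeping track of the two $\eps$-factors (one from the $\eps^{-1}$ in front of the Duhamel integral, one from the time-scaling of the heat kernel), which combine to give the exponent $(\beta-\alpha)/2-1$ in $\eps$.
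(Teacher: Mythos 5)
Your proof is correct and follows essentially the same route as the paper: Duhamel's formula with the propagator factored into the heat semigroup times the scalar exponential $\e^{\alpha(t,t_1)/\eps}\leqs 1$ (using $a\leqs 0$), then Proposition~\ref{prop:Schauder} inside the time integral, and the elementary integral computation giving the $\eps^{\frac{\beta-\alpha}{2}-1}$ factor. The only cosmetic difference is that you carry out the change of variables explicitly and note the resulting $T$-dependence of the constant, which the paper leaves implicit.
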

\begin{proof}
The solution of~\eqref{eq:phi_Schauder} can be written as 
\begin{equation}
 \phi(t,x) = \frac{1}{\eps} \int_0^t \e^{\alpha(t,t_1)/\eps} 
 \bigpar{\e^{\frac{t-t_1}{\eps}\Delta}g}(t_1,x)\6t_1\;, 
\end{equation} 
where $\alpha(t,t_1) = \int_{t_1}^t a(t_2)\6t_2$ is negative whenever $t\geqs 
t_1$. Therefore 
\begin{align}
 \normBesov{\phi(t,\cdot)}{\beta}{2}{\infty} 
 &\leqs 
 \frac{1}{\eps} \int_0^t 
\smallnormBesov{\bigpar{\e^{\frac{t-t_1}{\eps}\Delta}g}
(t_1,\cdot)}{\beta}{2}{\infty} \6t_1 \\
 &\leqs 
 \frac{1}{\eps} M_0(\beta-\alpha)\int_0^t
 \biggpar{\frac{t-t_1}{\eps}}^{-\frac{\beta-\alpha}{2}}
 \normBesov{g(t_1,\cdot)}{\alpha}{2}{\infty} \6t_1 \\
 &\leqs
 \eps^{\frac{\beta-\alpha}{2}-1} M_0(\beta-\alpha) 
 \sup_{t_1\in[0,T]} \normBesov{g(t_1,\cdot)}{\alpha}{2}{\infty} 
 \int_0^t (t-t_1)^{-\frac{\beta-\alpha}{2}}\6t_1\;.
\end{align} 
The integral is bounded whenever $\beta < \alpha + 2$. 
\end{proof}

\section{Wick calculus}
\label{app:Wick}

This appendix summarises some properties of Hermite polynomials and Wick 
calculus needed in this work. Proofs of these properties can be found, for 
instance, in the monographs~\cite{nualart2006malliavin,Peccati_Taqqu_book}, the 
lecture notes~\cite{Hairer_Malliavin}, and Section~4.2.2 and Appendix~D 
of~\cite{B-SPDE_book}. 

Hermite polynomials with variance $C$ have been introduced in 
Section~\ref{ssec:Wick}. Some of the above references consider the special case 
$C = 1$, but results for that case can easily be converted into results for the 
general case by using the scaling property
\begin{equation}
 \label{eq:Hermite_scaling}
 H_n(x; C) = C^{n/2} H_n(C^{-1/2}x; 1)\;.
\end{equation} 
The first $n$ Hermite polynomials and the monomials $1, \dots, x^n$ both form a 
basis of the vector space of polynomials of degree $n$, where the change of 
basis is given by the formulas 
\begin{align}
\label{eq:Hermite_change_basis} 
H_n(x; C) &= \sum_{\ell = 0}^{\lfloor n/2 \rfloor} a_{n\ell} C^\ell 
x^{n-2\ell}\;, 
& a_{n\ell} &= \frac{(-1)^\ell n!}{2^\ell \ell! (n-2\ell)!}\;, \\
x^n &= \sum_{\ell = 0}^{\lfloor n/2 \rfloor} b_{n\ell} C^\ell 
H_{n-2\ell}(x; C)\;, 
& b_{n\ell} &= \frac{n!}{2^\ell \ell! (n-2\ell)!} = \abs{a_{n\ell}}\;.
\end{align}
The Hermite polynomials admit the generating function 
\begin{equation}
\label{eq:hermite_generating} 
G(t,x;C) :=
\e^{tx - C t^2/2} = \sum_{n=0}^\infty \frac{t^n}{n!} H_n(x;C)\;,
\end{equation} 
which can be used to establish the following identity.

\begin{lemma}[Expectation of products of Wick powers]
\label{lem:hermite_moments} 
Let $X$ and $Y$ be jointly Gaussian centred random variables, of respective 
variance $C_1$ and $C_2$. Then for any $n,m\geqs0$, one has 
\begin{equation}
\label{eq:Wick_identity} 
 \bigexpec{H_n(X;C_1) H_m(Y;C_2)} = 
 \begin{cases}
  n! \bigexpec{XY}^n & \text{if $n=m$\;,} \\
  0 & \text{otherwise\;.}
 \end{cases}
\end{equation} 
\end{lemma}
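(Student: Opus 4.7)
The plan is to exploit the generating function identity \eqref{eq:hermite_generating} and compute the joint exponential moment $\expec{G(s,X;C_1) G(t,Y;C_2)}$ in two different ways, then match coefficients of $s^n t^m$.

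First I would rewrite the product of generating functions as
\begin{equation}
 G(s,X;C_1) G(t,Y;C_2) = \exp\Bigset{sX + tY - \tfrac12 s^2 C_1 - \tfrac12 t^2 C_2}\;.
\end{equation}
Since $X$ and $Y$ are jointly Gaussian and centred, the linear combination $sX + tY$ is Gaussian with mean $0$ and variance $s^2 C_1 + 2st \expec{XY} + t^2 C_2$. The standard moment generating function formula for a centred Gaussian then gives
\begin{equation}
 \bigexpec{G(s,X;C_1) G(t,Y;C_2)} = \exp\bigset{st\expec{XY}} = \sum_{n=0}^\infty \frac{(st)^n}{n!} \bigexpec{XY}^n\;.
\end{equation}

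Second, expanding each factor using \eqref{eq:hermite_generating} and interchanging expectation with the double sum (justified by absolute convergence, e.g.\ via the Cauchy--Schwarz inequality together with the finiteness of all Gaussian moments, applied to the absolute series), one obtains
\begin{equation}
 \bigexpec{G(s,X;C_1) G(t,Y;C_2)} = \sum_{n,m\geqs0} \frac{s^n t^m}{n!\,m!} \bigexpec{H_n(X;C_1) H_m(Y;C_2)}\;.
\end{equation}
Matching coefficients of $s^n t^m$ in these two analytic expressions yields the claim: the left-hand series contains only diagonal terms $n=m$ with coefficient $\expec{XY}^n/n!$, so the off-diagonal expectations vanish, while on the diagonal we read off $\expec{H_n(X;C_1) H_n(Y;C_2)}/(n!)^2 = \expec{XY}^n/n!$, that is, $n!\,\expec{XY}^n$.

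The argument is essentially mechanical; the only delicate point is the Fubini-type exchange of expectation and infinite summation, which I would handle by noting that $\sum_{n,m} \tfrac{|s|^n |t|^m}{n!m!}\expec{|H_n(X;C_1) H_m(Y;C_2)|}$ is dominated by $\expec{\exp\{|s||X|+|t||Y| + \tfrac12 s^2 C_1 + \tfrac12 t^2 C_2\}}$, which is finite since $X$ and $Y$ are Gaussian. No further obstacle arises.
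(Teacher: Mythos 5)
Your proof is correct, and it follows exactly the route the paper indicates: the lemma is stated right after the generating function \eqref{eq:hermite_generating} with the remark that it "can be used to establish the following identity", and your computation of $\bigexpec{G(s,X;C_1)G(t,Y;C_2)}$ in two ways, with the Fubini step dominated by $\bigexpec{\e^{|s||X|+|t||Y|}}\e^{(s^2C_1+t^2C_2)/2}$, is the standard implementation of that idea (the paper itself defers the details to the cited monographs).
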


Another consequence of the expression~\eqref{eq:hermite_generating} of the 
generating function is the following binomial formula.

\begin{lemma}[Binomial formula for Hermite polynomials]
\label{lem:hermite_binomial}
For any $x,y\in\R$, $C_1,C_2\geqs0$ and $n\in\N_0$, 
\begin{equation}
\label{eq:Hermite_binomial} 
 H_n(x+y; C_1+C_2) = \sum_{m=0}^n \binom{n}{m} 
 H_m(x; C_1)H_{n-m}(y; C_2)\;.
\end{equation} 
\end{lemma}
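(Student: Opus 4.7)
The plan is to prove the identity by exploiting the multiplicative structure of the generating function \eqref{eq:hermite_generating}. The crucial observation is that the exponent in $G(t,x;C) = \e^{tx - Ct^2/2}$ is linear in the pair $(x, C)$, so when the arguments split as $(x+y, C_1+C_2)$ the generating function factorises as a product of two independent generating functions.

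Concretely, I would first write
\begin{equation*}
G(t, x+y; C_1+C_2) = \e^{t(x+y) - (C_1+C_2)t^2/2} = \e^{tx - C_1 t^2/2}\cdot \e^{ty - C_2 t^2/2} = G(t,x;C_1)\,G(t,y;C_2).
\end{equation*}
Then I would expand each of the three generating functions in powers of $t$ using \eqref{eq:hermite_generating} and apply the Cauchy product formula to the right-hand side, obtaining
\begin{equation*}
\sum_{n=0}^\infty \frac{t^n}{n!} H_n(x+y; C_1+C_2)
= \sum_{n=0}^\infty t^n \sum_{m=0}^n \frac{H_m(x;C_1)\,H_{n-m}(y;C_2)}{m!\,(n-m)!}.
\end{equation*}
Matching the coefficients of $t^n$ and multiplying by $n!$ yields exactly \eqref{eq:Hermite_binomial}.

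The only subtlety to check is that the manipulation of coefficients is legitimate: both series converge absolutely for every fixed $t\in\R$ (in fact the series define entire functions of $t$), so the Cauchy product is justified and termwise identification of coefficients is valid. There is no real obstacle here; the proof is essentially a one-line generating-function computation, and the work has already been done by the scaling and generating-function identities recorded just above the lemma.
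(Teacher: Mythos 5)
Your proof is correct and coincides with the paper's intended argument: the lemma is stated there as a direct consequence of the generating function \eqref{eq:hermite_generating}, and your factorisation $G(t,x+y;C_1+C_2)=G(t,x;C_1)\,G(t,y;C_2)$ followed by a Cauchy product and identification of the coefficients of $t^n$ is exactly that standard computation. The convergence remark is fine (one could even bypass it by viewing both sides as polynomial identities, i.e.\ matching coefficients of formal power series in $t$), so there is nothing missing.
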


A direct consequence is Lemma~\ref{lem:Hermite_martingale}, which we recall 
here. We recall that $\hat\psi_k(t)$ is the martingale introduced 
in~\eqref{eq:psi_k_hat}, and that $\hat v_k(t)$ is its variance defined 
in~\eqref{eq:v_k_hat}.

\begin{lemma}[Martingale property]
For any $m\geqs1$, $H_m(\hat\psi_k(t); v_k(t))$ is a martingale with respect to 
the canonical filtration $\cF_t$ of the Wiener process $(W_k(t))_{t\geqs0}$.
\end{lemma}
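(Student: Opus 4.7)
My approach is to exploit the generating function identity~\eqref{eq:hermite_generating}, and to show that for each $\lambda\in\R$ the process
\begin{equation}
 M^\lambda_t := G(\lambda,\hat\psi_k(t);\hat v_k(t)) = \exp\biggpar{\lambda\hat\psi_k(t)-\frac{\lambda^2}{2}\hat v_k(t)}
\end{equation}
is an $\cF_t$-martingale; the martingale property for each $H_n(\hat\psi_k(t);\hat v_k(t))$ will then follow by identifying these quantities as the coefficients of the series expansion of $M^\lambda_t$ in $\lambda$.

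The crucial observation is the identity $\6\hat v_k(t) = \6\langle\hat\psi_k\rangle_t$. To check this I would first differentiate $\hat v_k(t) = v_k \e^{2\alpha_k(u_{l+1},t)/\eps}$ to obtain $\dot{\hat v}_k(t) = \frac{2(\mu_k+1)}{\eps}\hat v_k(t)$. On the other hand, reading off the Itô integrand in~\eqref{eq:psi_k_hat} gives $\6\langle\hat\psi_k\rangle_t = \frac{\sigma^2}{\eps}\e^{2\alpha_k(u_{l+1},t)/\eps}\6t$, and the two agree thanks to the stationarity choice $v_k = \sigma^2/[2(\mu_k+1)]$. Granted this, $M^\lambda_t$ differs from the Doléans--Dade exponential $\exp\bigpar{\lambda\hat\psi_k(t) - \tfrac{\lambda^2}{2}\langle\hat\psi_k\rangle_t}$ only by the $\cF_0$-measurable constant $\exp\bigpar{-\tfrac{\lambda^2}{2}\hat v_k(0)}$, so it is a true martingale (Novikov's condition is trivial since the integrand in the bracket is deterministic and bounded on $[0,T]$).

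Finally, expanding $M^\lambda_t = \sum_{n\geqs0}\frac{\lambda^n}{n!}H_n(\hat\psi_k(t);\hat v_k(t))$ and applying $\E[\,\cdot\mid\cF_s]$ to both sides, I would extract the coefficient of $\lambda^n$ on each side to obtain $\E[H_n(\hat\psi_k(t);\hat v_k(t))\mid\cF_s] = H_n(\hat\psi_k(s);\hat v_k(s))$ for $s\leqs t$. The only step requiring care is the interchange of conditional expectation and the infinite sum, but since $\hat v_k$ is uniformly bounded on $[0,T]$ the Gaussian tails of $\hat\psi_k(t)$ provide ample integrability for a dominated-convergence argument (equivalently, one may apply Itô's formula directly to $H_n(\hat\psi_k(t);\hat v_k(t))$ and use the heat-equation identity $\partial_v H_n + \tfrac12\partial_{xx}H_n = 0$, together with $\6\hat v_k = \6\langle\hat\psi_k\rangle$, to kill the drift). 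I expect this integrability/interchange to be the only nontrivial point; everything else is a direct computation.
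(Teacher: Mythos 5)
Your proof is correct, but it takes a genuinely different route from the paper. The paper's argument is purely algebraic and probabilistic: it writes $\hat\psi_k(t) = \hat\psi_k(s) + (\hat\psi_k(t)-\hat\psi_k(s))$, applies the binomial formula for Hermite polynomials (Lemma~\ref{lem:hermite_binomial}) with the variances $\hat v_k(s)$ and $\hat v_k(t)-\hat v_k(s)$, and then uses that the increment is independent of $\cF_s$ and that $\E[H_j(Y;\variance Y)]=0$ for $j\geqs1$, so that only the $j=0$ term survives the conditional expectation; no stochastic calculus is needed and no infinite sums appear. You instead verify the key identity $\6\hat v_k(t)=\6\langle\hat\psi_k\rangle_t$ (which is correct, and rests on the stationarity choice $v_k=\sigma^2/[2(\mu_k+1)]$ -- the same fact the paper invokes as \lq\lq additivity of the $v_k$\rq\rq), deduce that the generating-function process $\exp\bigpar{\lambda\hat\psi_k(t)-\tfrac{\lambda^2}{2}\hat v_k(t)}$ is a true martingale (Novikov is trivial since the integrand is deterministic and bounded on compact time intervals), and extract the coefficient of $\lambda^n$; your parenthetical variant via It\^o's formula and the backward-heat identity $\partial_C H_n=-\tfrac12\partial_{xx}H_n$ is equally valid, with integrability of the resulting stochastic integral guaranteed by Gaussian moments. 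What each approach buys: the paper's proof is completely elementary and sidesteps both It\^o calculus and the interchange of conditional expectation with an infinite series (which in your route does need the justification you sketch, e.g. $\E\abs{H_n(X;C)}\leqs\sqrt{n!}\,C^{n/2}$ giving absolute convergence, plus an analyticity/countable-$\lambda$ argument to identify coefficients almost surely); your proof is more robust and general, treating all orders $m$ at once and applying verbatim to any continuous Gaussian martingale whose bracket matches the variance parameter, rather than relying on the explicit Gaussian increments. One small caveat common to both arguments: since $\hat\psi_k(0)$ involves the random stationary initial condition $\psi_k(0)$, the filtration should be enlarged to contain it at time $0$ (the paper's statement has the same slight imprecision), and your $\cF_0$-measurable prefactor $\e^{\lambda\hat\psi_k(0)}$ is then harmless because it has finite moments of all orders.
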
 
\begin{proof}
We write $H_m(\hat\psi_k(t); v_k(t)) = H_m(\hat\psi_k(t))$ in order not to 
overload the notation. For any $0\leqs s < t$, we have 
\begin{equation}
\label{expec_martingale}
\E\bigbrak{H_m(\hat\psi_k(t))\bigm|\cF_s}=\E\Bigbrak{
H_m\bigpar{\hat\psi_k(s)+(\hat\psi_k(t)-\hat\psi_k(s))}\bigm|\cF_s}.
\end{equation}
By the binomial formula~\eqref{eq:Hermite_binomial} and additivity of the 
$v_k$, 
\begin{equation}
H_m\bigpar{\hat\psi_k(s)+(\hat\psi_k(t)-\hat\psi_k(s))}=\sum_{n=0}^m 
\binom{m}{n} H_n(\hat\psi_k(s))H_{m-n}(\hat\psi_k(t)-\hat\psi_k(s))\;.
\end{equation}
We replace this expression in \eqref{expec_martingale}.  Since 
$H_n(\hat\psi_k(s))$ is $\cF_s$-measurable and 
$H_{m-n}(\hat\psi_k(t)-\hat\psi_k(s))$ is independent of $\cF_s$ we obtain 
\begin{align}
\E\bigbrak{H_m(\hat\psi_k(t))\bigm|\cF_s}
={}&\sum_{n=0}^m 
\binom{m}{n}\E\bigbrak{H_n(\hat\psi_k(s))H_{m-n}
(\hat\psi_k(t)-\hat\psi_k(s))\bigm|\cF_s}\;\\
={}&\sum_{n=0}^m 
\binom{m}{n}H_n(\hat\psi_k(s))\E\bigbrak{H_{m-n}(\hat\psi_k(t)-\hat\psi_k(s))}
\;\\
={}&H_m(\hat\psi_k(s))\;.
\end{align}
The last equality is due to the fact that $m$th Hermite polynomials are 
centred variables for $m\geqs1$ and for $m=n$, 
$H_0(\hat\psi_k(t)-\hat\psi_k(s))=1$. 
\end{proof} 

The following generalisation of the binomial 
formula~\eqref{eq:Hermite_binomial} is obtained by induction. 

\begin{lemma}[Multinomial formula for Hermite polynomials]
\label{lem:hermite_multinomial}
Let $(a_q)_{q\geqs0}$ be a sequence of real numbers in $\ell^2$. Then for any 
convergent sequence $(x_q)_{q\geqs0}$, one has 
\begin{equation}
 H_m\biggpar{\sum_{q\geqs0}x_q; \sum_{q\geqs0} a_q^2} 
 = \sum_{\abs{\hbn} = m} 
 \frac{m!}{\hbn!} \prod_{q\geqs0} H_{\hbn_q}(x_q; a_q^2)\;,
\end{equation} 
where the sum runs over all $\hbn\in\N_0^{\N_0}$ such that $\abs{\hbn} := 
\sum_{q\geqs0} \hbn_q = m$, and $\hbn! := \prod_{q\geqs0} \hbn_q!$.
\end{lemma}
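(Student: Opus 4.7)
The plan is to use the generating function identity~\eqref{eq:hermite_generating}, $G(t,x;C) = \sum_{n\geqs0}\frac{t^n}{n!}H_n(x;C) = \e^{tx - Ct^2/2}$, which gives the cleanest route. Since the exponent is linear in both $x$ and $C$, applying $G$ with $x = \sum_{q\geqs0}x_q$ and $C = \sum_{q\geqs0}a_q^2$ factorises the exponential:
\begin{equation*}
\sum_{m\geqs0}\frac{t^m}{m!}H_m\biggpar{\sum_{q\geqs0}x_q;\sum_{q\geqs0}a_q^2}
= \e^{t\sum_{q\geqs0} x_q - (t^2/2)\sum_{q\geqs0} a_q^2}
= \prod_{q\geqs0}\e^{tx_q - a_q^2 t^2/2}\;.
\end{equation*}
Expanding each factor on the right via~\eqref{eq:hermite_generating}, multiplying out, and collecting terms by total degree in $t$, the coefficient of $t^m/m!$ on the right is precisely $\sum_{\abs{\hbn}=m}\frac{m!}{\hbn!}\prod_{q\geqs0}H_{n_q}(x_q;a_q^2)$. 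Matching coefficients of $t^m$ on both sides yields the identity.

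The equivalent and more elementary route, matching the paper's wording ``is obtained by induction'', is to induct on the number $N$ of nonzero entries, using Lemma~\ref{lem:hermite_binomial} as both the base case and the inductive step. Setting $X=\sum_{q<N-1}x_q$ and $C=\sum_{q<N-1}a_q^2$, the binomial formula gives
\begin{equation*}
H_m(X+x_{N-1};C+a_{N-1}^2)
= \sum_{k=0}^m \binom{m}{k}H_{m-k}(X;C)H_k(x_{N-1};a_{N-1}^2)\;,
\end{equation*}
into which one substitutes the inductive expression for $H_{m-k}(X;C)$ as a sum over multiindices $\hbn'$ with $\abs{\hbn'}=m-k$ supported in $\set{0,\dots,N-2}$. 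Identifying $k$ with the last component $n_{N-1}$, the coefficient $\binom{m}{k}(m-k)!/\hbn'!$ collapses to $m!/\hbn!$, producing every multiindex of total mass $m$ and support in $\set{0,\dots,N-1}$ exactly once and closing the induction.

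The only technical point is the passage from the finite-support setting to the infinite sequences as stated. This is in fact very mild: the constraint $\abs{\hbn}=m$ forces $\hbn$ to have support of size at most $m$, so each term on the right-hand side involves only finitely many factors distinct from $H_0=1$. Moreover, in every application of the lemma inside the paper (notably in Lemma~\ref{binomial_formula}), the sequences $(x_q)$ and $(a_q)$ have finite support because of the spectral cut-off $q\leqs\ell_N$, so the infinite version is really just a combinatorial identity packaged for convenience. In full generality, one truncates at order $N$, applies the finite result, and passes to the limit using continuity of $H_m$ on the left and absolute convergence of the multinomial expansion on the right, which is routine given the $\ell^2$ assumption on $(a_q)$.
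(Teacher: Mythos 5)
Your argument is correct. The paper offers no written proof at all: it simply remarks that the multinomial formula \lq\lq is obtained by induction\rq\rq\ from the binomial formula of Lemma~\ref{lem:hermite_binomial}, and your second route is precisely that argument carried out in detail, with the right coefficient collapse $\binom{m}{k}\,(m-k)!/\hbn'! = m!/\hbn!$ and the correct observation that the constraint $\abs{\hbn}=m$ confines the support of $\hbn$ to at most $m$ indices, so the infinite-index statement reduces to the finitely supported case (and, in the paper's only use, Lemma~\ref{binomial_formula}, the cut-off $q\leqs\ell_N$ makes the sequences finitely supported anyway). Your first route, via the generating function~\eqref{eq:hermite_generating}, is a genuinely different and arguably cleaner derivation: the additivity of the exponent $tx-Ct^2/2$ in the pairs $(x_q,a_q^2)$ factorises $G$, and matching coefficients of $t^m$ gives the identity in one stroke; its only cost is the (mild) justification of the infinite product and of the term-by-term multiplication, which your truncation-and-limit remark, using the $\ell^2$ assumption on $(a_q)$ and convergence of $\sum_q x_q$, handles adequately. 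Either route would serve as a complete proof of the lemma.
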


Given a set $\set{\psi_q}_q$ of independent centred Gaussian random variables, 
one defines the $m$th homogeneous Wiener chaos $\cH_m$ as the vector space 
generated by all Wick powers of the $\psi_q$ of total degree $m$, that is, all 
\begin{equation}
 \Phi_{\hbn} = \prod_{q\geqs0} H_{\hbn_q}\bigpar{\psi_q; \variance(\psi_q)}
\end{equation} 
with $\abs{\hbn} = m$. Then one has the following result on equivalence of 
norms, which is a consequence of hypercontractivity of the Ornstein--Uhlenbeck 
semigroup. See for instance~\cite[Theorem~4.1]{Hypercontractivity} 
or~\cite[Theorem~1.4.1]{nualart2006malliavin}. 

\begin{lemma}[Equivalence of moments]
\label{lemma:Nelson}
Let $X$ be a random variable, belonging to the $m$-th homogeneous Wiener
chaos. Then for any $p \geqs 1$ one has
\begin{equation}
\label{eq:A-Nelson} 
 \bigexpec{X^{2p}} \leqs 
(2p-1)^{mp} \bigexpec{X^2}^p\;.
\end{equation}
\end{lemma}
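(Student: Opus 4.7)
The plan is to reduce the inequality to Nelson's hypercontractivity estimate for the Ornstein--Uhlenbeck semigroup $(T_t)_{t\geqs0}$ on the underlying Gaussian probability space. First I would record two ingredients. The first is the eigenvalue property: for every $Y\in\cH_m$ one has $T_t Y = \e^{-mt}Y$. This can be verified on generators of the chaos by applying the Mehler formula directly to $H_m(\psi;\variance(\psi))$ and using the generating function~\eqref{eq:hermite_generating}, and then extended to all of $\cH_m$ by linearity and density. The second ingredient is Nelson's hypercontractivity inequality: whenever $1 < p_1 \leqs q_1 < \infty$ satisfy $\e^{2t}(p_1-1) \geqs q_1-1$, the operator $T_t$ maps $L^{p_1}$ into $L^{q_1}$ contractively, that is $\norm{T_t Y}_{L^{q_1}}\leqs \norm{Y}_{L^{p_1}}$ for every $Y$.

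Next I would specialise both ingredients to $Y = X\in\cH_m$ with the exponent pair $p_1 = 2$ and $q_1 = 2p$ (in the notation of the lemma). The threshold condition becomes $\e^{2t}\geqs 2p-1$, which I saturate by choosing $t = \frac12\log(2p-1)$. Combining the two facts yields
\begin{equation}
 \e^{-mt}\norm{X}_{L^{2p}}
 = \norm{T_t X}_{L^{2p}}
 \leqs \norm{X}_{L^{2}}\;,
\end{equation}
so that $\norm{X}_{L^{2p}}\leqs (2p-1)^{m/2}\norm{X}_{L^2}$, and raising to the $2p$-th power gives the claim~\eqref{eq:A-Nelson}. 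For $p = 1$ the bound is trivially an equality, so only $p>1$ needs the argument above.

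The only substantive step is the hypercontractivity estimate itself, which is the main obstacle if one aims at a self-contained proof; it can be obtained either from Gross's logarithmic Sobolev inequality for the Gaussian measure (combined with the identification of the OU generator as the associated Dirichlet form) or from the two-point Bonami--Beckner inequality followed by tensorisation and a central limit passage. Since this result is classical and not specific to the present paper, I would simply invoke it via the references~\cite{Hypercontractivity} and~\cite[Thm.~1.4.1]{nualart2006malliavin} already cited in the statement, rather than reproducing its proof.
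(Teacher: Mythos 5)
Your argument is correct and coincides with the paper's own treatment: the paper gives no self-contained proof but states that the lemma is a consequence of hypercontractivity of the Ornstein--Uhlenbeck semigroup, citing \cite{Hypercontractivity} and \cite[Theorem~1.4.1]{nualart2006malliavin}, which is exactly the route you take. Your explicit derivation (the eigenvalue relation $T_tX=\e^{-mt}X$ on the $m$-th chaos combined with Nelson's contraction $L^2\to L^{2p}$ at $t=\frac12\log(2p-1)$) is the standard argument behind those references and yields precisely the stated constant $(2p-1)^{mp}$.
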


\section{Some technical proofs for Section~\ref{sec:proof_stoch_convolution}}
\label{appendix_B} 

%\label{appendix_B}

\subsection{Proof of Lemma~\ref{lemma:Xq4}}

We divide the proof of the lemma into the following two parts. 

\begin {lemma}
\label{Xq4}
For any $q_0\geqs 0$, $t\in I_l$ and $\hbn$, one has 
\begin{equation}
\label{bound_Xq2_proof}
\E\bigbrak{X_{\hbn}^2} 
= \hbn!\sum_{\substack{\kq_1,\kq_2,...,\kq_{\hbn_q}\in\cA_{q}\; 
\forall q \\ 
\sum_{q\geqs0}\sum_{i=1}^{\hbn_q}\kq_i\in\cA_{q_0}}}\prod_{q\geqs0}
\prod_{i=1}^{\hbn_q}\hat v_{\kq_i}(t)\;.
\end{equation}
\end{lemma}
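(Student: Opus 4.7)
The plan is to establish the formula by combining Parseval's identity with the Wick isometry (Lemma~\ref{lem:hermite_moments}) after carefully expanding the product of Wick powers in the Fourier basis. First, I would write
\begin{equation*}
 X_{\hbn}^2(t) = \bignorm{\delta_{q_0}\biggpar{\prod_{q\geqs0}\Wick{\delta_q\hat\psi(t,\cdot)^{\hbn_q}}}}^2_{L^2}
 = \sum_{k_0\in\cA_{q_0}} \Biggabs{\int_{\T^2} \e^{-2\pi\icx k_0\cdot x}\prod_{q\geqs0}\Wick{\delta_q\hat\psi(t,x)^{\hbn_q}}\6x}^2\;,
\end{equation*}
using that $\|\delta_{q_0}f\|_{L^2}^2 = \sum_{k\in\cA_{q_0}} |\hat f_k|^2$.

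The next step is to expand each factor. Since $\delta_q\hat\psi(t,x) = \sum_{k\in\cA_q}\hat\psi_k(t)e_k(x)$, the multilinear formulation of the Wick product gives
\begin{equation*}
 \Wick{\delta_q\hat\psi(t,x)^{\hbn_q}}
 = \sum_{k_1^{(q)},\dots,k_{\hbn_q}^{(q)}\in\cA_q}
 \Wick{\hat\psi_{k_1^{(q)}}(t)\cdots\hat\psi_{k_{\hbn_q}^{(q)}}(t)}\,
 e_{k_1^{(q)}+\cdots+k_{\hbn_q}^{(q)}}(x)\;.
\end{equation*}
Crucially, the modes $\{\hat\psi_k\}_{k\in\cA_q}$ and $\{\hat\psi_k\}_{k\in\cA_{q'}}$ are independent for $q\neq q'$, so the product over $q$ of Wick products factors into a single Wick product:
\begin{equation*}
 \prod_{q\geqs0}\Wick{\hat\psi_{k_1^{(q)}}\cdots\hat\psi_{k_{\hbn_q}^{(q)}}}
 = \Wick{\prod_{q\geqs0}\prod_{i=1}^{\hbn_q}\hat\psi_{k_i^{(q)}}}\;.
\end{equation*}
Integration against $\e^{-2\pi\icx k_0\cdot x}$ then imposes the constraint $\sum_{q,i}k_i^{(q)} = k_0$.

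Taking the expectation of $|\cdot|^2$ and applying the generalised Wick isometry from Lemma~\ref{lem:hermite_moments} (in its Isserlis/Wick-theorem form for products of two Wick monomials of equal degree), the expectation of $\Wick{\prod \hat\psi_{a_i}}\overline{\Wick{\prod\hat\psi_{b_j}}}$ is a sum over permutations $\sigma$ of the products $\prod\E[\hat\psi_{a_i}\overline{\hat\psi_{b_{\sigma(i)}}}] = \prod \delta_{a_i,b_{\sigma(i)}}\hat v_{a_i}(t)$. Because only pairings that respect the annulus of each index survive (cross-annulus covariances vanish), the admissible $\sigma$ factor as permutations within each batch of size $\hbn_q$, contributing exactly $\prod_q\hbn_q! = \hbn!$ identical terms. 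Each such pairing collapses the $b$-sum to the $a$-sum, yielding
\begin{equation*}
 \E\bigbrak{X_{\hbn}^2(t)} = \hbn!\sum_{k_0\in\cA_{q_0}}\sum_{\substack{k_i^{(q)}\in\cA_q\\ \sum_{q,i}k_i^{(q)}=k_0}}\prod_{q\geqs0}\prod_{i=1}^{\hbn_q}\hat v_{k_i^{(q)}}(t)\;,
\end{equation*}
which is the claimed identity after merging the constraint $\sum k_i^{(q)}=k_0\in\cA_{q_0}$.

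The main bookkeeping obstacle will be the careful handling of the reality/conjugation in the complex Fourier expansion and the verification that the product of Wick powers genuinely factors as a single Wick product (which rests on independence of Gaussian modes across distinct annuli). The combinatorial factor $\hbn!$ is the delicate point: it must come out as $\prod_q \hbn_q!$ and not as the full factorial $(\sum_q\hbn_q)!$, because the Kronecker constraints from the Wick pairings only allow permutations that stay within a given annulus.
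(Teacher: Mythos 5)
Your argument is correct, and it reaches the exact identity (including the factor $\hbn!=\prod_q\hbn_q!$, whose origin you rightly single out as the delicate point), but it is organised differently from the paper's proof. The paper stays at the level of the fields: after the same Parseval step, it computes the two-point function $\E\bigbrak{\varphi(t,x_1)\bar\varphi(t,x_2)}$ in physical space, factorises it over annuli by independence of the $\delta_q$-blocks, applies the stated moment identity of Lemma~\ref{lem:hermite_moments} to each block to obtain $\hbn_q!\,\E\bigbrak{\delta_q\hat\psi(t,x_1)\overline{\delta_q\hat\psi(t,x_2)}}^{\hbn_q}$, and only at the end expands these covariances in Fourier and integrates against $e_{-k}(x_1-x_2)$ to produce the momentum constraint $\sum_{q,i}\kq_i=k$. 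You instead Fourier-expand first and work at the level of the individual modes $\hat\psi_k$, which requires two ingredients beyond the lemmas collected in Appendix~B: the multilinear expansion of the Wick power $\Wick{\delta_q\hat\psi^{\hbn_q}}$ into Wick monomials of the modes, and the cross-pairing (Isserlis-type) formula for the expectation of a product of two Wick monomials of complex Gaussian modes, together with $\E\bigbrak{\hat\psi_{k}\overline{\hat\psi_{k'}}}=\delta_{k,k'}\hat v_k(t)$. These are standard facts of Wick calculus (they can be found in~\cite{Janson_book}), but if you follow this route you should state or prove them, since Lemma~\ref{lem:hermite_moments} as written only covers Wick powers of a single Gaussian variable. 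In exchange, your mode-level bookkeeping makes the momentum constraint and the combinatorics transparent: the surviving pairings must preserve annuli, which is exactly why the factor is $\prod_q\hbn_q!$ and not $(\sum_q\hbn_q)!$ — the same factor the paper extracts directly from the $n!$ in Lemma~\ref{lem:hermite_moments}.
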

\begin{proof}
Let $\varphi(t,\cdot) = \displaystyle
\prod_{q\geqs0}\Wick{\delta_q\hat\psi(t,\cdot)^{\hbn_q}}$. The $L^2$-norm of its 
projection on $\cA_{q_0}$ is given by
\begin{equation}
\bignorm{\delta_{q_0}\varphi(t,\cdot)}_{L^2}^2
=\sum_{k\in\cA_{q_0}}\abs{(P_k\varphi)(t,\cdot)}^2\;,
\end{equation}
where $(P_k\varphi)(t,x)$ is the projection of $\ph$ on the $k$th Fourier 
basis vector $e_k(x)$, given by 
\begin{equation}
(P_k\varphi)(t,x)=\int_{\T^2}e_{-k}(x_1)\varphi(t,x_1)\6x_1 e_k(x)\;.
\end{equation}
Therefore,
\begin{equation}
\E\Bigbrak{\bignorm{\delta_{q_0}\varphi(t,\cdot)}_{L^2}^2}=\sum_{k\in\cA_{q_0}}\E\Bigbrak{\abs{(P_k\varphi)(t,\cdot)}^2}.
\end{equation}
For a fixed $k\in\cA_{q_0}$, we have 
\begin{align}
\E\Bigbrak{\abs{(P_k\varphi)(t,x)}^2}={}&\E\Bigbrak{\int_{\T^2}\int_{\T^2}e_{-k}(x_1)\varphi(t,x_1)e_k(x_2)\bar\varphi(t,x_2)\6x_1\6x_2 e_k(x)e_{-k}(x)}\;\\
%={}&\int_{\T^2}\int_{\T^2}e_{-k}(x_1)e_k(x_2)\E\Bigbrak{\varphi(t,x_1)\bar\varphi(t,x_2)}\6x_1\6x_2\;\\
={}&\int_{\T^2}\int_{\T^2}e_{-k}(x_1-x_2)
\E\bigbrak{\varphi(t,x_1)\bar\varphi(t,x_2)}\6x_1\6x_2\;, 
\end{align}
where 
\begin{align}
\E\bigbrak{\varphi(t,x_1)\bar\varphi(t,x_2)}
&= \E\Bigbrak{\prod_{q\geqs0}\Wick{\delta_q\hat\psi(t,x_1)^{\hbn_q}}
\overline{\Wick{\delta_q\hat\psi(t,x_2)^{\hbn_q}}}\,} \\
&= \prod_{q\geqs0}\E\bigbrak{\Wick{
\delta_q\hat\psi(t,x_1)^{\hbn_q}}
\overline{\Wick{\delta_q\hat\psi(t,x_2)^{\hbn_q}}}\,}\;,
\end{align}
since the projections $\delta_{q}$ and $\delta_{q'}$ are independent for 
$ q\ne q'$.
By Lemma~\ref{lem:hermite_moments}, we get
\begin{equation}
\E\bigbrak{\Wick{\delta_q\hat\psi(t,x_1)^{\hbn_q}}
\overline{\Wick{\delta_q\hat\psi(t,x_2)^{\hbn_q}}}\,}
= \hbn_q!\E\bigbrak{\delta_q\hat\psi(t,x_1)
\overline{\delta_q\hat\psi(t,x_2)}\,}^{\hbn_q}\;,
\end{equation}
where
\begin{align}
\E\bigbrak{\delta_q\hat\psi(t,x_1)
\overline{\delta_q\hat\psi(t,x_2)}\,}
={}&\sum_{k_1, k_2\in\cA_q} 
\E\bigbrak{\hat\psi_{k_1}(t)\overline{\hat\psi_{k_2}(t)}}
e_{k_1}(x_1)e_{-k_2}(x_2)\;\\
={}&\sum_{k_1,k_2\in\cA_q}\hat v_{k_1}(t)
\delta_{k_1,k_2}e_{k_1}(x_1)e_{-k_2}(x_2)\;\\
={}&\sum_{k_1\in\cA_q}\hat v_{k_1}(t)e_{k_1}(x_1-x_2)\;.
\end{align}
Therefore, 
\begin{align}
\E\bigbrak{\varphi(t,x_1)\bar\varphi(t,x_2)}
={}&\prod_{q\geqs0}\hbn_q!\Bigpar{
\sum_{k_1\in\cA_q}\hat v_{k_1}(t)e_{k_1}(x_1-x_2)}^{\hbn_q}\;\\
={}&\biggpar{\prod_{q\geqs0}\hbn_q!} \prod_{q\geqs0}
\biggpar{\sum_{k_1,...,k_{\hbn_q}\in\cA_q}\hat v_{k_1}(t)\cdots 
\hat v_{k_{\hbn_q}}(t) e_{k_1+...k_{\hbn_q}}(x_1-x_2)}\;.
\end{align}
Integrating over $x_1$ and $x_2$, we get
\begin{align}
\E\Bigbrak{\abs{(P_k\varphi)(t,x)}^2}
={}&\hbn!\sum_{\kq_1,...,\kq_{\hbn_q}\in\cA_q,\ \forall 
q}\prod_{q\geqs0}\prod_{i=1}^{\hbn_q}\hat v_{\kq_i}(t)\;\\
{}&\times\int_{\T^2}\int_{\T^2}e_{-k}(x_1-x_2)e_{\sum_{q\geqs0}\kq_1+...+\kq_{\hbn_q}}(x_1-x_2)\6x_1\6x_2\;\\
={}&\hbn!\sum_{\kq_1,...,\kq_{\hbn_q}\in\cA_q,\ \forall 
q}\prod_{q\geqs0}\prod_{i=1}^{\hbn_q}\hat v_{\kq_i}(t)
\mathbbm{1}_{\bigset{\sum_{q\geqs0}\sum_{i=1}^{\hbn_q}\kq_i=k}}\;\\
={}&\hbn!\sum_{\substack{\kq_1,...,\kq_{\hbn_q}\in\cA_{q},\ \forall q \\ 
\sum_{q\geqs0}\sum_{i=1}^{\hbn_q}\kq_i=k}}\prod_{q\geqs0}\prod_{i=1}^{\hbn_q}
\hat v_{\kq_i}(t)\;.
\end{align}
Summing over $k_0\in\cA_{q_0}$ yields the claimed result. 
\end{proof}

\begin {lemma}
\label{Xq2}
There exists a numerical constant $C_0$ such that
\begin{equation}
\E\bigbrak{X_{\hbn}^2} 
\leqs C_0^m \hbn!\sigma^{2m}\frac{2^{2q_0}}{2^{2\q}}
\leqs C_0^m m!\sigma^{2m}\frac{2^{2q_0}}{2^{2\q}}\;.
\end{equation}
\end{lemma}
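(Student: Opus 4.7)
The plan is to bound the explicit formula for $\E\bigbrak{X_\hbn^2}$ provided by Lemma~\ref{Xq4} by combining a pointwise bound on the variances $\hat v_k(t)$ with a combinatorial count of the admissible wave-vector tuples. The main gain over the naive unconstrained bound comes from the annular restriction $\sum_{q,i}\kq_i\in\cA_{q_0}$, which forces one of the vectors (naturally taken in $\cA_\q$) to lie in an intersection of size $\lesssim\min(4^{q_0},4^\q)$. For the variance bound, since $\alpha_k(u_{l+1},t)\leqs 0$ on $I_l$, \eqref{eq:v_k_hat} and \eqref{var:stoch_convolution_k} give $\hat v_k(t)\leqs v_k=\sigma^2/[2(\mu_k+1)]$, which for $k\in\cA_q$ is uniformly of order $\sigma^2/(1+4^q)$. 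Hence
\begin{equation*}
\prod_{q\geqs 0}\prod_{i=1}^{\hbn_q}\hat v_{\kq_i}(t)\leqs c_0^m\sigma^{2m}\prod_{q\geqs 0}(1+4^q)^{-\hbn_q}.
\end{equation*}

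For the counting, I would start from the unconstrained bound $\prod_q|\cA_q|^{\hbn_q}\leqs c_1^m\prod_q(1+4^q)^{\hbn_q}$, then use the constraint $\sum\kq_i\in\cA_{q_0}$ to single out one vector in $\cA_\q$ (which is allowed because $\hbn_\q\geqs 1$ by the very definition of $\q$). Choosing all other vectors freely in their respective annuli, the distinguished one is forced to lie in $\cA_\q\cap(\cA_{q_0}-\tilde T)$, where $\tilde T$ is the sum of the others. Bounding this intersection by $\min(|\cA_{q_0}|,|\cA_\q|)\lesssim\min(4^{q_0},4^\q)$ yields
\begin{equation*}
\#\Bigset{(\kq_i):\kq_i\in\cA_q,\;\textstyle\sum_{q,i}\kq_i\in\cA_{q_0}}\leqs c_2^m\prod_{q\geqs 0}(1+4^q)^{\hbn_q}\cdot\frac{\min(4^{q_0},4^\q)}{1+4^\q}.
\end{equation*}

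Substituting both estimates into the formula of Lemma~\ref{Xq4}, the products $\prod_q(1+4^q)^{\pm\hbn_q}$ cancel, leaving
\begin{equation*}
\E\bigbrak{X_\hbn^2}\leqs(c_0c_1c_2)^m\hbn!\,\sigma^{2m}\cdot\frac{\min(4^{q_0},4^\q)}{1+4^\q}\leqs C_0^m\hbn!\,\sigma^{2m}\frac{2^{2q_0}}{2^{2\q}},
\end{equation*}
where the last step uses $\min(4^{q_0},4^\q)/(1+4^\q)\leqs c\cdot 4^{q_0-\q}$ uniformly in the two cases $q_0\leqs\q$ and $q_0>\q$, and the final bound $\hbn!\leqs m!$ follows from $\abs{\hbn}=m$. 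The main obstacle is the bookkeeping in the counting step: one must check that the bound remains valid in the regime $q_0>\q$, where it is loose (this regime is in fact restricted to $q_0\leqs\q+\hbn_\q$ by Remark~\ref{rem:q0}, but sharpness is unnecessary), and one must absorb into $C_0^m$ the harmless discrepancy coming from the zero mode, where $|\cA_0|=1$ rather than $\asymp 4^0$.
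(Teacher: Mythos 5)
Your argument is correct, and it reaches the bound by a mechanism that is genuinely different from the paper's. The paper also starts from the exact formula of Lemma~\ref{Xq4}, but it keeps the pointwise decay $\hat v_k(t)\lesssim \sigma^2/\norm{k}^2$, fixes $k_0\in\cA_{q_0}$, eliminates one wave vector through the constraint $\sum_{q,i}\kq_i=k_0$, bounds the corresponding factor by $1/\norm{k^{(\q)}_1}^2$ via a reverse triangle inequality, and then estimates the remaining free annulus sums of $1/\norm{k}^2$ by integral (Riemann-sum) comparisons, each contributing $\Order{1}$, while the resulting factor $\sum_{k\in\cA_{\q}}\norm{k}^{-4}$ produces the gain $2^{-2\q}$ and the sum over $k_0$ produces $2^{2q_0}$. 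You instead replace every variance by its supremum over its annulus, which turns the whole estimate into a lattice-point count, and you extract the gain from the cardinality $\min(\abs{\cA_{q_0}},\abs{\cA_{\q}})$ of the admissible set for a single distinguished vector in $\cA_{\q}$; since the variance is comparable to its supremum on each dyadic annulus, nothing is lost beyond a factor $c^m$ absorbed into $C_0^m$, and the exponents match the paper's exactly. Your route is somewhat more elementary and robust: it avoids the integral comparisons and the triangle-inequality lower bound $\norm{k^{(q_1)}_1}\geqs c\,\norm{k^{(\q)}_1}$, which in the paper requires some care in borderline configurations (e.g.\ when $q_0$ is close to $\q+\hbn_{\q}$), whereas your observation $\min(4^{q_0},4^{\q})/4^{\q}\leqs 4^{q_0-\q}$ treats the regimes $q_0\leqs\q$ and $q_0>\q$ uniformly; you also correctly handle the zero mode and conclude with $\hbn!\leqs m!$. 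What the paper's finer pointwise use of the variances could buy in principle is a sharper constant or extensions where the variation of $\hat v_k$ inside an annulus matters, but for the stated lemma the two approaches are equivalent in strength.
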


\begin{proof}
We have to evaluate the sum given by~\eqref{bound_Xq2_proof}. 
Recall that $q_1 < q_2 < \dots < \q$ denote the indices of the $[\hbn]$ nonzero 
entries of $\hbn$, and that there is a numerical constant $c_0$ such that
\begin{equation}
 \hat v_k(t) \leqs \frac{c_0 \sigma^2}{1 + \norm{k}^2}
\end{equation}
for all $t\in I_l$. 
For a fixed $k_0\in\cA_{q_0}$, we  get the bound
\begin{equation}
S_{\hbn,k_0} := 
\sum_{\substack{\kq_1,...,\kq_{\hbn_q}\in\cA_{q},\ \forall q \\ 
\sum_{q\geqs0}\sum_{i=1}^{\hbn_q}\kq_i=k_0}}\prod_{q\geqs0}\prod_{i=1}^{\hbn_q}
\hat v_{\kq_i}(t)
\leqs \sum_{\substack{\kq_1,...,\kq_{\hbn_q}\in\cA_{q},\ \forall q \\ 
\sum_{q\geqs0}\sum_{i=1}^{\hbn_q}\kq_i=k_0}}\prod_{q\geqs0}\prod_{i=1}^{\hbn_q}
\frac{c_0\sigma^2}{1+\bignorm{\kq_i}^2}\;.
% \leqs{}&\biggpar{\prod_{q\geqs0}\prod_{i=1}^{\hbn_q}c_0\sigma^2}
% \sum_{\substack{\kq_1,...,\kq_{\hbn_q}\in\cA_{q},\ \forall q \\ 
% \sum_{q\geqs0}\sum_{i=1}^{\hbn_q}\kq_i=k_0}}\prod_{q\geqs0}\prod_{i=1}^{\hbn_q}
% \frac{1}{\bignorm{\kq_i}^2}\;.
\end{equation}
Note that 
\begin{equation}
\prod_{q\geqs0}\prod_{i=1}^{\hbn_q}(c_0\sigma^2)
= \prod_{q\geqs0}(c_0\sigma^2)^{\hbn_q}
= (c_0\sigma^2)^{\sum_{q\geqs0}\hbn_q}
= (c_0\sigma^2)^{m}\;,
\end{equation}
and that we can write 
\begin{equation}
\label{kdeq}
k^{(q_1)}_1
=k_0-\sum_{j=2}^{\brak{\hbn}}k^{(q_j)}_1-\sum_{j=1}^{\brak{\hbn}}\sum_{i=2}^{
\hbn_q}k^{(q_j)}_i\;.
\end{equation}
Since $\bignorm{k^{(q_1)}_i}<\bignorm{k^{(q_2)}_i}<...<\bignorm{k^{(\q)}_i}$ and 
$\bignorm{k_0}\leqs\bignorm{k^{(\q)}_i}$, by the second triangle inequality, we 
get 
\begin{align}
\label{ineq_trian}
\biggnorm{k_0-\sum_{j=2}^{\brak{\hbn}}k^{(q_j)}_1-\sum_{j=1}^{\brak{\hbn}}
\sum_{i=2}^{\hbn_q}k^{(q_j)}_i}
\geqs{}& 
\biggabs{\bignorm{k_0}-\sum_{j=2}^{\brak{\hbn}}
\bignorm{k^{(q_j)}_1}-\sum_{j=1}^
{\brak{\hbn}}\sum_{i=2}^{\hbn_q}\bignorm{k^{(q_j)}_i}}\;\\
\geqs{}&\sum_{j=2}^{\brak{\hbn}}\bignorm{k^{(q_j)}_1} + 
\sum_{j=1}^{\brak{\hbn}}\sum_{i=2}^{\hbn_q}\bignorm{k^{(q_j)}_i}
-\bignorm{k_0}\;\\
\geqs{}& c\norm{k^{(\q)}_1}
\end{align}
for a numerical constant $c>0$. 
Replacing $k^{(q_1)}_1$ by \eqref{kdeq} and bounding its norm by 
\eqref{ineq_trian}, we obtain 
\begin{align}
S_{\hbn,k_0}
\lesssim{}&(c_0\sigma^2)^{m}\prod_{j=1}^{\brak{\hbn}}
\Biggpar{\sum_{k^{(q_j)}_1,\dots,k^{(q_j)}_{\hbn_{q_j}}\in\cA_{q_j}}
\frac{1}{\bignorm{k_i^{(q_j)}}^2}}^{\hbn_{q_j}}
\sum_{k^{(\q)}_1\in\cA_{\q}}\frac{1}{\bignorm{k^{(\q)}_1}^4}\;.
\end{align}
For a fixed $q_j$,  we view these sums as Riemann sums, and integrating using 
polar coordinates yields 
\begin{equation}
\sum_{k^{(q_j)}_1,\dots,k^{(q_j)}_{\hbn_{q_j}}\in\cA_{q_j}}
\frac{1}{\bignorm{k^{(q_j)}_i}^2} \lesssim 
\int_{2^{q_j-1}}^{2^{q_j}}\frac{r}{r^2}\6r \leqs \log(2^{q_j})-\log(2^{q_j-1})= 
\log(2)\;,
\end{equation}
and 
\begin{equation}
\sum_{k^{(\q)}_1\in\cA_{\q}}\frac{1}{\bignorm{k^{(\q)}_1}^4} \lesssim 
\int_{2^{\q-1}}^{2^{\q}}\frac{r}{r^4}\6r \lesssim\frac{1}{2^{2\q}}\;.
\end{equation}
We conclude that 
\begin{equation}
S_{\hbn,k_0}
\lesssim
(c_0\sigma^2)^m\prod_{j=1}^{\brak{\hbn}}
(c_1\log(2))^{\hbn_{q_j}}\frac{1}{2^{2\q}}
= C_0^m\sigma^{2m}\frac{1}{2^{2\q}}
\end{equation}
for some numerical constant $c_1, C_0$. The result follows again by summing 
over $k\in\cA_{q_0}$.
\end{proof}

\subsection{Proof of Lemma~\ref{somme_ngras}}

We decompose the sum~\eqref{eq:Kmq0} as 
\begin{equation}
\label{eq:Kq0_sum} 
K_m(q_0) =\sum_{\brak{\hbn}=1}^mS_m(\brak{\hbn},0)\;,
\end{equation}
where for $a\in\set{1,\dots, m}$ and $b\in\N_0$, we define 
\begin{equation}
\label{eq:def_Sm} 
 S_m(a,b)
 =\sum_{\substack{\hbn:\abs{\hbn}=m\\
 \brak{\hbn}=a, \; \q + \hbn_{\q} \geqs q_0}}
\frac{m!}{\hbn!}
\frac{\q^b}{2^{(\q-q_0)/2}}\;.
\end{equation}
We will estimate this sum by induction on $a$, for arbitrary $b\in\N_0$. For 
$a=1$, the only possible $\hbn$ are those with one component, say $q$, equal to 
$m$, and all other components equal 
to $0$. Therefore, 
\begin{equation}
 S_m(1,b) \leqs \sum_{q\geqs (q_0 - m)\vee 0} \frac{q^b}{2^{(q-q_0)/2}}
\end{equation} 
(since one must have $q\geqs0$). The sum can be computed via the 
inequality 
\begin{equation}
 \sum_{q=0}^\infty (q+1)^b z^q \leqs \frac{b!}{(1-z)^{b+1}}\;,
\end{equation} 
valid for any $z\in[0,1)$ and $b\in\N$, which follows directly from 
the definitions of the polylogarithm function and Eulerian numbers. Setting 
$1/\sqrt{2}=z$, we have 
\begin{align}
 S_m(1,b) 
 &\leqs \sum_{q\geqs (q_0 - m)\vee 0} q^b z^{q-q_0} 
 = z^{-m} \sum_{p\geqs0\vee(m-q_0)} (q_0-m+p)^b z^{p} \\
 &\leqs z^{-m} \sum_{\ell = 0}^b \binom{b}{\ell} q_0^{b-\ell} 
 \sum_{p\geqs 0} p^\ell z^p \\
 &\leqs z^{-m} \frac{b!}{1-z} \sum_{\ell = 0}^b \binom{b}{\ell} 
 \frac{q_0^{b-\ell}}{(1-z)^\ell} \\
 &= 2^{m/2} b!\, c_1(q_0 + c_1)^b\;,
\end{align}
where $c_1 = (1 - z)^{-1} = 2+\sqrt{2}$. 

Assume now that $a\geqs1$ and $[\hbn] = a + 1$. We decompose $\hbn = \hbn_1 + 
\hbn_2$, with $[\hbn_1] = a$ and $[\hbn_2] = 1$, and $\abs{\hbn_1} = m_1$,  
$\abs{\hbn_2} = m_2$ with $m_1 + m_2 = m$. We may assume that the largest 
nonzero component of $\hbn$ appears in $\hbn_1$, so that $\qq{1} = \q$ and 
$\qq{2} = q < \q$. It follows that 
\begin{align}
 S_m(a+1, b) 
 &= \sum_{m_1 + m_2 = m} \sum_{q < \q}
 \sum_{\substack{\abs{\hbn_1} = m_1 \\ [\hbn_1] = a \\ 
 \qq{1} + \hbn_{\qq{1}} \geqs q_0}}
 \frac{m!}{\hbn_1!m_2!} \frac{\qq{1}^b}{2^{(\qq{1} - q_0)/2}} \\
 &\leqs \sum_{m_1=1}^{m-1} \binom{m}{m_1} S_{m_1}(a, b+1)\;,
\end{align} 
where we have bounded the sum over $q$ by $\q = \qq{1}$. 
It is then straightforward to show by induction that 
\begin{equation}
 S_m(a, b) \leqs c_1 (\sqrt{2} + a - 1)^m (q_0 + c_1)^{a+b-1} (a+b-1)!
\end{equation} 
for all $a, b$. In particular, 
\begin{equation}
 S_m(a, 0) \leqs c_1 (\sqrt{2} + a - 1)^m (q_0 + c_1)^{a-1} (a-1)!\;.
\end{equation} 
Replacing this in~\eqref{eq:Kq0_sum} yields the result, with $c_2 = \sqrt{2} - 
1$. 
\qed

\subsection{Proof of Proposition~\ref{prop:phi_hatphi}}

Using the relation~\eqref{eq:Hermite_change_basis} between Wick polynomials 
and monomials, we get 
\begin{align}
\prod_{q\geqs0}H_{\hbn_q}(\delta_q\psi(t,\cdot); c_q) =&{} 
\prod_{q\geqs0}\biggpar{\sum_{\hbl_q = 0}^{\lfloor 
\hbn_q/2 \rfloor} 
a_{\hbn_q\hbl_q} c_q^{\hbl_q} 
\bigpar{\delta_q\psi(t,\cdot)}^{\hbn_q-2\hbl_q}}\;\\
=&{} \sum_{\hbl:\hbl\leqs\lfloor\hbn/2\rfloor}a_{\hbn\hbl} 
\prod_{\substack{q\geqs0\\\hbn_q>0}} c_q^{\hbl_q}
\bigpar{\delta_q\psi(t,\cdot)}^{\hbn_q-2\hbl_q}\;,
\end{align}
where
\begin{equation}
a_{\hbn\hbl} 
= \prod_{\substack{q\geqs0\\\hbn_q>0}}a_{\hbn_q\hbl_q} = 
\prod_{\substack{q\geqs0\\\hbn_q>0}}\frac{(-1)^{\hbl_q} 
\hbn_q!}{2^{\hbl_q} \hbl_q! (\hbn_q-2\hbl_q)!} = \frac{(-1)^{\abs{\hbl}} 
\hbn!}{2^{\abs{\hbl}} \hbl! (\hbn-2\hbl)!}\;.
\end{equation}
Recall that  
\begin{equation}
\delta_q\psi(t,x)=\sum_{k\in\cA_q}\psi_k(t)e_k(x)\;,
\end{equation}
which implies 
\begin{align}
\prod_{\substack{q\geqs0\\\hbn_q>0}} \bigpar{\delta_q\psi(t,x)}^{\hbn_q-2\hbl_q} 
=&{} 
\prod_{\substack{q\geqs0\\\hbn_q>0}}  
\biggpar{\sum_{k_1,...,k_{\hbn_q-2\hbl_q}\in\cA_q}\psi_{k_1}(t)...\psi_{k_{
\hbn_q-2\hbl_q}}(t)e_{k_1+...+k_{\hbn_q-2\hbl_q}}(x)}\;\\
=&{} \sum_{\kq_1,...,\kq_{\hbn_q-2\hbl_q}\in\cA_q\ \forall 
q}\prod_{\substack{q\geqs0\\\hbn_q>0}}\prod_{i=1}^{\hbn_q-2\hbl_q}   
\psi_{\kq_i}(t)e_{\sum_{q\geqs0\colon\hbn_q>0}\sum_{i=1}^{\hbn_q-2\hbl_q} 
\kq_i}(x)\;,
\end{align}
whose projection on the $k_0$th Fourier basis vector is given by 
\begin{align}
P_{k_0}\biggpar{\prod_{\substack{q\geqs0\\\hbn_q>0}} 
\bigpar{\delta_q\psi(t,x)}^{\hbn_q-2\hbl_q}}
=&{} 
\sum_{\cB(k_0)}\prod_{\substack{q\geqs0\\\hbn_q>0}}\prod_{i=1}^{\hbn_q-2\hbl_q} 
\psi_{\kq_i}(t)e_{k_0}(x)\;,
\end{align}
where the sum runs over all tuples 
$(\kq_1,...,\kq_{\hbn_q-2\hbl_q})_{q>0}$ 
in the set
\begin{equation}
 \cB(k_0) = 
 \biggset{\kq_1,...,\kq_{\hbn_q-2\hbl_q}\in\cA_q\ \forall 
q \colon \sum_{q\geqs0}\sum_{i=1}^{\hbn_q-2\hbl_q} 
\kq_i=k_0}\;.
\end{equation}
Similar relations hold with $\hat\psi(t,x)$. 
We now note that
\begin{equation}
\bignorm{\delta_{q_0}\bigpar{\varphi(t,\cdot) - \hat\varphi(t,\cdot)}}^2_{L^2} 
= \sum_{k_0\in\cA_{q_0}}\bigabs{\pscal{e_{k_0}}{{P_{k_0}\varphi(t,\cdot) - 
P_{k_0}\hat\varphi(t,\cdot)}}}^2\;,
\end{equation}
where 
\begin{multline}
 \pscal{e_{k_0}}{{P_{k_0}\varphi(t,\cdot) - 
P_{k_0}\hat\varphi(t,\cdot)}} \\
= \sum_{\hbl:\hbl\leqs\lfloor\hbn/2\rfloor}a_{\hbn\hbl} 
\sum_{\cB(k_0)} 
\biggbrak{\prod_{\substack{q\geqs0\\\hbn_q>0}}c_q^{\hbl_q}\prod_{i=1}^{
\hbn_q-2\hbl_q}   
\psi_{\kq_i}(t) - \prod_{\substack{q\geqs0\\\hbn_q>0}}\hat 
c_q(t)^{\hbl_q}\prod_{i=1}^{\hbn_q-2\hbl_q} 
\hat\psi_{\kq_i}(t)}\;.
\label{eq:proof_Pk0phiphihat1} 
\end{multline} 
Observe that 
\begin{align}
 \prod_{i=1}^{\hbn_q-2\hbl_q}   
\psi_{\kq_i}(t)
&= \exp\biggset{-\frac{1}{\eps}\sum_{i=1}^{ \hbn_q-2\hbl_q
}\alpha_{\kq_i}(u_{l+1},t)} \prod_{i=1}^{\hbn_q-2\hbl_q} 
\hat\psi_{\kq_i}(t)\;, \\
c_q^{\hbl_q} 
&\leqs \exp\biggset{-\frac{2}{\eps}\hbl_q\alpha_{k(q)}(u_{l+1},t)} \hat 
c_q(t)^{\hbl_q}
\end{align} 
for some $k(q) \in \cA_q$. The definition of the partition implies that 
$\abs{u_{l+1}-t}$ has order $2^{-2\q} \gamma_0\eps$, and therefore there is a 
numerical constant $c_0$ such that 
\begin{equation}
 -\frac{1}{\eps} \alpha_k(u_{l+1},t) \leqs c_0 2^{-2(\q - q)} \gamma_0
\end{equation} 
holds for all $k\in\cA_q$. Therefore, 
\begin{equation}
 -\frac{1}{\eps}\sum_{i=1}^{ \hbn_q-2\hbl_q
}\alpha_{\kq_i}(u_{l+1},t) -\frac{2}{\eps}\hbl_q\alpha_{k(q)}(u_{l+1},t)
\leqs c_0\gamma_0 \abs{\hbn_q} 2^{-2(\q - q)}\;. 
\end{equation} 
Replacing this in~\eqref{eq:proof_Pk0phiphihat1} yields 
\begin{multline}
 \bigabs{\pscal{e_{k_0}}{{P_{k_0}\varphi(t,\cdot) - 
P_{k_0}\hat\varphi(t,\cdot)}}} \\
\leqs \sum_{\hbl:\hbl\leqs\lfloor\hbn/2\rfloor}\bigabs{a_{\hbn\hbl}} 
\sum_{\cB(k_0)} 
\prod_{\substack{q\geqs0\\\hbn_q>0}}
\Bigpar{\e^{c_0\gamma_0 \abs{\hbn_q}2^{-2(\q - q)}} - 1}
{\prod_{\substack{q\geqs0\\\hbn_q>0}}
\biggpar{\hat c_q(t)^{\hbl_q}\prod_{i=1}^{\hbn_q-2\hbl_q} 
\bigabs{\hat\psi_{\kq_i}(t)}}}\;.
\label{eq:proof_Pk0phiphihat2} 
\end{multline} 
Since the exponent $c_0\gamma_0 \abs{\hbn_q}2^{-2(\q - q)}$ is bounded, we 
can write, for a numerical constant $c_1$,  
\begin{align}
\prod_{\substack{q\geqs0\\\hbn_q>0}}
\Bigpar{\e^{c_0\gamma_0 \abs{\hbn_q}2^{-2(\q - q)}} - 1}
&\leqs \prod_{\substack{q\geqs0\\\hbn_q>0}}
\Bigpar{c_1\gamma_0 \abs{\hbn_q}2^{-2(\q - q)}} \\
&\leqs \bigpar{c_1\gamma_0}^{[\hbn]}
\prod_{\substack{q\geqs0\\\hbn_q>0}} \hbn_q\;,
\end{align}
since the product of powers of $2$ is bounded by $1$ (in fact, it can even be 
bounded by $2^{-2([\hbn] - 1)}$, but this just decreases the constant $c_1$).
Since 
\begin{equation}
\bignorm{\delta_q\hat\psi(t,\cdot)}^2_{L^2} = 
\sum_{k\in\cA_q}\abs{\hat\psi_k(t)}^2,
\end{equation}
we have the rough bound 
\begin{equation}
 \abs{\hat\psi_k(t)}^2\leqs\bignorm{\delta_q\hat\psi(t,\cdot)}^2_{L^2} 
\quad\forall 
k\in\cA_q\;.
\end{equation}
Plugging the last bounds into~\eqref{eq:proof_Pk0phiphihat2}, we get  
\begin{multline}
 \bigabs{\pscal{e_{k_0}}{{P_{k_0}\varphi(t,\cdot) - 
P_{k_0}\hat\varphi(t,\cdot)}}} \\
\leqs \bigpar{c_1\gamma_0}^{[\hbn]}
\Biggpar{\prod_{\substack{q\geqs0\\\hbn_q>0}} \hbn_q}
\sum_{\hbl:\hbl\leqs\lfloor\hbn/2\rfloor}\bigabs{a_{\hbn\hbl}} 
\hat c(t)^{\abs{\hbl}}
\prod_{\substack{q\geqs0\\\hbn_q>0}}
\biggpar{\bignorm{\delta_q\hat\psi(t,\cdot)}^{\hbn_q-2\hbl_q }_{L^2}}
\# \cB(k_0)\;.
\label{eq:proof_Pk0phiphihat3} 
\end{multline} 
Finally, by counting the number of choices of the $\kq_i$, we obtain 
\begin{equation}
 \# \cB(k_0) \leqs 2^{2\q(\abs{\hbn} - 2\abs{\hbl})}\;.
\end{equation} 
This yields the claimed result, noticing that this bound is independent of 
$k_0$, so summing over all $k_0\in\cA_{q_0}$ only yields an extra factor 
$2^{2q_0}$ in the $L^2$-norm squared. 
\qed

\subsection{Proof of Lemma~\ref{lem:sum_hbn}}

We decompose the sum \label{Km(q0)} as 
\begin{equation}
\label{eq:barKq0_sum} 
\bar K_{m,b}(q_0) =\sum_{a=1}^mS_{m,m}(a, b)\;,
\end{equation}
where for $a\in\set{1,\dots, m}$ and $b\in\N_0$, we define
\begin{align}
\label{eq:def_Sm} 
 S_{m,m_0}(a,b)
 =&{}\sum_{\substack{\hbn:\abs{\hbn}=m\\ \brak{\hbn}=a, \; \q + \hbn_{\q} \geqs 
q_0}}\q^b2^{(m_0+3)\q}\exp\biggset{-\beta(m_0,q_0)2^{(\q-q_0)/m_0}}\;.
\end{align}
We will proceed similarly to the proof of Lemma~\ref{lem:sum_hbn}, and estimate 
this sum by induction on $a$, for arbitrary $b\in\N_0$. For $a=1$, the only 
possible $\hbn$ are those with one component, say $q$, equal to $m$, and all 
other components equal to $0$. Then $\q = q$, and writing $x_+ = x\vee0$ we get
\begin{align}
 S_{m,m_0}(1,b) \leqs&{} \sum_{q\geqs (q_0 - m)_+}
 q^b2^{(m_0+3)q}\exp\biggset{-\beta(m_0,q_0)2^{(q-q_0)/m_0}}\;\\
 ={}& \sum_{p\geqs(m-q_0)_+}(q_0-m+p)^b2^{(m_0+3)(p+q_0-m)}
\exp\biggset{-\beta(m_0,q_0)2^{(p-m)/m_0}}\;\\
={}&2^{(m_0+3)(q_0-m)}\sum_{p\geqs (m-q_0)_+}
(q_0-m+p)^b2^{(m_0+3)p}\exp\biggset{-\beta(m_0,q_0)2^{(p-m)/m_0} }\;.
\end{align} 
One checks that for $\beta(m_0,q_0)$ larger than a numerical constant of order 
$1$, 
the general term of this sum is decreasing in $p$. Estimating the sum by an 
integral, we get
\begin{equation}
S_{m,m_0}(1,b) \leqs 
c_1(q_0-m)^b2^{(m_0+3)(q_0-m)_+}\exp\biggset{-2^{-m/m_0}\beta(m_0,q_0)}
\end{equation}
for a numerical constant $c_1$. 
Assume now that $a\geqs1$ and $[\hbn] = a + 1$. We decompose $\hbn = \hbn_1 + 
\hbn_2$, with $[\hbn_1] = a$ and $[\hbn_2] = 1$, and $\abs{\hbn_1} = m_1$,  
$\abs{\hbn_2} = m_2$ with $m_1 + m_2 = m$. We may assume that the largest 
nonzero component of $\hbn$ appears in $\hbn_1$, so that $\qq{1} = \q$ and 
$\qq{2} = q < \q$. It follows that 
\begin{align}
 S_{m,m_0}(a+1,b) 
 &= \sum_{m_1 + m_2 = m} \sum_{q < \q}
 \sum_{\substack{\abs{\hbn_1} = m_1 \\ [\hbn_1] = a \\ 
 \qq{1} + \hbn_{\qq{1}} \geqs q_0}} \!\!
\q^b2^{(m_0+3)\q}\exp\biggset{-\beta(m_0,q_0)2^{(\q-q_0)/m_0}}\\
&\leqs \sum_{m_1 = 1}^{m-1} S_{m_1,m_0}(a,b+1) \;.
\end{align} 
where we have bounded the sum over $q$ by $\q = \qq{1}$. It is then 
straightforward to show by induction that 
\begin{equation}
 S_{m,m_0}(a, b) \leqs 
c_1 m^{a-1} q_0^{a+b-1} 2^{(m_0+3)q_0}\exp\biggset{-\beta(m_0,q_0)}
\end{equation} 
for all $a, b$. Summing over $a$ and setting $m=m_0$ yields the 
result.
\qed

%%%%%%%%%%%%%%%%%%%%%%%%%%%%%%%%%%%%%%%%%%%%%%%%%%%%%%%%%%%%%%%%%%%%%%%%%%%%%%

\bibliographystyle{plain}
{\small \bibliography{SR_ref}}

% \newpage
% \goodbreak
{\small \tableofcontents}

\vfill

\bigskip\bigskip\noindent
{\small
Institut Denis Poisson (IDP) \\ 
Universit\'e d'Orl\'eans, Universit\'e de Tours, CNRS -- UMR 7013 \\
B\^atiment de Math\'ematiques, B.P. 6759\\
45067~Orl\'eans Cedex 2, France \\
{\it E-mail addresses: }
{\tt nils.berglund@univ-orleans.fr}, 
{\tt rita.nader@univ-orleans.fr}

\end{document}